\documentclass[a4paper, 12pt]{amsart}
\usepackage[a4paper, margin=2cm]{geometry}
\usepackage{amsmath,amsfonts,amssymb,amsthm}
\usepackage{mathrsfs}
\usepackage{mathtools}
\usepackage{graphics}
\usepackage[table]{xcolor}
\usepackage{epsfig}
\usepackage{esint}
\setlength{\parindent}{0cm}
\usepackage[pagebackref=true]{hyperref}
\usepackage{enumerate,enumitem}
\usepackage{multirow}


\newcommand{\ignore}[1]{}

\newtheorem{proposition}{Proposition}

\newtheorem{contlem}{Continuity Lemma}

\newtheorem{gradcontlem}{Graded Continuity Lemma}

\newtheorem{mainthm}{Main Theorem}

\newtheorem{remark}{Remark}
\newtheorem{lemma}{Lemma}
\newtheorem{corollary}{Corollary}
\newtheorem{assumption}{Assumption}

\DeclareMathOperator{\supp}{supp}
\newcommand{\Deltap}{\Delta}
\newcommand{\nablap}{\nabla}


\newcommand{\N}{\mathbb{N}}

\newcommand{\R}{\mathbb{R}}


\newcommand{\di}{d}

\newcommand{\domain}{D}

\newcommand{\mop}{\lambda}
\newcommand{\Mop}{\mu}
\newcommand{\convker}{\rho}

\newcommand{\el}{\Lambda}

\newcommand{\aff}{\ell}



\newcommand{\projm}{\mathsf{P}}

\newcommand{\xv}{x}
\newcommand{\yv}{y}
\newcommand{\tv}{t}
\newcommand{\sv}{s}
\newcommand{\rv}{r}
\newcommand{\zv}{z}
\newcommand{\txv}{\mathbf{x}}
\newcommand{\syv}{\mathbf{y}}

\newcommand{\rzv}{\mathbf{z}}
\newcommand{\polv}{\mathsf{z}}
\newcommand{\poly}{\mathsf{z}}

\newcommand{\prim}{\tau}

\newcommand{\primo}{\prim^{(1)}}
\newcommand{\primz}{\prim^{(0)}}

\newcommand{\primm}[2]{\prim^{(#1,#2)}}

\newcommand{\na}{\mathsf{1}}

\newcommand{\llhs}{\Pi}
\newcommand{\lrhs}{\Pi^{-}}


\newcommand{\Ta}[1]{\mathsf{T}_{#1}}

\newcommand{\TT}{\mathsf{T}}
\newcommand{\Tplus}{\TT}
\newcommand{\Tminus}{\TT_{-}}

\newcommand{\Pminus}{\bar{\TT}}

\newcommand{\Aa}[1]{\mathsf{A}_{#1}}


\newcommand{\betaa}{\mathfrak{a}}

\newcommand{\betan}{{\beta_x}}




\newcommand{\set}[1]{\ensuremath{\{#1\}}}
\newcommand{\setc}[2]{\ensuremath{\{#1 :\ #2\}}}


\newcommand{\dd}{\,\text{d}}
\newcommand{\eps}{\varepsilon}

\newcommand{\id}{{\rm id}}

\newcommand{\Tc}[1]{\mop^{#1}}
\newcommand{\TMc}[1]{\Mop^{#1}}
\DeclareMathOperator{\dist}{dist}

\newcommand{\nn}{\mathsf{n}}

\newcommand{\DD}[2]{D^{(#1,#2)}}

\newcommand{\Iu}[1]{\|#1\|_{0}}

\newcommand{\Inu}[1]{\|#1\|_{1}'}

\newcommand{\HNw}[2]{[#1]_{#2}}

\newcommand{\HNNw}[2]{[#1]_{#2}'}
\newcommand{\HHN}[2]{[[#1]]_{#2}}


\usepackage{amsmath,amssymb}
\makeatletter
\newcommand{\opnorm}{\@ifstar\@opnorms\@opnorm}
\newcommand{\@opnorms}[1]{%
\ensuremath{\left|\mkern-1.5mu\left|\mkern-1.5mu\left|
#1
\right|\mkern-1.5mu\right|\mkern-1.5mu\right|}
}
\newcommand{\@opnorm}[2][]{%
\mathopen{#1|\mkern-1.5mu#1|\mkern-1.5mu#1|}
#2\mathclose{#1|\mkern-1.5mu#1|\mkern-1.5mu#1|}
}
\makeatother


\usepackage{color}
\definecolor{darkred}{rgb}{0.9,0.1,0.1}
\definecolor{darkblue}{rgb}{0,0,0.7}
\definecolor{darkgreen}{rgb}{0,0.5,0}

\definecolor{darkergreen}{rgb}{0.0, 0.5, 0.0}

\hypersetup{
pdfauthor={Felix Otto, Jonas Sauer, Scott Smith, Hendrik Weber},
pdftitle={Quasi-linear parabolic equations with singular forcing},
breaklinks=true,
colorlinks=true,
linkcolor=blue,
citecolor=blue,
urlcolor=blue,
filecolor=blue,
}
\title[A priori bounds in the full sub-critical regime]{A priori bounds for quasi-linear SPDEs in the full sub-critical regime} 
\author{Felix~Otto}
\address{Felix~Otto\newline Max Planck Institute for Mathematics in the Sciences \newline Inselstr.\@ 22, D-04103 Leipzig, Germany}
\email{felix.otto@mis.mpg.de}
\author{Jonas~Sauer}
\address{Jonas~Sauer\newline Friedrich-Schiller-Universit\"at Jena \newline Ernst-Abbe-Platz 2, 07737 Jena, Germany}
\email{jonas.sauer@uni-jena.de}
\author{Scott~Smith}
\address{Scott~Smith\newline Academy of Mathematics and System Sciences, Chinese Academy of Sciences \newline No. 55 Zhongguancun East Road, Beijing, China 100190 }
\email{ssmith@amss.ac.cn}
\author{Hendrik~Weber}
\address{Hendrik~Weber\newline Westf\"alische Wilhelms-Universit\"at M\"unster \newline Orléans-Ring 10, 48149 Münster, Germany}
\email{hendrik.weber@uni-muenster.de}

\begin{document}

\begin{abstract}
This paper is concerned with quasi-linear parabolic equations driven by an additive forcing $\xi \in C^{\alpha-2}$, in the full sub-critical regime $\alpha \in (0,1)$.
We are inspired by Hairer's regularity structures,
however we work with a more parsimonious model indexed by multi-indices rather than trees.  This allows us to capture additional symmetries which play a crucial role in our analysis.
Assuming bounds on this model, which is modified in agreement with the concept
of algebraic renormalization, we prove local \emph{a priori} estimates on solutions
to the quasi-linear equations modified by the corresponding counter terms. 
\end{abstract}

\maketitle

\tableofcontents

\section{Introduction}
In this article, we study the quasi-linear parabolic partial differential equation
\begin{align}\label{eq:PDEintro}
\partial_{\tv}u-a(u)\Deltap u=\xi,
\end{align}
where $u=u(\tv,\xv)$ for $(\tv,\xv) \in \R\times\R^{\di}$, $\Deltap=\sum_{i=1}^{\di} \partial_{\xv_i}^2$, and the coefficient field $u \mapsto a(u)$ is sufficiently smooth and uniformly elliptic.
Throughout the paper we use the shorthand notation $\txv:=(\tv,\xv)$, $\syv:=(\sv,\yv)$, and $\rzv:=(\rv,\zv)$ for space-time points.
In line with the pathwise approach to stochastic analysis of Lyons \cite{Lyo98}, the external forcing $\xi$ is deterministic and viewed as a realization of a singular noise\footnote{More precisely, we think of $\xi$ as a realization of a singular noise with a small regularization in space-time and hence make the qualitative assumption that $\xi$ is smooth throughout the paper.
Crucially, all quantitative estimates on the solution depend only on the $C^{\alpha-2}$-norm of $\xi$ and higher order analogues of the corresponding model, see Assumption \ref{sne1} below.} which a.s.\@ belongs to the (negative) parabolic H\"{o}lder space $C^{\alpha-2}$.  For $\alpha \in (0,\infty)$, the PDE \eqref{eq:PDEintro} is sub-critical in the sense of Hairer \cite{Hai14}.
A standard reference point is space-time white noise, which is included in this regime if $d=1$, but marginally fails if $d=2$.
For $\alpha>0$ the solution to \eqref{eq:PDEintro} should behave on small scales like the solution to the linear equation where $a$ is replaced by a constant, which belongs to $C^{\alpha}$ by Schauder theory.
Hence, we expect the same regularity for $u$, but the following difficulty arises: for $\alpha \in (0,1)$, there is no canonical definition of $a(u)\Delta u$ as a limit of smooth approximations.
Indeed, the usual power counting heuristic fails since $u \in C^{\alpha}$ implies $a(u) \in C^{\alpha}$ and $\Delta u \in C^{\alpha-2}$, but $\alpha+\alpha-2<0$.
More concretely, one can carry out explicit calculations with Gaussian noise to see that products of this type often require re-centering by suitable counter-terms, divergent as the smooth regularization is released.
As a result, \eqref{eq:PDEintro} is not expected to be well-posed in the traditional PDE sense and a similar re-centering will be needed for the non-linearity $a(u)\Delta u$, which amounts to adjusting the equation \eqref{eq:PDEintro} with certain counter-terms, known as a renormalization.
 
\medskip

There is now an extensive literature on renormalized stochastic PDE's following the development of regularity structures \cite{Hai13}, \cite{Hai14} and paracontrolled calculus \cite{GIP15}, the main applications of these seminal works being to semi-linear equations, see \emph{e.g.}\@ \cite{HaP15}.
The quasi-linear case was first considered in \cite{OtW19} and soon after in \cite{BDH19}, \cite{FuG19} in the case of $\alpha> \frac23$. The case $\alpha> \frac25$, which in one space dimension includes the case of space-time white noise, was investigated in \cite{Ger20}, \cite{GeH19}.\footnote{A number of aspects of this paper also work for arbitrary $\alpha>0$, but the authors did not identify the renormalized PDE in the full sub-critical regime.}
An alternative approach to this regime inspired by \cite{BDH19} appeared in \cite{bailleul2019paracontrolled}.
See also \cite{RaS20} for a treatment of the initial value problem using the methods of \cite{OtW19} (in the regime $\alpha>\frac23$).
The regime $\alpha>1$ corresponds to spatially colored noise, which has been studied in the articles \cite{hornung2019quasilinear}, \cite{kuehn2020pathwise}, and in the series of papers \cite{AgV22a}, \cite{AgV22b}, and \cite{AgV20c}.
We also mention the articles \cite{dubedat2019stochastic}, \cite{funaki2020asymptotics}, and \cite{LPSX23} where singular quasi-linear SPDE's arise naturally in some relevant physical models.
Finally, we mention the interesting recent work \cite{bruned2024quasigeneralised} which explores the quasi-linear generalized KPZ equation driven by space-time white noise, providing  sufficient conditions for global well-posedness and a large class of examples.

\medskip

In our prior work \cite{otto2018parabolic}, we developed two key analytic tools (see Section \ref{sec:jets}) which applied to arbitrary $\alpha>0$, but applied them in the more restricted regime $\alpha>\frac{1}{2}$.
In fact, in \cite{otto2018parabolic} we considered a more general problem of developing a well-posedness theory for the linear problem with rough coefficients\footnote{Extending the linear theory developed in \cite{otto2018parabolic} to arbitrary $\alpha>0$ remains an interesting and challenging open problem.}.
In the present article, we do not use linear well-posedness theory to treat the non-linear problem \eqref{eq:PDEintro}.
Instead, we shift our perspective and analyze the non-linear problem directly.
Our main result is an \emph{a priori} bound on smooth solutions to a renormalized version of \eqref{eq:PDEintro}.
We provide a framework that applies to all sub-critical regularities $\alpha>0$ and all space dimensions $\di$.
The aforementioned shift in perspective comes with the following merit.
Rather than arguing entirely within the class of modelled distributions (which would be forced upon us if we had to pass through e.g.~a contraction mapping principle), we show that any solution to the renormalized equation admits a local description under the mere assumption of local smallness of the supremum norm.

\medskip

The main inputs for our \autoref{theorem1} are two structural assumptions on the driver $\xi$ that would not hold for an arbitrary $\xi \in C^{\alpha-2}$, but are nonetheless very reasonable  for realizations of a large class of stationary space-time random fields.
On the basis of the approach introduced in this paper, the construction and the stochastic estimates of the renormalized model, which this paper takes as an input, have been carried out in \cite{LOTT21}, and we will comment on how the results of \cite{LOTT21} connect to the present work below Assumption \ref{sne1} and \ref{jc73}.
However, in the spirit of regularity structures, both papers are logically independent: based on purely 
deterministic arguments, this paper establishes uniform interior regularity estimates for the renormalized equation.
Taken together, both papers demonstrate the viability of the approach to regularity structures proposed in this paper.
In particular, both papers are 
written in such a way that they can be read independently.

\medskip

We now state these assumptions and motivate them with the theory of regularity structures.
Inspired by \cite{Hai14}, we rely on a triplet $(\Aa{},\TT,\mathsf{G})$ consisting of a space of homogeneities $\Aa{}\subset\mathbb{R}$, an abstract (linear) model space $\TT$, and a structure group $\mathsf{G}\subset{\rm Aut}(\TT)$, in the sense of Hairer \cite[Definition 3.1]{Hai16}.  
For the black box approach to semi-linear equations developed in  \cite{bruned2020renormalising, bruned2016algebraic, chandra2016analytic}, each $\tau \in \TT$ is a decorated rooted tree (or forest).  A natural attempt to merge the semi-linear machinery with the parametric rough path approach employed in \cite{OtW19}, as advocated in \cite{GeH19} and \cite{otto2018parabolic}, would be to utilize trees depending on one or more parameters.
In the present work, we proceed in a rather different way by using a much smaller vector space $\TT$, which is essentially indexed by multi-indices.  

\medskip

We motivate the form of the triplet $(\Aa{}, \TT, \mathsf{G})$ and its grading here by introducing our twist on Hairer's notion of a centered model, which we view as a parameterization of the solution manifold for a renormalized version of \eqref{eq:PDEintro}.
In fact, it is possible to motivate the algebraic objects that appear in this article, including both the hierarchy of PDE's determining the model and the action of the structure group, as arising from searching for a formal series solution to \eqref{eq:PDEintro}, as we discuss in Section \ref{sec:series} below.
In order to explain the role of multi-indices played in our analysis, we now give a slightly different motivation in line with the rough path perspective where the ensemble of all non-linearities $a$ is considered simultaneously.  
Thinking of $\xi$ as being fixed, we are interested in the analytical properties of the mapping
\begin{align*}
    a \mapsto u[a],
\end{align*}
where we denote by $u[a]$ the solution to \eqref{eq:PDEintro} with nonlinearity $a$. This gives rise to a solution manifold which has an important invariance:
\begin{align}\label{rr16}
u[a]+v = u[a(\cdot-v)] \qquad \mbox{for all } v\in\mathbb{R};
\end{align}
In words, if $u$ solves \eqref{eq:PDEintro}, $u+v$ solves (\ref{eq:PDEintro}) with $a$ replaced by its shifted version $a(\cdot-v)$. In case of a driven ODE like $\partial_t u=a(u)\xi$, this implies
that modulo additive constants, the solution manifold is parameterized by $a$.
Hence in the ODE case
\begin{align}\label{rr08}
z_k:=\frac{1}{k!}\frac{d^ka}{du^k}(0)\quad\mbox{for}\;k\in\mathbb{N}_0
\end{align}
provide a complete set of coordinates for the solution manifold modulo constants. Thinking of $u$ as $u[(\poly_k)_{k\ge 0}]$ in abstract variables $(\poly_k)_{k\ge 0}$ we obtain $u[a]$ upon choosing $\poly_k=z_k$ for all $k\ge 0$.
In our PDE case of (\ref{eq:PDEintro}), the coordinates (\ref{rr08}) are insufficient.
A natural ansatz is to enrich them by a linear jet in some fixed base point $\txv$, which as in (\ref{rr08}) we somewhat arbitrarily fix to be the origin.
We choose the jet to be $z_{x}\cdot\xv$ with
\begin{align}\label{rr09}
z_{x}:=``\nabla_{\xv}u(0)" ;
\end{align}
in view of the invariance (\ref{rr16}) we deliberately drop the constant jet $u(0)$. 
As is common in the theory of rough paths and regularity structures, we will need to re-interpret (\ref{rr09}) as a Gubinelli derivative, cf.~\eqref{jc71}; thus the quotation marks.
Since the coordinate $a_0:=z_0$ will play a slightly different role in our considerations (in contrast to the other coordinates $z_x$ and $(z_k)_{k\ge 1}$, we need arbitrary high powers in the ellipicity $z_0$ if we want to describe the solution $u$ to a finite order of precision in terms of a series expansion), we often make the distinction and write $u[\poly_x,(\poly_k)_{k\ge 1};a_0]$.
Formally, Taylor's formula suggests that the general solution $u$ can be recovered from its partial derivatives with respect to $\poly=(\poly_x,\poly_1,\poly_2,\ldots)$, which are parametrized by the countable set\footnote{Here the space $c_{00}(\N_0):=\setc{\beta:\N\to\N_0}{\supp\beta \text{ is finite}}$ is the space of all $\N_0$-valued sequences of finite support.} $\N_0^d\times c_{00}(\N_0)$ of multi-indices $\beta=(\beta_x,\beta(1),\beta(2),\ldots)$.
An algebraically convenient way to analyze objects labelled by multi-indices is via formal power series. We thus introduce the model space $\TT$ as the space of formal power series in infinitely many abstract variables, the coefficients of which are (complex) analytic functions of a single parameter $a_{0}$.  More precisely, each $\tau \in \TT$ is identified with a formal power series 
\begin{equation}
\sum_{\beta}\tau_{\beta}\polv^{\beta} , \nonumber
\end{equation}
where $\beta=(\beta_{x},\beta(1) ,\beta(2) ,\dots )$ is a multi-index, $\polv^{\beta}:=\polv_x^{\beta_{x} } \prod_{k=1}^{\infty}\polv_{k}^{\beta(k)}$, and each coefficient $\tau_{\beta}$ is a function of a single parameter $a_0$ from a disc $D_{\TT}$ in the right complex half plane containing a (real) interval $I=[\el,\el^{-1}]$ for some fixed ellipticity parameter\footnote{It turns out to be enough to consider functions of a single parameter rather than several parameters since we perform estimates directly on the non-linear problem rather than attempt to develop a theory for the linear problem with rough coefficients, as in \cite{otto2018parabolic}.} $\el\in(0,1)$.  

\medskip

Another, rather minor, difference with standard regularity structures lies in the fact that
we adopt a dual perspective.  In the setting of Hairer, the abstract model space would actually correspond to $\TT^{*}$ rather than $\TT$ and the correspondence between the abstract space of symbols in $\TT^{*}$ and the concrete space-time distributions is specified through a linear map $\Pi_{\txv}: \TT^{*} \to \mathcal{S}'(\R^{\di+1})$.  We alternatively view $\Pi_{\txv}$ as a $\TT$-valued distribution.  For notational reasons, it is convenient to distinguish functions in the local description of $u$, denoted $\llhs_{\txv}$, from distributions in the description of\footnote{More accurately, the components of $\lrhs_\txv$ provide a local description of the renormalized non-linearity $a(u)\Delta u+h(u)$.} $a(u)\Delta u$, denoted  $\lrhs_\txv$, which takes values in the slightly smaller space $\Tminus$ where polynomials are excluded, see Section \ref{sec:ms}.  The $\lrhs_\txv$ can be thought of as an enhancement of the noise, in the sense that for any base-point $\txv$, it holds $\lrhs_{\txv 0}=\xi-q_0$ where $0$ denotes the multi-index with all components being zero, and where we can allow for a constant $q_0$ to ensure that certain ensemble or space-time averages of the noise vanish. Similarly, $\llhs_\txv$ can be viewed as an enhancement of the classical polynomials, in the sense that
\begin{equation}\label{jc03b}
\projm\llhs_{\txv}(\syv)=\poly_x\cdot (\yv-\xv),
\end{equation}
where $\projm$ is the projection onto the polynomial sector, cf.~Section \ref{sec:ms}. 

\medskip

 To each multi-index $\beta$ one can associate a homogeneity $|\beta|$ which is dictated by the inherent scaling of \eqref{eq:PDEintro}, cf.\@ \eqref{model201}.
 This naturally generates a set of homogeneities $\Aa{}$ and a grading of $\TT$ in terms of subspaces $\Ta{|\beta|}$ which consist of those elements of homogeneity $|\beta|$, i.e., of $\tau \in \Tplus$ such that $\tau_{\gamma}=0$ for $| \gamma | \neq |\beta|$.
 These subspaces come with their norms.
 More specifically,  we fix a sequence of discs $\set{D_{|\beta|}}_{|\beta|<2}$, where all $D_{|\beta|}$ have the same center as $D_{\Tplus}$ and are such that
\begin{align*}
 I\subsetneq D_{|\beta|} \subsetneq D_{|\gamma|} \subsetneq D_{\Tplus} \quad \mbox{for } |\gamma|<|\beta|,
\end{align*}
and set
\begin{equation}
\|\tau\|_{\Ta{|\beta|} }:= \sup_{| \gamma |=|\beta| }\sup_{a_0\in D_{|\beta|}} |\tau_{\gamma}(a_0)|.\label{md3}
\end{equation}
 Observe that elements in $\TT$ are (complex) analytic functions\footnote{We remark that we could avoid the use of complex methods altogether by monitoring the number of derivatives with respect to $a_0$ more thoroughly in terms of (real) vector-valued $C^k$-spaces; this approach was used in \cite{otto2018parabolic}.} in $a_0$, so that in view of Cauchy's integral formula derivatives with respect to $a_0$ are conveniently estimated on a marginally larger disc by the function itself; whence the nested form of the $D_{|\beta|}$'s.

\medskip

We now turn to our first assumption on the noise.
For this we introduce an anisotropic distance. Anisotropy in the directions of time and space is due to the parabolic operator $\partial_{\tv}-\Deltap $ and its
mapping properties on the scale of H\"older spaces (\emph{i.e.}, Schauder theory), which thus imposes its intrinsic (Carnot-Carath\'eodory) metric given by
\begin{align}\label{1.11}
d(\txv,\syv)=\sqrt{|\tv-\sv|}+|\xv-\yv|.
\end{align}
We all use the shorthand-notation $\dist_\txv:=\dist(\txv,\partial\domain)$ for the parabolic distance of $\txv$ from the boundary of a domain $\domain$.
\begin{assumption}\label{sne1}
For all $\txv \in B_1(0)\subset\R^{\di+1}$, there exist smooth functions $\llhs_{\txv}: \R^{\di+1}\to\Tplus$ and $\lrhs_\txv: \R^{\di+1}\to\Tminus$ satisfying the compatibility conditions \eqref{jc03b} and\footnote{We may even allow for slightly more flexibility in identity \eqref{jc60} by demanding only that it holds up to an affine function $y \mapsto P_{\txv}(\syv)$, by which we mean $P_{\txv}(\syv)=p_0+p_1\cdot (\yv-\xv)$ for some $p_0\in \Tplus$, $p_1\in \Tplus^{\di}$. } 
\begin{align}\label{jc60}
(\partial_{\tv}-a_{0}\Delta)\llhs_{\txv\beta}&=
\begin{cases}
 \xi, & \text{if } \beta=0, \\
 \lrhs_{\txv\beta}, & \text{if } \beta \ne 0.
\end{cases}
\end{align}
Furthermore, it holds
\begin{align}
 \sup_{|\beta|<2}\sup_{\syv \neq \txv \in B_1(0)\subset\R^{\di+1}} \dist_{\txv}^{\langle \beta \rangle\alpha}d^{-|\beta|}(\syv,\txv)\|\llhs_{\txv}(\syv) \|_{\Ta{|\beta|} }<\infty, \label{model7a} 
 \end{align}
 where $\langle\beta\rangle$ is defined\footnote{We mention that $\langle\beta\rangle$ can be interpreted as the number of appearances of the noise in a tree. Moreover, for $|\beta|<2$, the number $\langle\beta\rangle$ is completely determined by $|\beta|$, cf.\@ Section \ref{sec:hom}.} in \eqref{model200}.
\end{assumption}
Assumption \ref{jc73} concerns the group $\mathsf{G}$, which is a subgroup of the linear endo-morphisms of $\Tplus$, together with a re-expansion map $\Gamma_{\syv\txv} \in \mathsf{G}$ associated to each pair of base-points $\txv, \syv \in \R^{d+1}$.  This is essentially the structure group in the language of \cite[Section 4.2]{Hai16}, with the caveat that due to our dual perspective mentioned above, the transformation $\Gamma_{\syv\txv} \in \mathsf{G}$ corresponds to the adjoint of the corresponding quantity in \cite{Hai14}.  Keeping in mind that elements of $\TT$ are essentially functions of an ellipticity parameter $a_{0}$ and the abstract variables $\polv$, it turns out that elements of $\mathsf{G}$ have an elegant formulation as differential operators in these variables. 
They lead to a parametrization of $\mathsf{G}$ by $\primz\in\TT$
and $\primo\in\TT^d$ (with $\primo_\beta=0$ unless $|\beta|>1$) by an exponential formula, see (\ref{as41}) in Subsection \ref{sec:sg}. Also see the recent work \cite{LOT21}, where $\mathsf{G}$ is shown to arise from a Hopf algebra in the context of our more parsimonious model. In Section \ref{sec:series}, we give a simple motivation for the definition of  the structure group $\Gamma_{\syv\txv}$ based on Taylor's formula and on a formal series solution to the PDE.

\medskip

\begin{assumption}\label{jc73}
For all $\txv,\syv\in B_1(0)$ there exists $\Gamma_{\syv\txv}\in\mathsf{G}$ determined by $\primz_{\syv\txv}\in\Tplus$, $\primo_{\syv\txv}\in\Tplus^d$ with $(\primo_{\syv\txv})_\beta=0$ unless $|\beta|>1$, such that
\begin{align}
&\Gamma_{\syv\txv}\llhs_{\syv} =\llhs_{\txv}- \primz_{\syv\txv}, \mbox{ in particular } \primz_{\syv\txv}\stackrel{\eqref{model7a}}{=}\llhs_{\txv}(\syv), \label{jc15} \\
&\sup_{|\beta| \in (1,2)}\sup_{\syv \neq \txv \in B_{1}(0) }\dist_{\txv}^{\langle \beta \rangle\alpha}d^{1-|\beta|}(\syv,\txv)\|\primo_{\syv\txv}\|_{\TT_{|\beta|} }<\infty  , \qquad \label{model101}.
\end{align}
Furthermore, we assume there exists $q\in \Tplus$ with $q_\beta=0$ for $\beta_x\ne 0$ such that for all $\txv \in \R^{\di+1}$
 \begin{align}\label{jc60a}
   \lrhs_{\txv}(\txv)=\xi(\txv)\na - q,
 \end{align}
 where $\na$ is the unit element in $\TT$ defined by $\na(\polv)=1$.
\end{assumption}
To measure the size of the model, we define
\begin{equation}
[\Pi]:=\sup_{m=0,1}\sup_{\langle \beta \rangle \geq 1,|\beta| \in (m,2)}\sup_{\syv \neq  \txv  \in B_{1}(0)  }\dist_{\txv}^{\langle \beta \rangle\alpha}d^{m-|\beta|}(\syv,\txv)\|\tau_{\syv\txv}^{(m)}\|_{\TT_{|\beta|} } \label{e194},
\end{equation}
which is finite provided Assumptions \ref{sne1} and \ref{jc73} hold.  We emphasize that these assumptions are well-justified by the results in \cite{LOTT21}, as we will explain more precisely at the end of Section \ref{subsec:renorm}. 
The most subtle point of our assumption is hidden in (\ref{jc60a}): 
the innocent looking $q\in\TT$ in fact
is a collection of functions of $a_0$ that determine the counter term $h$ in the renormalized
equation as we show below.
In the application, one should think of
$q$ as deterministic but divergent as the regularization (\emph{i.e.}, through
mollification of $\xi$) vanishes, while
the model $(\llhs_{\txv},\lrhs_{\txv},\Gamma_{\syv\txv})$ is random but stays bounded. 
Loosely speaking, $q$ is what has to be subtracted from $\lrhs_{\txv}$ in order for the latter
to stay bounded.\footnote{Using a re-expansion property of $\lrhs$, cf.~(\ref{jc65}), it is possible to verify that
$\lrhs_{\txv}$ is characterized by $\lrhs_{\txv}({\txv})$.}
The important structural assumption is that $q$ is independent of the base point ${\txv}$
and is not affected by adjoining polynomials, by which we mean that it does not depend on the variable $\poly_x$.
In order to be self-contained, we argue below in Section \ref{subsec:renorm} that these two structural assumptions and \eqref{jc60a} are realistic.

\medskip

We denote by $\|\cdot\|$ the supremum norm.  We use $\di$ for dimension, $\el$ for an ellipticity constant, and  $\alpha\in (\frac{2}{\nn+1},\frac{2}{\nn})$ for the H\"{o}lder exponent of the solution $u$.
A constant is said to be universal provided it depends only on $\di$, $\nn$, and $\el$.  The notation $A \lesssim B$ indicates an inequality that holds up to a universal constant.
The symbols $\vee$ and $\wedge$ indicate $\max$ and $\min$, respectively.
\begin{mainthm}\label{theorem1}
Let $\alpha \in (\frac{2}{\nn+1},\frac{2}{\nn})$ for some $\nn \in \N$, $\el>0$ and $a \in C^{\nn}(\R)$ satisfy $ \el  \leq a \leq \el^{-1}$ together with $\|a^{(k)}\| \leq \el^{-1} $ for $1 \leq k \leq \nn$.
Let $\xi$ satisfy Assumptions \ref{sne1} and \ref{jc73} for some $q\in\TT$.
There exists a universal constant $\eps>0$ and a function $h:\R\to \R$ depending only on $a$ and $q$
such that all smooth solutions $u:\R^{\di+1} \to \R$ to the renormalized PDE
\begin{equation}
\partial_{\tv}u- a(u) \Deltap u + h(u)=\xi \quad \mbox{ on } B_1(0)\subset \R^{\di+1} \label{intro3}
\end{equation}
with $\|u\|\le \eps$ satisfy for all $r\in(0,1)$ and all $\txv,\syv\in B_{1-r}(0)$ with $\txv\ne \syv$ the interior
H\"{o}lder bound
 \begin{align}\label{jc63}
 r^\alpha|u(\syv)-u(\txv)| \lesssim (\|u\|+[\Pi])(1 \vee [\Pi])d^{\alpha}(\syv,\txv).
\end{align}
\end{mainthm} 
The \autoref{theorem1} holds in the full sub-critical regime $\alpha \in (0,1)$ and provides bounds on $u$ which are independent of the possibly divergent constants hidden in the counter-term $u \mapsto h(u)$, which is local and identified explicitly, see \eqref{jc66} below.  En route to \eqref{jc63} we establish a much stronger bound in the flavor of controlled rough paths, which plays the role of a higher regularity theory in the setting of singular SPDE, see \eqref{jc72} below.  The most substantial difference with our prior work \cite{otto2018parabolic} is that we need to identify a suitable algebraic structure to support our local description of $u$, which becomes increasingly refined as the parameter $\alpha$ approaches zero.  This algebraic machinery is a central ingredient that must be combined in a rather delicate way with the analytical tools developed in \cite{otto2018parabolic}.  Our approach is self-contained and we believe our methods are quite robust, potentially adding a valuable alternative perspective even in the context of semi-linear equations.

\medskip

\medskip

The renormalization of $a(u)\Deltap u$ involves counter-terms which are products of derivatives of $u \mapsto a(u)$ with 'renormalization constants' that depend on the forcing $\xi$. It will follow from the proof of the \autoref{theorem1} that these `renormalization constants' are collected precisely in $q\in\Tplus$ appearing in Assumption \ref{jc73} through \eqref{jc60} (see also Section \ref{subsec:renorm}, where we argue why this form of renormalization is to be expected). To be more specific, we encode the products of derivatives of $a$ by introducing
\begin{align*}
da(v):=\bigg(\frac{1}{k!}\frac{d^ka}{du^k}(v) \bigg )_{k \in\N}
\end{align*}
and use the following shorthand notation: We write for
$\beta=(\beta_x,\beta')$
\begin{equation}
da(v)^{\beta'}:=\prod_{k \geq 1}\bigg (\frac{1}{k!}\frac{d^ka}{du^k}(v) \bigg )^{\beta(k)},\label{intro1}
\end{equation} 
and introduce a scaled norm of such a multi-index as follows
\begin{equation}
|\beta|_{s}:=\sum_{k \geq 1}k \beta(k).\label{intro2}
\end{equation}
We will show that the renormalization $h:\R\to\R$ appearing in the \autoref{theorem1} is given by
 \begin{align}\label{jc66}
  h(v):=\sum_{|\beta|_{s}=0}^{\nn-1} da(v)^{\beta'} q_{\beta'}(a(v)),
 \end{align}
where we recall that $q\in \TT$ depends on a variable $a_0$ by the definition of $\TT$.

Estimate \eqref{jc63} is only the lowest of a whole hierarchy of estimates resembling the controlled rough path condition in \cite[Definition 1]{Gub04}. In fact, we will
show that the functions $\llhs_{\txv}$ describe the solution close to $\txv\in\R^{\di+1}$ to any order $\kappa<2$, in the sense that for all $r\in(0,1)$ and all $\txv,\syv\in B_{1-r}(0)$ there holds
 \begin{align}\label{jc72}
 r^\kappa|u(\syv)-u(\txv)-\sum_{|\beta|<\kappa} \nu^{\betan}(\txv) da(u(\txv))^{\beta'} \llhs_{\txv\beta}(\syv;a(u(\txv)))|
 \lesssim (\|u\|+[\Pi])(1 \vee [\Pi]^{\frac{\kappa}{\alpha}})d^{\kappa}(\syv,\txv),
 \end{align}
 where the Gubinelli derivative $\nu$ is given by
 \begin{align}\label{jc71}
  \nu(\txv):=\nablap u(\txv)-\sum_{|\beta|<1}da(u(\txv))^{\beta'} \nablap\llhs_{\txv\beta}(\txv;a(u(\txv))).
 \end{align}
 Here $\llhs_{\txv\beta}(\syv;a_0)$ denotes the coefficient of $\llhs_{\txv}(\syv)\in \TT$ in front of $\poly^\beta$ evaluated at $a_0$.

\medskip

Since the first version of this work appeared on arXiv, there have been some further developments in this direction.  In particular, the paper \cite{bailleul2023regularity} studies renormalization of a class of quasi-linear SPDEs containing \eqref{eq:PDEintro}, focusing on the initial value problem for small times.  An advantage of \cite{bailleul2023regularity} is that the authors are able to provide an existence and uniqueness theory for the SPDE.  However, the result in \cite{bailleul2023regularity} is conditional on the existence and continuity properties of a suitable model.  The model involves heat kernels which are not translation invariant and have limited regularity near the initial time, which rules out a direct application of the results of \cite{chandra2016analytic}, so presently the hypotheses in \cite{bailleul2023regularity} have not been verified.  

\medskip

The approach in \cite{bailleul2023regularity} has a number of similarities with this paper and our previous work \cite{otto2018parabolic}.
The main departure from our methodology is that the `freezing in' of the quasi-linear term $a(u)\Delta u$ is performed globally, with respect to a reference function approximating the initial condition (similar to  \cite{BDH19}, which implements this with paracontrolled calculus).
This allows the authors to apply Hairer’s analytic results \cite{Hai13} to close a fixed point argument in a space of modelled distributions and then recover the counter-terms by modifying the arguments of \cite{bailleul2021renormalised}.
The choice of reference function has some collateral damage in terms of the form of counter-terms in the renormalized equation, which the authors can mitigate on a case by case basis.
The arguments in \cite{bailleul2023regularity} are an instance of the traditional bottom-up approach to singular SPDE via a tree-based model.  

\medskip

In contrast, our work introduces a new regularity structure and a top-down methodology.
In particular, we show that by indexing the local expansion more efficiently and defining the right structure group, the correct counter-terms in the PDE appear automatically.
This viewpoint is not limited to quasi-linear equations, and leads to a particularly transparent simplification in settings like KPZ or $\Phi^{4}$ where one can index the model (and corresponding local expansion) simply by powers of the coupling constant and polynomials.
Finally, we mention that it does not seem clear how to apply the approach of \cite{bailleul2023regularity} on a reference domain of a fixed size, which is the setup of our main result.
In fact, in order to achieve this in the present work, even for solutions small in the supremum norm, we require more refined estimates on the solution (closer to what is needed for global bounds), which is a key reason why we don't attempt to apply directly the results of \cite{Hai13} as in the approach of \cite{bailleul2023regularity}. 

\subsection{A Formal Series Expansion for the Solution}\label{sec:series}In this section, we motivate our algebraic objects and local expansion by a formal analysis of the PDE \eqref{eq:PDEintro} without renormalization.  The discussion below is completely formal, as it involves manipulating formal power series which are not expected to converge.  Let us freeze in the coefficients at a base point $\mathbf{x}$ and re-write \eqref{eq:PDEintro} as
\begin{equation}
(\partial_{t}-a_{0}\Delta) u =(a(u)-a_{0})\Delta u+\xi \label{q1},
\end{equation}
where $a_{0}:=a(u(\txv))$. Our goal is to find a formal series solution to \eqref{q1} of the form
\begin{equation}
u-u(\txv) =f(\txv).\hat{\Pi}_{\txv} \label{q2}.  
\end{equation}
We will demand that $\tau \in \TT \mapsto f(\txv).\tau$ is linear and has the morphism property: for $\sigma,\tau \in \TT$
$$
f(\txv).(\sigma\tau)=(f(\txv).\sigma)(f(\txv).\tau), 
$$
as well as $f(\txv).\poly_{k}=\frac{1}{k!}a^{(k)}(u(\txv))$.  Similar to $\Pi_{\mathbf{x}}$, one should think of $\hat{\Pi}_{\txv}$ as a $\TT$-valued function, the hat being used to distinguish the two since at this stage there will be no renormalization.  Our goal is to show that formally, $u$ satisfies \eqref{q1} provided that the coefficients of $\hat{\Pi}_{\mathbf{x}}$ satisfy a certain hierarchy of PDE's, see \eqref{q5} below.  In the next section, we will explain that by adjusting this hierarchy slightly via a suitable renormalization, we are led to a definition of $\Pi_{\mathbf{x}}$ which has been shown in \cite{LOTT21} to satisfy the bounds imposed in the present work.  We now turn to the calculation.   
First note that considering the left-hand side of \eqref{q1}, by linearity we clearly have that for $u$ satisfying \eqref{q2} it holds
\begin{equation}
(\partial_{t}-a_{0}\Delta ) u=f(\txv).(\partial_{t}-a_{0}\Delta)\hat{\Pi}_{\txv}.
\end{equation}
Furthermore, turning to the right-hand side of \eqref{q1} and applying $\Delta$ to \eqref{q2} together with Taylor's formula, 
\begin{align}
    (a(u)-a_{0})\Delta u&=\sum_{k \geq 1}\frac{1}{k!}a^{(k)}(u(\txv))(u-u(\txv))^{k}f(\txv).\Delta \hat{\Pi}_{\txv} \nonumber\\
    &=\sum_{k \geq 1}\big ( f(\mathbf{x}).\polv_{k} \big )\big (f(\mathbf{x}).\hat{\Pi}_{\mathbf{x}} \big )^{k}f(\txv).\Delta \hat{\Pi}_{\txv}
    =f(\txv).\sum_{k \geq 1}\poly_{k}\hat{\Pi}_{\txv}^{k}\Delta \hat{\Pi}_{\txv},\label{q7}
\end{align}
where we used the morphism property in the last step.  Note also that for $\na \in \TT$ being the constant power series with value $1$, it holds $f(\txv).\na=1$ for any $\txv\in\R^{\di+1}$.  Hence, matching the terms we see that \eqref{eq:PDEintro} holds provided that
\begin{equation}\label{jq004}
    (\partial_{t}-a_{0}\Delta) \hat{\Pi}_{\txv}=\sum_{k \geq 1}\poly_{k}\hat{\Pi}_{\txv}^{k}\Delta \hat{\Pi}_{\txv}+\xi \na.
\end{equation}
At the level of components, recalling the componentwise definition of multiplication of power series, this reads as
\begin{equation}
(\partial_{t}-a_{0}\Delta)\hat{\Pi}_{\txv\beta}=\hat{\Pi}_{\txv\beta}^{-} \nonumber,
\end{equation}
where
\begin{equation}
\hat{\Pi}_{\txv\beta}^{-}:=
\begin{cases}
\xi &\text{if} \ \beta=0 \label{q5} \\
{\displaystyle \sum_{k \geq 1}\sum_{\beta_{1}+\dots+\beta_{k+1}+e_{k}=\beta} \hat{\Pi}_{\txv\beta_{1}} \cdots \hat{\Pi}_{\txv\beta_{k}} \Delta \hat{\Pi}_{\txv\beta_{k+1}}} \ &\text{if} \ \beta \neq 0. 
\end{cases}
\end{equation}
Note that the sum in $k$ is effectively finite due to the appearance of $e_k$ in the second sum.
Here $e_{k}$ is a multi-index with $1$ in component $k$ and zero in all other components.
Note that with this definition, the identity \eqref{q7} turns into
\begin{equation}
(a(u)-a_{0})\Delta u+\xi=f(\txv).\hat{\Pi}_{\txv}^{-}.\label{q12}
\end{equation}
The similarity between \eqref{q2} and \eqref{q12} is effectively the reason why it suffices to use a single index set for both the positive and the negative model simultaneously.
\begin{remark}
A simple way to achieve the  properties of $\tau \mapsto f(\mathbf{x}).\tau$ demanded above is as follows.  Define the linear form $\tau \in \TT \mapsto f(\txv).\tau$ by evaluating the formal power series $\tau$, by setting the abstract variables $\poly$ to be (an \textit{a priori} unknown function) $\poly_x=\nu(\txv)$ and $\poly_k=\frac{1}{k!}\frac{d^ka}{du^k}(u(\txv))$. That is, writing $\tau=\tau(\poly_x,(\poly_k)_{k\ge 1};a_0)$ we introduce
\begin{align}\label{jq001}
    f(\txv).\tau:=\tau(\nu(\txv),(\frac{1}{k!}\frac{d^ka}{du^k}(u(\txv)))_{k\ge 1};a(u(\txv))).
\end{align} 
Note that this is in line with the motivation given for our model space $\TT$, cf.~\eqref{rr08} and \eqref{rr09}, where now we use a general base point $\txv$ rather than fixing the origin arbitrarily. 
We remark again that this definition is only formal, as the evaluation of a formal power series is unlikely to converge.  Ignoring this issue, we see that such a form $f(\txv)$ clearly has the morphism property $(f(\txv).\sigma)(f(\txv).\tau)=f(\txv).(\sigma\tau)$.
 We also emphasize that this reasoning shows that evaluation is the driving principle behind the definition of modelled distributions: it takes elements from the abstract model space to concrete objects.
\end{remark}

We now turn our attention to the structure group $\tau \mapsto \hat{\Gamma}_{\syv\txv}\tau$ and claim that its definition arises naturally from 
demanding that the modelled distribution $f(\mathbf{x})$ has the following (formal) covariance property:
\begin{equation}
f(\syv).\tau=f(\txv).\hat{\Gamma}_{\syv\txv}\tau \label{sd1}
\end{equation}
for all $\txv,\syv\in\R^{\di+1}$ and all $\tau\in\TT$.  Before deriving the action of $\hat{\Gamma}_{\syv\txv}$, we start by giving a motivation for \eqref{sd1} based on demanding the consistency of the local expansion \eqref{q2} across two different base-points, together with the re-expansion property \eqref{jc15}. In fact, for two points $\txv,\syv\in\R^{\di+1}$, using the relation \eqref{q2} first at $\syv$ and then at $\txv$ yields
\begin{align}\label{jq002}
    f(\syv).\hat{\Pi}_{\syv}&=u-u(\syv)=u-u(\txv)-(u(\syv)-u(\txv)) 
    =f(\txv).(\hat{\Pi}_{\txv}-\hat{\Pi}_{\txv}(\syv))=f(\txv).\hat{\Gamma}_{\syv\txv}\hat{\Pi}_{\syv},
\end{align}
where we used \eqref{jc15} in the last step.
The reader acquainted with regularity structures will notice that Hairer's definition of a modelled distribution $f$ is a quantification of the defect in \eqref{sd1} for $\txv$ and $\syv$ close together.  Since $f$ describes a formally exact solution $u$, it's reasonable to expect an identity rather than an inequality.

\medskip

We will now use \eqref{sd1} and the definition \eqref{jq001} to deduce the action of $\hat{\Gamma}_{\syv\txv}$ on $\poly_j$.  We take the time to spell this calculation out explicitly since the manipulation is generally very similar to the key ideas carried out in the proof of the \autoref{lem1}, a core point of the present work.  Applying Taylor's formula yields
\begin{align*}
f(\syv).\poly_{j}&=\frac{1}{j!}a^{(j)}(u(\syv)) =\sum_{k\geq j}\frac{1}{j!}(u(\syv)-u(\txv))^{k-j}\frac{1}{(k-j)!}a^{(k)}(u(\txv)).
\end{align*}
Inserting \eqref{q2} and using the morphism property of $f$, we deduce
\begin{align*}
f(\syv).\poly_{j}&=\sum_{k \geq j}\frac{1}{j!}(f(\txv).\hat{\Pi}_{\txv}(\syv))^{k-j}\frac{1}{(k-j)!}a^{(k)}(u(\txv) ) \\
&=\sum_{k \geq j}{\binom{k}{j}}  (f(\txv).\hat{\Pi}_{\txv}(\syv) )^{k-j}f(\txv).\poly_{k} =f(\txv).\sum_{k \geq j}{\binom{k}{j}}  \hat{\Pi}_{\txv}(\syv) ^{k-j}\poly_{k}.
\end{align*}
Since $f(\txv)$ is linear, we see that ensuring \eqref{sd1} amounts to defining
\begin{equation}
\hat{\Gamma}_{\syv\txv} \poly_{j}:=\sum_{k \geq j}{\binom{k}{j}}  \hat{\Pi}_{\txv}(\mathbf{y} )^{k-j}\poly_{k} \label{sss13}. 
\end{equation} 

\subsection{The Renormalized Model} \label{subsec:renorm}

Accepting that some form of counter-term in \eqref{eq:PDEintro} is necessary, we aim to choose it in a minimally intrusive way.
This is in line with the axiomatic approach common in the physics community,
see also \cite{LOTT21}.    In particular, we ask that it is scaling wise lower order and respects the symmetries of the SPDE and its solution manifold, which then restricts the possible functional dependence.
Hence in view of \cite{bruned2020renormalising}, the appropriate ansatz for a counter term in equation 
(\ref{eq:PDEintro}) is 
\begin{align}\label{rr01}
\partial_t u-a(u)\Delta u+ h(u) + H(u) \cdot \nabla u=\xi.
\end{align}
The ansatz (\ref{rr01}) is thus parameterized by the
nonlinearities $h$ and $H$ that one postulates to be deterministic, i.e., dependent only on the
law but not the realizations of $\xi$.
In making the ansatz (\ref{rr01}), we have in mind a $\xi$ whose law is  invariant
under shifts of space-time and implicitly 
use symmetry as follows.  As the original
nonlinear operator $u\mapsto\partial_t u-a(u)\Delta u$ does not depend explicitly on space-time, we may assume that the same is true for $h$ and $H$.
There is a further symmetry-related reduction: we demand that if the law of $\xi$ is invariant under the spatial reflection $x_i\mapsto -x_i$ for $i\in\{1,\ldots,\di\}$, then the same is true for the solution $u$. Since $u\mapsto\partial_t u-a(u)\Delta u$
commutes with spatial reflection, this requires $H\equiv 0$.  Hence
(\ref{rr01}) collapses to \eqref{intro3}. 

\medskip

In fact, there is a final, but crucial symmetry observation related to the (functional)
dependence of the function $h=h[a](u)$ of $u\in\mathbb{R}$ on the nonlinearity $a$. For this, recall that the solution manifold has the important shift invariance \eqref{rr16}.
By our principle of minimal intrusiveness regarding the symmetries of the SPDE we therefore assume the following covariance under shifts
\begin{align}\label{rr04}
h[a(\cdot+v)](u)=h[a](u+v).
\end{align}
This implies that $h$ is determined by a functional $q=h[\cdot](0)$ on the space of nonlinearities via
\begin{align}\label{rr11}
h[a](v)=q[a(\cdot+v)].
\end{align}
At least heuristically, there is a one-to-one correspondence between functionals $a\mapsto \tau[a]$ and $\mathsf{S}:=\setc{\tau\in\TT}{\tau \text{ independent of }\poly_x}$.
In particular, \eqref{rr11} can be recast for fixed $a$ as
\begin{align}\label{jq003}
    h(v)=g(v).q, \quad v\in\R,
\end{align}
where $\mathsf{S}\ni\tau\mapsto g(v).\tau$ acts via $g(v).\tau=\tau[a(\cdot+v)]$.
On the one hand, \eqref{jq003} is a non-truncated version of \eqref{jc66}, as can be seen via \eqref{rr08}, \eqref{intro1} and
\begin{align*}
    \frac{1}{k!}\frac{d^ka(\cdot+v)}{du^k}(0)=\frac{1}{k!}\frac{d^ka}{du^k}(v), \qquad \mbox{for } k\in\N_0.
\end{align*}
On the other hand, Taylor's formula gives for $\txv\in\R^{\di+1}$
\begin{align*}
\begin{split}
h(u)&=\sum_{k\ge 0} \frac{1}{k!} (u-u(\txv))^k (\partial_u^k)|_{u=u(\txv)} h=\sum_{k\ge 0}\frac{1}{k!}(u-u(\txv))^k(\partial_u^k)|_{u=u(\txv)} g.q \\
&=\sum_{k\ge 0}\frac{1}{k!}(u-u(\txv))^k g(u(\txv)).(D^{(0)})^kq,
\end{split}
\end{align*}
where the derivation $D^{(0)}$ is the infinitesimal generator of shifts on the algebra $\TT$ determined via $\partial_u g.\tau=g.D^{(0)}\tau$ for all $\tau\in\TT$.
We remark that $D^{(0)}$ is given by \eqref{as40} as shown in Lemma \ref{lem:uspace} below.
Furthermore, by \eqref{jq001} we formally obtain $g(u(\txv)).\tau=\tau[a(\cdot+u(\txv))]=f(\txv).\tau$ for $\tau\in \mathsf{S}$.
Using \eqref{q2} and the morphism property of $f(\txv)$, we therefore expect
\begin{align*}
    h(u)=\sum_{k\ge 0}\frac{1}{k!}(f(\txv).\llhs_{\txv})^k f(\txv).(D^{(0)})^kq = f(\txv).\sum_{k\ge 0}\frac{1}{k!}\llhs_{\txv}^k (D^{(0)})^kq.
\end{align*}
Following the same steps as in Section \ref{sec:series}, we see that if we demand
\begin{align*}
    (\partial_{t}-a_{0}\Delta) \llhs_{\txv}=\sum_{k \geq 1}\poly_{k}\llhs_{\txv}^{k}\Delta \llhs_{\txv} - \sum_{k\ge 0}\frac{1}{k!}\llhs_{\txv}^k (D^{(0)})^kq +\xi \na =:\lrhs_\txv.
\end{align*}
then $u$ given by \eqref{q2} is formally a solution to the renormalized equation \eqref{intro3}.
In particular, keeping in mind that $\llhs_{\txv}(\txv)=0$, we expect \eqref{jc60a}.

\medskip

Now that we have finished motivating our assumptions, we comment on the precise connection with the work \cite{LOTT21} where the model is constructed.  Note that in \cite{LOTT21}, the letter $c$ is used in place of our $q$.  In particular,
\begin{itemize}
    \item analytical dependence on the parameter $a_0$ of all relevant objects as imposed by the definition of $\TT$ is obtained in \cite{LOTT21} in Remark 2.7;
    \item \eqref{jc03b} corresponds to (2.21) in \cite{LOTT21} in the relevant case $\beta_x\in\set{0,1}$,
    \item \eqref{jc60} corresponds to (2.35) in \cite{LOTT21}, where we spell out explicitly the $(\beta=0)$-component. Indeed, the $(\beta=0)$-component of (2.18) in \cite{LOTT21} reads $\lrhs_{\txv 0}=\xi-q_0$, which is \eqref{jc60} up to a constant $q_0$ which we can allow for in view of the footnote to \eqref{jc60};
    \item \eqref{model7a} corresponds to (2.36) in \cite{LOTT21};
    \item the explicit form of elements in $\mathsf{G}$ postulated in \eqref{as41} corresponds to (2.44) in \cite{LOTT21};
    \item \eqref{jc15} corresponds to (2.61) in \cite{LOTT21};
    \item \eqref{model101} corresponds to (2.55) in \cite{LOTT21};
    \item and \eqref{jc60a} corresponds to evaluating (2.18) in \cite{LOTT21} at the base point $\txv$ and observing that the sums collapse due to (2.36) in form of $\llhs_{\txv\beta}(\txv)=0$.
\end{itemize}

\section{Model Space and Structure Group} \label{sec:model}
In this section, we introduce the algebraic framework which underlies our local expansion for the solution and is used to quantify our assumptions on the forcing $\xi$.
For a multi-index $\betaa=(\betaa_1,\ldots,\betaa_{\di})$ we use the standard notation
\begin{align*}
 \betaa!&:=\betaa_1!\cdots\betaa_{\di}!, \quad |\betaa|:=\betaa_1+\ldots+\betaa_{\di}, \quad x^{\betaa}:=x_1^{\betaa_1}\cdots x_{\di}^{\betaa_{\di}},
\end{align*}
with the convention that $0^{0}=1$.
\subsection{The Model Space}\label{sec:ms}
Recall the definition of the model space $\Tplus$ as the linear space of formal power series $\sum_\beta \tau_\beta \polv^\beta$ in the abstract variables $\polv=(\polv_x,\polv_{1},\polv_{2},\ldots ) \in \R^{\di} \times \R^{\mathbb{N} }$.  It will be important that $\Tplus$ forms an algebra with unit element $\na$ defined via $\na(\polv):=1$, and given $\sigma,\tau \in \Tplus$ the product $\sigma\tau$ is identified with its coefficients via
\begin{equation}
(\sigma\tau)_{\beta}:=\sum_{\beta_{1}+\beta_{2}=\beta}\sigma_{\beta_{1}}\tau_{\beta_{2}}\label{model40}.
\end{equation}
A special role is played by the monomials $\polv_x,\{\poly_{j}\}_{j \geq 1}$, and in addition we define $\polv_0:=a_0\na$.  It will be convenient to separate the polynomial sector $\Pminus$ of $\Tplus$ from the rest, that is we write
\begin{align*}
 \Tplus=\Pminus \oplus \Tminus
\end{align*}
with
 \begin{align*}
  \Pminus&:=\{\tau\in\Tplus:\tau_{(\betan,\beta')}=0 \quad \mbox{unless}\; \betan\ne 0,\beta'=0\}, \\
  \Tminus&:=\{\tau\in\Tplus:\tau_{(\betan,0)}=0 \quad \mbox{for all}\; \betan\ne 0\}.
 \end{align*}
We denote the projection of $\Tplus$ onto $\Pminus$ by $\projm$.  Notice that in particular $\na \in \Tminus$, which should be compared with the $\beta=0$ constraint of \eqref{jc60}, and we warn the reader that we are departing from the notational convention in \cite{Hai14}. 

\medskip

\subsection{Homogeneities}\label{sec:hom}
We now define a grading of $\TT$ by assigning a homogeneity to each elementary monomial $\polv^{\beta}$, or equivalently to each multi-index $\beta$. Specifically, recalling \eqref{intro2} we define an integer-valued function $\beta \mapsto \langle \beta \rangle$ by
\begin{equation}
\langle \beta \rangle:=|\beta|_{s}+1_{\beta_{x}=0}, \label{model200}
\end{equation}
cf.~\eqref{intro2} and use this to define the homogeneity
\begin{equation}
|\beta|:=\langle \beta \rangle \alpha + |\betan|. \label{model201}
\end{equation}
\begin{remark} \label{rem:homog}
We want to emphasize that we are not making any departures from the traditional homogeneity counting in regularity structures.  For example, for $\beta_{x} \in \N_{0}^{d}$ with $|\beta_{x}|=1$ we have $|(\beta_{x},0)|=1$ to be compatible with \eqref{jc03b} and $|0|=\alpha$ which should be compared with the $\beta=0$ constraint of \eqref{jc60}, keeping in mind $\xi \in C^{\alpha-2}$.
\end{remark}

\begin{remark}
The definition can also be motivated by a scaling argument.  Indeed, observe that the $C^{\alpha-2}$-norm of $\xi$ is invariant under the scaling $\xi(\txv)\mapsto \tilde\xi(\txv):=\mop^{\alpha-2}\xi(\tilde\txv)$, $\tilde\txv:=(\mop^{-2}\tv,\mop^{-1}\xv)$. Such scaling leaves \eqref{eq:PDEintro} invariant if we define $\tilde u(\txv):=\lambda^\alpha u(\tilde \txv)$ and $\tilde a(v)=a(\lambda^{-\alpha} v)$. In view of \eqref{rr08} and \eqref{rr09}, this leads to the rescaled coordinates $\tilde\polv:=(\mop^{\alpha-1}\polv_x, \mop^{-\alpha}\polv_1, \mop^{-2\alpha}\polv_2,\ldots)$. Thus, for the partial derivatives $\tilde\llhs_\beta$ of $\tilde u$ with respect to $\tilde\poly$ we have
\begin{align*}
\tilde\llhs_\beta(\txv)=\partial_{\tilde\polv}^\beta|_{\tilde\polv=0} \tilde u(\txv) = \mop^{\alpha + \alpha\sum_{k\ge 1}k\beta(k) + (1-\alpha)|\beta_x|}\partial_\polv^\beta|_{\polv=0} u(\tilde\txv) = \mop^{|\beta|} \llhs_\beta(\tilde\txv),
\end{align*}
which is exactly corresponding to \eqref{model200} in the relevant case $|\beta_x|=0,1$.
\end{remark}
This assignment of homogeneities naturally generates a finite set of homogeneities 
$$
\Aa{}:=\big \{ |\beta|<2 \big \} \subset  \N_{0}\alpha+\N_{0}.
$$ 
The reader should keep in mind that the least element of $\Aa{}$ is $\alpha$, the homogeneity of the multi-index $\beta=0$, cf.~Remark \ref{rem:homog}.  A distinguished role will also be played by $ \lceil \alpha^{-1} \rceil \alpha$,
the least homogeneity in $\Aa{}$ larger than 1.   Moreover we note that if $\beta$ and $\gamma$ are such that $|\beta|=|\gamma|\in\Aa{}$, then the choice of $\alpha$ implies that $|\beta|_s=|\gamma|_s$, $\langle\beta\rangle=\langle\gamma\rangle$, and $\beta_{x}=\gamma_{x}$. The notion of homogeneity leads to a grading on $\TT$ as follows: for each $\delta \in \Aa{}$ we define
\begin{align*}
\TT_{\delta}&:=\{\tau \in \TT : \, \, \tau_{\beta}=0 \quad \text{if} \quad |\beta| \neq \delta \}, \quad  \quad 
\TT_{\geq \delta }:=\bigcup_{|\beta| \geq \delta}\TT_{|\beta|},
\end{align*}
and analogously $\TT_{>\delta}$.  It will be important to keep in mind how the multiplication of power series in the sense of \eqref{model40} interacts with the grading resulting from the definition \eqref{model201}.  The reader should be careful to note that although $|\beta_{1}+\beta_{2}|_{s}=|\beta_{1}|_{s}+|\beta_{2}|_{s}$, in most cases of interest in this article, $|\beta_{1}+\beta_{2}| \neq |\beta_{1}|+|\beta_{2}|$.  This is easily seen to be false by considering $\beta_{1}=\beta_{2}=0$.  For the typical cases that interest us, the two sides differ by $\alpha$, and for convenience we record this in the following lemma.
\begin{lemma}\label{lem_hom}
Let $\beta_{1},\beta_{2}$ be multi-indices.
\begin{enumerate}
\item The following identity holds:
\begin{equation} \label{jc28}
|\beta_{1}+\beta_{2}|=
\begin{cases}
|\beta_{1}|+|\beta_{2}|-\alpha \quad  &\text{if} \quad |\beta_{1,x}|\cdot |\beta_{2,x}|=0   . \\
|\beta_{1}|+|\beta_{2}| \quad &\text{else}
\end{cases}
\end{equation}
\item The following implication holds:
\begin{align}\label{prodHomog}
 \left.\begin{array}{rl}
        \sigma &\in \Ta{\ge |\beta_{1}|} \\
        \tau  &\in \Ta{ \ge |\beta_{2}|}
       \end{array}\right\}
       \Rightarrow \sigma \tau \in \Ta{ \ge |\beta_{1}|+|\beta_{2}|-\alpha} \subset \Ta{\ge |\beta_{1}|\vee |\beta_{2}|}.
\end{align}
Moreover, if $|\beta_{1,x}|\cdot |\beta_{2,x}|=0$, then
\begin{align}\label{prodHomog2}
 \left.\begin{array}{rl}
        \sigma &\in \Ta{|\beta_{1}|} \\
        \tau  &\in \Ta{|\beta_{2}|}
       \end{array}\right\}
       \Rightarrow \sigma \tau \in \Ta{|\beta_{1}|+|\beta_{2}|-\alpha}.
\end{align}
\end{enumerate}
\end{lemma}
\begin{proof}
To see the identity \eqref{jc28}, notice that
\begin{align}
|\beta_{1}+\beta_{2}|&=\alpha \big ( |\beta_{1}|_{s}+|\beta_{2}|_{s} +1_{\beta_{1,x}+\beta_{2,x}=0 } \big )+|\beta_{1,x}|+|\beta_{2,x}| \nonumber \\
&=|\beta_{1}|+|\beta_{2}|+\alpha \big (1_{\beta_{1,x}+\beta_{2,x}=0 } -1_{\beta_{1,x}=0 }-1_{\beta_{2,x}=0 }\big ) \nonumber.
\end{align}
Hence, if at least one of $\beta_{1,x},\beta_{2,x}$ is zero, then $|\beta_{1}+\beta_{2}|=|\beta_{1}|+|\beta_{2}|-\alpha$ and otherwise all indicator functions above vanish and $|\beta_{1}+\beta_{2}|=|\beta_{1}|+|\beta_{2}|$.  This establishes \eqref{jc28}.  
We now turn to the implication \eqref{prodHomog} and suppose $\sigma \in \TT_{ \geq |\beta_{1}|}$, $\tau \in \TT_{ \geq |\beta_{2}| }$.  We now argue that $(\sigma \tau)_{\gamma}=0$ if $|\gamma|<|\beta_{1}|+|\beta_{2}|-\alpha$.  Indeed, keeping in mind \eqref{model40}, if $\gamma_{1}+\gamma_{2}=\gamma$, then by \eqref{jc28} it holds $|\gamma_{1}|+|\gamma_{2}| \leq |\gamma|+\alpha<|\beta_{1}|+|\beta_{2}|$.  Hence, $|\gamma_{i}|<|\beta_{i}|$ for at least one of $i=1,2$, which implies that $\sigma_{\gamma_{1}}\tau_{\gamma_{2}}=0$, yielding the claim.  The inclusion $\Ta{ \ge |\beta_{1}|+|\beta_{2}|-\alpha} \subset \Ta{\ge |\beta_{1}|\vee |\beta_{2}|}$ follows immediately since all homogeneities in $\Aa{}$ are at least $\alpha$.
To show \eqref{prodHomog2}, let $\sigma \in \TT_{ |\beta_{1}|}$, $\tau \in \TT_{ |\beta_{2}| }$ and consider $(\sigma\tau)_\gamma$, where $\gamma=\gamma_{1}+\gamma_{2}$.  Then $(\sigma \tau)_{\gamma}=0$ unless $|\gamma_1|=|\beta_1|$ and $|\gamma_2|=|\beta_2|$, and hence $|\gamma_{1,x}|\cdot|\gamma_{2,x}|=|\beta_{1,x}|\cdot |\beta_{2,x}|=0$.
Thus, by \eqref{jc28} it holds $|\gamma|=|\gamma_1|+|\gamma_2|-\alpha=|\beta_1|+|\beta_2|-\alpha$.
\end{proof}

\medskip

\subsection{The Structure Group}\label{sec:sg}
We now define a subgroup $\mathsf{G}$ of the linear endo-morphisms $\Gamma$ of $\Tplus$. Namely, each $\Gamma$ is required to be of the ``exponential" form
\begin{align}\label{as41}
\Gamma=\sum_{k,|\betaa|\ge 0}\frac{1}{k!\betaa!} \primm{k}{\betaa} \DD{k}{\betaa},
\end{align}
where $(\primz,\primo) \in \Tplus\times\Tplus^{\di}$.\footnote{At this stage, $\tau^{(0)},\tau^{(1)}$ are arbitrary since we are describing a generic group element.  Given base-points $\txv,\syv$ we write $\tau^{(0)}_{\syv\txv}$,$\tau^{(1)}_{\syv\txv}$ for the specific choices leading to the group element $\Gamma_{\syv\txv}$ described in Assumption 2.}
Here, we have used the notation
\begin{align}\label{jc38}
 \primm{k}{\betaa}:=(\primz)^{k}(\primo)^{\betaa}, \quad \mbox{and} \quad \DD{k}{\betaa}:=(D^{(0)})^k(D^{(1)})^\betaa,
\end{align}
with the linear operators
$D^{(0)}$ and $D^{(1)}$ given by
\begin{align}\label{as40}
D^{(0)}:=\polv_1\partial_{a_0}+\sum_{k=1}^\infty(k+1)\polv_{k+1}\partial_{\polv_k},
\quad D^{(1)}:=\nablap_{\polv_{x}}.
\end{align}
These are ``derivations'' in the sense that they satisfy 
\begin{align}\label{as28}
D(\tau\sigma)=(D\tau)\sigma+\tau(D\sigma)\;\mbox{for all}\;\tau,\sigma\in\Tplus,
\end{align}
in particular $D\mathsf{1}=0$.
It will be convenient to record their value on the linear monomials (recall $\polv_0=a_0\na$):
\begin{align}
\begin{cases} 
D^{(0)}\poly_{j}=(j+1)\poly_{j+1} &\mbox{for}\;j\in\N_0, \\
D^{(0)}\poly_{x}=0,
\end{cases}
\label{as30}
\end{align}
and for $0\ne\betaa\in\N_0^{\di}$
\begin{align}
\begin{cases}
(D^{(1)})^{\betaa}\poly_{j}=0 &\mbox{if}\;j\in\N_0, \\
(D^{(1)})^{\betaa}\poly_{x}^\betaa=\betaa!\na.
\end{cases}
\label{as37}
\end{align}
In particular, if we introduce for $\tau=(\tau^1,\ldots,\tau^{\di})\in\Tplus^{\di}$ the notation $\Gamma \tau:=(\Gamma\tau^1,\ldots,\Gamma\tau^{\di})\in\Tplus^{\di}$, then we have
\begin{align}
\Gamma\poly_{j}&=\sum_{k \geq j} \binom{k}{j}(\primz)^{k-j}\poly_{k}, \qquad \mbox{for}\; j\ge 0, \label{model80} \\
\Gamma\poly_{x}&=\poly_{x}+\primo.\label{model81}
\end{align}
By a short calculation using the binomial formula and \eqref{as28}, it follows that $\Gamma$ is
an algebra morphism, that is for $\sigma, \tau\in\Tplus$ we have
\begin{align}
 \Gamma(\sigma \tau)=(\Gamma\sigma)(\Gamma\tau), \quad \Gamma\na=\na. \label{sFin2}
\end{align}
\begin{remark}
The reader might wonder where the definition \eqref{as41} based on \eqref{as40} comes from.  We first note that the relation \eqref{model80} is consistent with our heuristic expectation \eqref{sss13}.  In fact, we could equivalently define $\Gamma$ by starting with \eqref{model80}-\eqref{model81} and extending to the rest of $\TT$ by demanding \eqref{sFin2}.  However, the rather explicit expression \eqref{as40} is useful in the proof of the \autoref{lem1}, where the starting point is a general $\tau \in \TT$.  To motivate the $D^{(0)}$ operator, note that we could alternatively define it by starting with \eqref{as30} and extending to a general $\tau$ by demanding \eqref{as28}.  Furthermore, \eqref{as30} has a simple interpretation: thinking of $\poly_{j}$ as a placeholder for $\frac{1}{j!}a^{(j)}(u)$, cf.~\eqref{rr08}, the $D^{(0)}$ operator simply corresponds to differentiation in $u$.  This will be important in the proof of Corollary \ref{cor:taylor_identity}.
\end{remark}

We mention that the set $\mathsf{G}$ of all $\Gamma$ given in the form \eqref{as41}, where $(\primz,\primo)$ runs through $\Tplus\times \Tplus^{\di}$, forms a subgroup of the endomorphisms on $\TT$. In particular, every $\Gamma\in \mathsf{G}$ is invertible. In the present work, we do not need these properties and hence refer the reader\footnote{We mention for the convenience of the reader two basic transformation rules: if $(\primz,\primo)$ generates $\Gamma$, then $(-\Gamma^{-1}\primz,-\Gamma^{-1}\primo)$ generates $\Gamma^{-1}$. If additionally $({\primz}',{\primo}')$ generates $\Gamma'$, then $(\primz+\Gamma{\primz}',\primo+\Gamma{\primo}')$ generates $\Gamma\Gamma'$, cf.~Proposition 5.1(iii) in \cite{LOT21}.} to \cite{LOT21}, where the group structure is established in a more general situation.

\medskip

Since the coefficients $\tau_\beta$ are analytic in $a_0$, we may estimate higher derivatives with respect to $a_0$ on $D_{|\beta|}$ by lower ones on a larger set, and hence it follows from the definition of the operators $D^{(0)}$ and $D^{(1)}$ in \eqref{as40} and \eqref{jc38}, 
and from the nestedness of the discs $D_{|\beta|}$ that for $|\gamma|\in \Aa{}$, $k\in\N_0$ and $|\betaa|\le 1$ with $|\gamma|+\mu(k,\betaa) \in \Aa{}$, we have
\begin{align}\label{jc36}
 \DD{k}{\betaa}:\TT_{|\gamma|}\to \TT_{|\gamma|+\mu(k,\betaa)}, \quad \mu(k,\betaa):=k\alpha+|\betaa|(\alpha-1), \quad \|\DD{k}{\betaa}\tau\|_{|\gamma|+\mu(k,\betaa)}\lesssim \|\tau\|_{|\gamma|},
\end{align}
where the implicit constant is universal (indeed, it does not depend on the specific $k\in \N_0$ since there are only finitely many $k$ that fulfill the proviso).

\subsection{Linear forms on $(u,\nu)$ space}
We define a family of linear forms parametrized by $\R \times \R^{d}$ which will be used to quantify our estimates on the solution $u$, cf.~\eqref{js100}.  Namely, for each $(u,\nu) \in \R \times \R^{d}$ we define
\begin{equation}
g(u,\nu).\tau:=\sum_{\beta }\nu^{\betan}da(u)^{\beta'}\tau_{\beta}(a(u) ), \label{md0g}
\end{equation}
for all $\tau \in \TT$ which have at most finitely many non-zero coefficients $\tau_{\beta}$. We will often omit the dependence of $g$ on $(u,\nu)$ and simply use the shorthand notation $g.\tau$.  We will make extensive use of the fact that $\tau \mapsto g.\tau$ is an algebra morphism, that is
\begin{equation}
g.(\sigma \tau) =\big ( g.\sigma \big )\big ( g.\tau \big )\label{morphismP}, 
\end{equation}
which can be seen from recalling \eqref{model40} and writing
\begin{align}
g.(\sigma \tau)&=\sum_{\beta}\nu^{\betan}da(u)^{\beta'}\sum_{\beta_{1}+\beta_{2}=\beta} \sigma_{\beta_{1}}(a(u) )\tau_{\beta_{2}}(a(u) )\nonumber \\
&=\sum_{\beta_{1},\beta_{2}}\nu^{\beta_{1,x}}\nu^{\beta_{2,x}}da(u)^{\beta_{1}'}da(u)^{\beta_{2}'}\sigma_{\beta_{1}}(a(u) )\tau_{\beta_{2}}(a(u) )=\big ( g.\sigma \big )\big ( g.\tau \big ). \nonumber
\end{align}
A further property of $g$ is the interaction between differentiation in $(u,\nu)$ space and application of the operators $\DD{k}{\betaa}$ used in the definition of $\mathsf{G}$, cf.~\eqref{jc38}.
\begin{lemma}\label{lem:uspace}
For all $k \geq 0$ and $\betaa \in \N_{0}^{d}$ it holds
\begin{align}
\partial_u^{k}\partial_\nu^{\betaa} ( g.\tau)=g.\DD{k}{\betaa}\tau.\label{snew1}
\end{align}
\end{lemma}
\begin{proof}
Note that it suffices to show \eqref{snew1} for the special cases $k=1,\betaa=0$ and $k=0,|\betaa|=1$ which read
\begin{align}
\partial_u ( g .\tau)=g.D^{(0)} \tau, \quad \partial_{\nu}^\betaa ( g .\tau)=g.(D^{(1)})^{\betaa} \tau \quad \mbox{for} \; |\betaa|=1. \label{snew2}
\end{align}
The general case then follows by iteration.
To establish \eqref{snew2} we start with monomials and then use the morphism property \eqref{morphismP} to extend to a general $\tau$.
Indeed, note that for monomials $\poly_{x}$ and $\poly_{j}$, $j\ge 0$, we find 
\begin{align*}
\partial_u (g.\poly_{j})&=\frac{a^{(j+1)}(u)}{j!} =(j+1) g.\poly_{j+1} \stackrel{\eqref{as30}}{=}g.D^{(0) }\poly_{j}, \\
\partial_u (g.\poly_{x})&=\partial_u\nu=0 \stackrel{\eqref{as30}}{=}g.D^{(0) }\poly_{x},
\end{align*}
as well as
\begin{align*}
\partial_\nu^{\betaa} (g.\poly_{j})&=\partial_\nu^{\betaa}\frac{a^{(j)}(u)}{j!} =0\stackrel{\eqref{as37}}{=}g.(D^{(1) })^{\betaa}\poly_{j}, \\
\partial_\nu^{\betaa} (g.\poly_{x}^{\betaa})&=\partial_\nu^{\betaa}\nu^\betaa=1 \stackrel{\eqref{as37}}{=}g.(D^{(1) })^{\betaa}\poly_{x}^{\betaa}.
\end{align*}
Since $D^{(0)}$ is a derivation, cf.\@ \eqref{as28}, and since $\tau \mapsto g.\tau$ is a morphism, if \eqref{snew2} holds for given $\tau, \tau'$, applying the product rule gives
\begin{align*}
\partial_u\big ( g.\tau \tau' \big ) &=\partial_u\big ( g.\tau g.\tau' \big )=\big (\partial_u g.\tau\big )\big ( g.\tau' \big ) + \big ( g.\tau\big ) \big (\partial_u g.\tau' \big )\\
&\stackrel{\eqref{snew2} }{=} \big(g.D^{(0)} \tau\big) \big( g.\tau'\big) + \big(g.\tau\big) \big( g.D^{(0)}\tau'\big)\\
&= g.\big((D^{(0)}\tau)\tau' + \tau(D^{(0)}\tau')\big)\stackrel{ \eqref{as28} }{=}g.D^{(0)} (\tau \tau').
\end{align*} 
Similarly, $\partial_\nu^{\betaa} ( g.\tau \tau')=g.(D^{(1)})^\betaa (\tau \tau')$ for $|\betaa|=1$. This shows \eqref{snew2} for all $\tau\in\TT_{|\gamma|}$ that are polynomial in $a_0$, and hence by density for all $\tau\in\TT_{|\gamma|}$.
\end{proof}

\subsection{Projections}
For each $\beta$ we define the projection $P_\beta:\TT\to \Ta{|\beta|}$ via
\begin{align*}
 \tau=\sum_{\beta} \tau_{\beta}\poly^{\beta} \mapsto P_{\beta}\tau:=\tau_{\beta}\poly^{\beta},    
\end{align*}
 and for each $\eta>0$, we define the projection $Q_{\eta}:=\sum_{|\beta|<\eta}P_\beta: \TT \mapsto \TT_{<\eta}$, i.e. 
\begin{equation}
\tau=\sum_{\beta} \tau_{\beta}\poly^{\beta} \mapsto Q_{\eta}\tau:=\sum_{|\beta|<\eta}\tau_{\beta}\poly^{\beta} \label{sFin3}.
\end{equation}
We will need a variation of the exponential formula \eqref{as41} for the composition of a group element with a projection onto homogeneities below a given level.  This will be employed in the proof of Corollary \ref{cor:taylor_identity}, which is the starting point for the proof of the Graded Continuity Lemma.  In preparation for truncating the infinite summation in \eqref{as41}, it is convenient to introduce the following notation: for $|\betaa|=0,1$ and $\kappa \in \R$
\begin{equation}
 K(\kappa,|\betaa|):=\lceil \alpha^{-1}(\kappa+|\betaa|) \rceil -\lceil \alpha^{-1}|\betaa| \rceil-1. \label{e187}
\end{equation}
Throughout the article, we will often use the abbreviation $K(|\betaa|)$.
\begin{lemma}\label{lem:truncatedGamma}
Let $\eta>0$ and let $\Gamma \in \mathsf{G}$ be associated with $(\primz,\primo) \in \TT \times (\TT_{>1})^{d}$. 
For each $\tau \in \TT_{|\gamma|}$ with $|\gamma| \in \Aa{}$ and $|\gamma|<\eta$ it holds
\begin{align}\label{truncatedGamma}
Q_{\eta}\Gamma\tau
&=Q_{\eta}\sum_{|\betaa|=0,1}\sum_{k=0}^{K(\eta-|\gamma|,|\betaa|)}\frac{1}{k!\betaa!} \primm{k}{\betaa} \DD{k}{\betaa}\tau,
\end{align}
cf.~\eqref{e187}.
\end{lemma}
\begin{proof}
We apply $Q_{\eta}$ on both sides of \eqref{as41} and our goal is to truncate the summation in $k,|\betaa|$.
Observe that for $\tau \in \TT_{|\gamma|}$ with $|\gamma| \in \Aa{}$ we have $|\gamma_{x}| \in \{0,1\}$ and therefore $\DD{k}{\betaa}\tau=0$ if $|\betaa| \geq 2$, cf.~\eqref{as40}.  Note that due to the structure of the set of homogeneities, $\tau^{(1)} \in \TT_{>1}^d$ implies $\tau^{(1)} \in \TT_{ \geq  \lceil \alpha^{-1} \rceil \alpha  }^d$, which together with $(\tau^{(0)})^{k} \in \TT_{\geq \alpha}$ implies via \eqref{prodHomog} that  $\primm{k}{\betaa} \in \TT_{ \geq (1+|\betaa|\lfloor \alpha^{-1} \rfloor)\alpha}$.
 Combining this with $\DD{k}{\betaa}\tau \in \TT_{|\gamma|+k\alpha+|\betaa|(\alpha-1) }$, cf.~\eqref{jc36}, \eqref{prodHomog} yields that $\primm{k}{\betaa} \DD{k}{\betaa}\tau \in \TT_{\ge|\gamma|+k\alpha+|\betaa|( \lceil \alpha^{-1} \rceil \alpha -1 ) }$ and therefore $Q_{\eta}(\primm{k}{\betaa} \DD{k}{\betaa}\tau)=0$ provided that $k \geq \alpha^{-1}(\eta-|\gamma|+|\betaa|)-|\betaa| \lceil \alpha^{-1} \rceil$ and hence for $k \geq K(\eta-|\gamma|,|\betaa|)+1$.  
\end{proof}
Next we introduce a truncated version of $g$ denoted $g_{\eta}$, via
\begin{align}\label{sFin3g}
\tau \in \TT \mapsto g_{\eta}.\tau:=g.Q_{\eta}\tau=\sum_{|\beta|<\eta}g.P_\beta\tau.
\end{align}
We also need a variant of the morphism property \eqref{morphismP} for $g_{\eta}$ with $\eta \in (0,2)$.
Let us first consider a special case and let  $\sigma,\tau \in \TT$ with $\tau \in \TT_{|\gamma|}$ for some $|\gamma|<\eta$, then by \eqref{jc28} 
\begin{align}
g_{\eta}.(\sigma \tau)&=\sum_{|\beta|<\eta}\sum_{\beta_{1}+\beta_{2}=\beta}\nu^{\beta_{1,x}+\beta_{2,x}}da(u)^{\beta_{1}'+\beta_{2}' }\sigma_{\beta_{1}}(a(u))\tau_{\beta_{2}}(a(u)) \nonumber \\
&=\sum_{|\beta_{2}|=|\gamma|}\nu^{\beta_{2,x}}da(u)^{\beta_{2}'}\tau_{\beta_{2}}(a(u))\sum_{|\beta_{1}|<\eta+\alpha-|\gamma|}\nu^{\beta_{1,x}}da(u)^{\beta_{1}'} \sigma_{\beta_{1}}(a(u)) \nonumber \\
&=\big ( g_{\eta+\alpha-|\gamma|}.\sigma \big ) \big ( g_{\eta}.\tau \big ). \label{truncatedMorphism}
\end{align}
Note that the right-hand side is generally not the same as $(g_{\eta}.\sigma)(g_{\eta}.\tau)$, but instead involves a truncation at a (potentially) lower level $\eta+\alpha-|\gamma| \leq \eta$.  More generally, we find that for any $\sigma,\tau \in \TT$ it holds
\begin{equation}
    g_{\eta}.(\sigma \tau)=\sum_{|\gamma|<\eta}\big ( g_{\eta+\alpha-|\gamma|}.\sigma \big ) \big ( g.P_{\gamma}\tau \big ) \label{truncatedMorphism2},
\end{equation}
which follows from $g_{\eta}.(\sigma \tau)=g_{\eta}.(\sigma Q_{\eta}\tau)$, cf.~the inclusion \eqref{prodHomog}, then by decomposing the projection cf.~\eqref{sFin3} and applying \eqref{truncatedMorphism}.  We now combine Lemma \ref{lem:uspace}, Lemma \ref{lem:truncatedGamma}, and \eqref{truncatedMorphism} to obtain the following corollary, for which we recall the definition of $\mu(k,\betaa)$ in \eqref{jc36}.
\begin{corollary}\label{cor:taylor_identity}
Let $\eta \in (0,2)$, $\tau \in \TT_{|\gamma|}$ for $|\gamma|<\eta$, and $g$ be given by \eqref{md0g}. For all $u,u' \in \R$, $\nu, \nu ' \in \R^{d}$, there exist some $u^{0},u^{1}\in \R$ between $u$ and $u'$ such that
    \begin{align*}
    g_{\eta}&(u',\nu').\tau -g_{\eta}(u,\nu).\Gamma \tau \\
    &=\sum_{|\betaa|=0,1}\sum_{k=1-|\betaa|}^{K(\eta-|\gamma|, |\betaa|)}\frac{1}{k!\betaa!} \big ((u'-u)^{k}(\nu'-\nu)^{\betaa}-g_{\eta+\alpha-|\gamma|-\mu(k,\betaa)}(u,\nu).\tau^{(k,\betaa)}\big )g(u,\nu).D^{(k,\betaa)}\tau \nonumber \\
&+\sum_{|\betaa|=0,1}\frac{1}{K(\eta-|\gamma|,|\betaa|)!}(u'-u)^{K(\eta-|\gamma|,|\betaa|)}(\nu'-\nu)^{\betaa}\big(g(u^{|\betaa|},\nu)-g(u,\nu)\big).D^{(K(\eta-|\gamma|,|\betaa|), \betaa)}\tau.
\end{align*}
\end{corollary}
\begin{proof}
Since $\tau \in \TT_{|\gamma|}$ for $|\gamma|<\eta$, it holds that $g_{\eta}(u',\nu').\tau=g(u',\nu').\tau$ and hence
\begin{equation}
 g_{\eta}(u',\nu').\tau-g_{\eta}(u,\nu).\Gamma \tau =\big ( g(u',\nu')-g(u,\nu) \big ). \tau-g_{\eta}(u,\nu).(\Gamma \tau-\tau) \label{e190}.  
\end{equation}
We will analyze the first term using Taylor's theorem and \eqref{snew1}, while for the second term we will appeal to Lemma \ref{lem:truncatedGamma}.  Since $\eta$ and $|\gamma|$ are fixed throughout the proof, we will simply write $K(|\betaa|)$ instead of $K(\eta-|\gamma|,|\betaa|)$.  Indeed, for the second term in \eqref{e190} applying $g$ on both sides of \eqref{truncatedGamma}, cf.~\eqref{sFin3g} we find
\begin{align}
g_{\eta}.(\Gamma\tau-\tau)&=\sum_{|\betaa|=0,1}\sum_{k=1-|\betaa|}^{K(|\betaa|)}\frac{1}{k!\betaa!}g_{\eta}.( \primm{k}{\betaa} \DD{k}{\betaa}\tau) \nonumber \\
&=\sum_{|\betaa|=0,1}\sum_{k=1-|\betaa|}^{K(|\betaa|)}\frac{1}{k!\betaa!}g_{\eta+\alpha-|\gamma|-\mu(k,\betaa)}. \primm{k}{\betaa}g. \DD{k}{\betaa}\tau. \label{e1}
\end{align}
In the second equality, we used \eqref{truncatedMorphism} and $\DD{k}{\betaa}\tau \in \TT_{|\gamma|+\mu(k,\betaa)}$, cf.~\eqref{jc36}.
We also used that $g_{\eta}. \DD{k}{\betaa}\tau=g. \DD{k}{\betaa}\tau$ due to $|\gamma|+\mu(k,\betaa) < \eta$ for $k \leq K(|\betaa|)$, which is a result of the inequality $K(|\betaa|)<\alpha^{-1}(\eta-|\gamma|+|\betaa| )-|\betaa|$, c.f. \eqref{e187}.

\medskip

For the first term in \eqref{e190}, we write
\begin{align}\label{js109}
    \big ( g(u',\nu')-g(u,\nu) \big ). \tau&= \big ( g(u',\nu)-g(u,\nu) \big ). \tau + \big ( g(u',\nu')-g(u',\nu) \big ). \tau
\end{align}
and analyze both terms separately.
By Taylor's formula in $u$ to order $K(0)$ and \eqref{snew1}, there exists a $u^{0} \in \R$ between $u$ and $u'$ such that 
\begin{align}
    \big ( g(u',\nu)-g(u,\nu) \big ). \tau & =\sum_{k=1}^{K(0)}\frac{1}{k!}(u'-u)^{k}g(u,\nu).\DD{k}{0}\tau \nonumber\\
    &\quad + \frac{1}{K(0)!}(u'-u)^{K(0)}\big(g(u^{0},\nu)-g(u,\nu)\big). \DD{K(0)}{0}\tau \label{e191}.
\end{align}
For the second term in \eqref{js109}, we first use that $\partial_\nu^\betaa g=0$ for $|\betaa|>1$ to write
\begin{align*}
    \big ( g(u',\nu')-g(u',\nu) \big ). \tau = \sum_{|\betaa|=1}(\nu'-\nu)^\betaa \partial_\nu^\betaa g(u',\nu).\tau = \sum_{|\betaa|=1}(\nu'-\nu)^\betaa g(u',\nu).\DD{0}{\betaa}\tau.
\end{align*}
Now Taylor's formula in $u$ to order $K(1)$ and \eqref{snew1} yield a $u^1 \in \R$ between $u$ and $u'$ such that
\begin{align}
    \big ( g(u',\nu')-g(u',\nu) \big ). \tau &= \sum_{|\betaa|=1}\sum_{k=0}^{K(1)}(\nu'-\nu)^\betaa (u-u')^k g(u,\nu).\DD{k}{\betaa}\tau \nonumber\\
    & \quad + \sum_{|\betaa|=1}\frac{1}{K(1)!} (\nu'-\nu)^\betaa (u-u')^{K(1)} \big( g(u^1,\nu)-g(u,\nu) \big).\DD{K(1)}{\betaa}\tau. \label{e192}
\end{align}
Combining the identities \eqref{e190}-\eqref{e192} the proof is complete.
\end{proof}
\begin{remark}\label{rem:morphismExt}
In the above, we used that $\eta<2$ in our appeal to \eqref{truncatedMorphism}.  We remark that Corollary \ref{cor:taylor_identity} continues to hold if we relax this assumption to $\eta<1+\lceil \alpha^{-1}\rceil \alpha$ and $\tau \in \TT_{-}$.
Recall that in  the derivation of \eqref{truncatedMorphism}, this ensures that there is no contribution of the form $\sigma_{\beta_{1}}(a(u))\tau_{\beta_{2}}(a(u))$ where both $\beta_{1,x}\neq 0$ and $\beta_{2,x} \neq 0$, so the application of \eqref{jc28} is valid. 
\end{remark}
In our main application of the previous lemma, we will need to further simplify the quantity 
\begin{equation}
    (u'-u)^{k}(\nu'-\nu)^{\betaa}-g_{\eta+\alpha-|\gamma|-\mu(k,\betaa)}(u,\nu).\tau^{(k,\betaa)}
\end{equation}
in order to relate it to the semi-norms \eqref{js100} defined in the next section.  This is accomplished with the following lemma. 
\begin{lemma}\label{lem_identity}
Let $\kappa\in (0,2)$, $J\in \N$, $u_m\in\R$ and $\tau_m\in\TT$ for $m\in\set{1,\ldots,J}$.
The following identity holds:
\begin{align}
    &\bigg ( \prod_{m=1}^J u_m \bigg )-g_\kappa.(\tau_1\cdots\tau_J) \nonumber \\
    &= (u_1-g_{\kappa}.\tau_1)\prod_{m=2}^J u_m
     +\sum_{|\beta|<\kappa}\sum_{j=2}^J  (u_j-g_{\kappa+\alpha-|\beta|}.\tau_j) \bigg (  \prod_{m=j+1}^J u_m \bigg )g.P_{\beta}(\tau_{1}\cdots \tau_{j-1} ) \label{e2}  .
\end{align}
\end{lemma}
\begin{proof}
    We give a proof by induction on $J$.  For $J=1$, the claim 
follows immediately, noting that by convention the empty sum is zero and the empty product is one.  To provide some additional intuition, let us also consider the case $J=2$ (the reader can also skip directly to the case of general $J$ below).  Using \eqref{truncatedMorphism2},
\begin{align}
u_{1}u_{2}-g_{\kappa}.(\tau_{1}\tau_{2})&=(u_{1}-g_{\kappa}.\tau_{1})u_{2}+u_{2}(g_{\kappa}.\tau_{1})-\sum_{|\beta|<\kappa}(g_{\kappa+\alpha-|\beta|}.\tau_{2} )( g.P_{\beta}\tau_{1}) \nonumber \\
&=(u_{1}-g_{\kappa}.\tau_{1})u_{2}+\sum_{|\beta|<\kappa} (u_{2}-g_{\kappa+\alpha-|\beta|}.\tau_{2})( g.P_{\beta}\tau_{1}) \nonumber,
\end{align}
which is precisely \eqref{e2}.  We now proceed to the general inductive proof.  Let $J>1$ be such that the statement is true for $J-1$.
We multiply the induction hypothesis by $u_J$ 
resulting in 
\begin{align}
    &\bigg ( \prod_{m=1}^J u_m \bigg )-u_{J}g_\kappa.(\tau_1\cdots\tau_{J-1})  \nonumber \\
    &= (u_1-g_{\kappa}.\tau_1)\prod_{m=2}^{J} u_m
     +\sum_{|\beta|<\kappa}\sum_{j=2}^{J-1}  (u_j-g_{\kappa+\alpha-|\beta|}.\tau_j) \bigg (  \prod_{m=j+1}^J u_m \bigg )g.P_{\beta}(\tau_{1}\cdots \tau_{j-1} )\label{ee1}   .
\end{align}
Now we further analyze the LHS of the equality above and write
\begin{align}
    u_{J}g_\kappa.(\tau_1\cdots\tau_{J-1}) =\sum_{|\beta|<\kappa} u_{J}g.P_{\beta}(\tau_1\cdots\tau_{J-1})&=\sum_{|\beta|<\kappa} (u_{J}-g_{\kappa+\alpha-|\beta|}.\tau_{J} ) g.P_{\beta}(\tau_1\cdots\tau_{J-1}) \nonumber \\
    &+\sum_{|\beta|<\kappa}  g.P_{\beta}(\tau_1\cdots\tau_{J-1}) (g_{\kappa+\alpha-|\beta|}.\tau_{J}). \nonumber
\end{align}
The first term is now incorporated in the RHS of \eqref{ee1} to give the contribution from $j=J$, and it only remains to argue that the second term simplifies via
\begin{equation}
    g_{\kappa}.(\tau_1\cdots\tau_{J})=\sum_{|\beta|<\kappa} g.P_{\beta}(\tau_1\cdots\tau_{J-1}) (g_{\kappa+\alpha-|\beta|}.\tau_{J}) \nonumber,
\end{equation}
which follows immediately from \eqref{truncatedMorphism2}.
\end{proof}
\begin{remark}
Let us explain how we intend to apply Lemma \ref{lem_identity} in the context of Corollary \ref{cor:taylor_identity}.  We claim that for $|\betaa|=0,1$ and any $\kappa \in (0,2)$ it holds
\begin{align}
 &(u'-u)^{k}(\nu'-\nu)^{\betaa}-g_{\kappa}(u,\nu).\tau^{(k,\betaa)} \nonumber \\
 &= \big ((u'-u)^{1-|\betaa|}(\nu'-\nu)^{\betaa}-g_{\kappa}(u,\nu).(\tau^{(0)})^{1-|\betaa|}(\tau^{(1)})^{\betaa} \big ) (u'-u)^{k+|\betaa|-1}\nonumber \\
 &+\sum_{|\beta|<\kappa} \big ( u'-u-g_{\kappa+\alpha-|\beta|}(u,\nu).\tau^{(0)} \big )\sum_{j=2}^{k+|\betaa|}(u'-u)^{k+|\betaa|-j}g.P_{\beta}\tau^{(j-1-|\betaa|,\betaa)}\label{e8}
\end{align}
Indeed, for $|\betaa|=0,1$ this follows from Lemma \ref{lem_identity} with $J=k+|\betaa|$, $u_{1}=(u'-u)^{1-|\betaa|}(\nu'-\nu)^{ \betaa}$, $\tau_{1}=(\tau^{(0)})^{1-|\betaa|}(\tau^{(1)})^{\betaa}$ and $u_{m}=u'-u$, $\tau_{m}=\tau^{(0)}$  for $m=2,\cdots, J$.  The identity above makes it easy to estimate the LHS in terms of the semi-norms \eqref{js100} defined below.
\end{remark}

\section{Modelled Distributions}\label{sec:md}
Let $\domain\subset \R^{\di+1}$ be a bounded, open, convex domain.
Given functions $u:\domain\to\R$, $\nu=(\nu_1,\ldots,\nu_{\di}):\domain\to\R^{\di}$, and a cut-off value $\eta>0$, recalling \eqref{md0g}, \eqref{sFin3} we define a map $f_\eta:\domain\to \Tplus^*$ via
\begin{equation}
f_{\eta}(\txv).\tau:=g(u(\txv),\nu(\txv)).Q_\eta\tau.\label{md0}
\end{equation}
An important consequence of truncating $f_{\eta}$ at a finite level $\eta$ is the loss of the covariance property \eqref{sd1}, which has to be replaced by a corresponding continuity property.
To quantify this type of continuity of $f_{\eta}$ with respect to the base point $\txv\in \domain$ in the case of a finite cut-off level $\eta>0$, we take inspiration from \cite[Definition 3.7]{Hai14} and define the quantity\footnote{For ease of notation we do not explicitly state the dependence of $\opnorm{f}$, $[u]_\eta$ and $[\nu]_\eta'$ on $\domain$.} $\opnorm{f_{\eta}}$ to be the minimal $M>0$ such that for all $\tau \in \Tminus$, $\txv\in \domain$ and $\syv\in B_{\frac12\dist_\txv}(\txv)$ it holds
\begin{align}
\dist_{\txv}^{\eta} \big |f_{\eta}(\syv).\tau-f_{\eta}(\txv).\Gamma_{\syv\txv}\tau \big | \leq M \sum_{|\beta|<\eta}d^{\eta-|\beta|}(\syv,\txv)\dist_{\txv}^{\langle\beta\rangle\alpha}\|\tau \|_{\Ta{|\beta|} },\label{md19}
\end{align}
where we recall the shorthand-notation $\dist_\txv:=\dist(\txv,\partial\domain)$ for the parabolic distance of $\txv$ from the boundary of $\domain$.\footnote{On a first reading, we would advise the reader to ignore the factor of $\dist_{\txv}^\eta$ and think of $\syv \in \domain$ in the definitions \eqref{md19} and \eqref{js100}. Many of the core ideas in the paper are largely unrelated to this additional weight.  In fact, the weights could be completely avoided if one restricted attention to solutions to \eqref{intro3} which are space-time periodic, though in general due to the renormalization these may be difficult to construct even with smooth noise.}
By analogy to \eqref{jc78}, we seek to control $u$ and $\nu$ through the (weighted) nonlinear quantities
\begin{align}\label{js100}
\begin{split}
[u]_{\kappa}&:=\sup \bigg\{ \dist_{\txv}^\kappa 
\frac{ |u(\syv)-u(\txv)-f_{\kappa}(\txv).\primz_{\syv\txv} |}{d^{\kappa}(\syv,\txv)}: \txv\in \domain, \, \syv\in B_{\dist_{\txv}}(\txv)
\bigg \}, \quad \kappa>0.  \\
[\nu]_{\kappa}'&:=\sup \bigg\{ \dist_{\txv}^\kappa 
\frac{ |\nu(\syv)-\nu(\txv)-f_{\kappa}(\txv).\primo_{\syv\txv}  |}{d^{\kappa-1}(\syv,\txv)}: \txv\in \domain, \, \syv\in B_{\frac{1}{2}\dist_{\txv}}(\txv)
\bigg \}, \, \, \, \kappa>1.  \\
\end{split}
\end{align}

This control relaxes the formal identity \eqref{q2} and draws on the ideas of (controlled) rough paths developed in \cite{Gub04}, \cite{Lyo98}.
Observe that both quantities depend
not only on $u$ and $\nu$, but also on on $\xi$ and the nonlinearity $a$ via $f$. Note that $[u]_{\alpha}$ is just a weighted $\alpha$-H\"older semi-norm of $u$, while for higher values of $\kappa$, the quantity $[u]_{\kappa}$ is nonlinear in $u$. Similarly, $[\nu]'_{ \lceil \alpha^{-1} \rceil \alpha }$ is a weighted $(\lceil \alpha^{-1} \rceil \alpha-1)$-H\"older norm of $\nu$, while for higher values of $\kappa$, nonlinear effects come into play. We additionally introduce
\begin{equation}
\|\nu\|_{1}':=\sup_{\txv\in \domain} \dist_{\txv} |\nu(\txv)|    \label{js101a}
\end{equation}
The following lemma is at the core of the article.  It can be understood as a control on the semi-norm defined via \eqref{md19} in terms of \eqref{js100}-\eqref{js101a} in a way that scales optimally with respect to $u$ and $\nu$. This lemma is crucial in order to meet the requirements of reconstruction and integration, cf.~Propositions \ref{rec_lem} and \ref{int lem} which we will recall in Section \ref{sec_jets} below.  
\begin{contlem}\label{lem1}
Let $u:\domain\to\R$ and $\nu:\domain\to\R^{\di}$ be smooth functions
and define $f_{\eta}$ via \eqref{md0} with $\eta<1+ \lfloor \alpha^{-1} \rfloor \alpha$. 
Assume that for some $\delta \in (0,\frac{1}{2})$ it holds
$\|u\|+\delta^{\alpha}[\Pi] \leq 1$.  Then the following estimate holds:
\begin{equation}
    \opnorm{f_{\eta+\alpha} } \lesssim [u]_{\eta}+[\nu]_{\eta}'+ \delta^{-\eta} \nonumber.
\end{equation}
\end{contlem}
\begin{remark}
The assumption that $\|u\|+\delta^{\alpha}[\Pi] \leq 1$ is purely for convenience.
In fact, the two main inputs for the  \autoref{lem1} are the
\autoref{gradcontlem1}, which gives a more general bound on the above semi-norm, together with interpolation inequalities, cf.~Lemma \ref{lem: positiveInt}, and neither requires this assumption, but it slightly simplifies the combined output.
\end{remark}

\subsection{Graded Continuity Lemma}
We start by analyzing for each $|\gamma|<\eta$ the optimal $M_{\eta,|\gamma|}$ such that \eqref{md19} holds for all  $\tau \in \TT_{|\gamma|} \cap \Tminus$.  We refer to this as the Graded Continuity Lemma, since the estimate for $M_{\eta,|\gamma|}$ involves the semi-norms \eqref{js100} for variable $\kappa$, in comparison to the Continuity Lemma which involves only the semi-norm of order $\eta$.
\begin{gradcontlem}\label{gradcontlem1}
Let $\eta<1+\lceil \alpha^{-1} \rceil\alpha$.  For all $\tau \in \TT_{|\gamma|} \cap \Tminus$ with $|\gamma|<\eta$, $\txv\in \domain$ and $\syv\in B_{\frac12\dist_\txv}(\txv)$ it holds 
\begin{align}\label{coreEstimate_experimental}
\dist_{\txv}^{\eta-\langle\gamma\rangle\alpha} \big |f_{\eta}(\syv).\tau-f_{\eta}(\txv).\Gamma_{\syv\txv}\tau \big | \lesssim M_{\eta,|\gamma|} d^{\eta-|\gamma|}(\syv,\txv)\|\tau \|_{\Ta{|\gamma|} },
\end{align}
where 
\begin{align}
&M_{\eta,|\gamma|} \nonumber \\
&\lesssim 1+[\Pi]^{\frac{\eta-\langle \gamma \rangle \alpha}{\alpha}}+ \sup_{ \alpha \leq \kappa \leq \eta-\langle \gamma\rangle \alpha} [u]_{\kappa}^{\frac{\eta-\langle \gamma\rangle \alpha}{\kappa}} +  (\|\nu\|_{1}')^{\eta-\langle \gamma \rangle \alpha}+1_{|\gamma_{x}|=1}\sup_{\lceil \alpha^{-1} \rceil \alpha \leq \kappa \leq \eta-\langle \gamma \rangle \alpha}([\nu]'_{\kappa})^{\frac{\eta-\langle \gamma \rangle \alpha}{\kappa}} \label{e197}
\end{align}
\end{gradcontlem}
\begin{proof}[Proof of the Graded Continuity Lemma]
\newcounter{FCP} 
\refstepcounter{FCP} 

\medskip

Without loss of generality, we may assume $\|\tau\|_{\TT_{|\gamma|}}=1$.  For brevity, we will write  $K(|\betaa|)$ for $K(\eta-|\gamma|,|\betaa|)$.  We apply Corollary \ref{cor:taylor_identity}, taking into account Remark \ref{rem:morphismExt}, with $u'=u(\syv),u=u(\txv)$, $\nu'=\nu(\syv),\nu=\nu(\txv)$, and $\Gamma=\Gamma_{\syv\txv}$ to obtain
\begin{align}
&f_{\eta}(\syv).\tau -f_{\eta}(\txv).\Gamma_{\syv\txv} \tau    \nonumber \\
&=\sum_{|a|=0,1}\sum_{k=1-|a|}^{K(\betaa)}\frac{1}{k!\betaa!} \big ((u(\syv)-u(\txv))^{k}(\nu(\syv)-\nu(\txv))^{\betaa}-f_{\eta+\alpha-|\gamma|-\mu(k,\betaa)}(\txv).\tau^{(k,\betaa)}_{\syv\txv}\big )f(\txv).D^{(k,\betaa)}\tau \label{e9} \\
&+\sum_{|\betaa|=0,1}\frac{1}{K(\betaa)!}(u(\syv)-u(\txv))^{K(\betaa)}(\nu(\syv)-\nu(\txv))^{\betaa}\big(g(u^{|\betaa|},\nu(\txv))-f(\txv)\big).D^{(K(\betaa),\betaa)}\tau \label{e12},
\end{align}
for some intermediary point $u^{\betaa}$ with $|u^{\betaa}-u(\txv)| \leq |u(\syv)-u(\txv)|$. We will start by arguing the following bound, which is used in estimating both \eqref{e9} and \eqref{e12}: for all $|\betaa|\le 1$ and $1 \leq k \leq K(\betaa)+1$
\begin{align}
    |f(\txv).\DD{k}{\betaa}\tau|\le \|g(\cdot,\nu(\txv)).\DD{k}{\betaa}\tau\|_{C^{0}(\R)} \lesssim (1-1_{|\gamma_{x}|=0,|\betaa|=1})\big(\frac{\|\nu\|_1'}{\dist_\txv}\big)^{|\gamma_x|(1-|\betaa|)} \label{taylorRemainder}.
\end{align}
The first inequality is immediate.  For the second, notice that if $|\gamma_{x}|=0$ and $|\betaa|=1$, then $g(\cdot,\nu(\txv)).\DD{k}{\betaa}\tau=0$ since $\DD{k}{\betaa}\tau=0$, cf.~\eqref{as37}.  The other cases follow easily from the definitions \eqref{md0g} and \eqref{md0}, taking into account the convention
$\|a^{(m)}(\cdot)\|_{C^{0}(\R)} \lesssim 1$ for $m \leq \nn$, \eqref{jc36}, and the normalization $\|\tau\|_{\TT_{|\gamma|}}=1$.

\medskip

We now turn to estimating \eqref{e12}.  By \eqref{taylorRemainder}, and using that for $\delta\in [0,1]$ and $|\betaa|\le 1$ we have the (standard H\"older space) interpolation inequality
\begin{align*}
\|g(\cdot,\nu(\txv)).D^{(K(|\betaa|),\betaa)}\tau\|_{C^{\delta}(\R)}
 &\le \|g(\cdot,\nu(\txv)).D^{(K(|\betaa|),\betaa)}\tau\|_{C^{0}(\R)}+\|g(\cdot,\nu(\txv)).D^{(K(|\betaa|),\betaa)}\tau\|_{C^{1}(\R)} \\  
 &\le \|g(\cdot,\nu(\txv)).D^{(K(|\betaa|),\betaa)}\tau\|_{C^{0}(\R)}+\|g(\cdot,\nu(\txv)).D^{(K(|\betaa|)+1,\betaa)}\tau\|_{C^{0}(\R)} \\
 &\lesssim (1-1_{|\gamma_{x}|=0,|\betaa|=1})\big(\frac{\|\nu\|_1'}{\dist_\txv}\big)^{|\gamma_x|(1-|\betaa|)},
\end{align*}
setting $\delta=\frac{\eta-|\gamma|+|\betaa|}{\alpha}-\lceil \frac{\eta-|\gamma|+|\betaa|}{\alpha} \rceil+1\in [0,1]$ cf.~\eqref{e187}, we can bound each summand of \eqref{e12} by
\begin{align*}
  &|u(\syv)-u(\txv)|^{K(|\betaa|)}|u^{\betaa}-u(\txv)|^{\delta}|\nu(\syv)-\nu(\txv)|^{|\betaa|}\|g(\cdot,\nu(\txv)).D^{(K(|\betaa|),\betaa)}\tau\|_{C^{\delta}(\R)}\\
    &\lesssim (1-1_{|\gamma_{x}|=0,|\betaa|=1})d^{\eta-|\gamma|}(\syv,\txv)\dist_\txv^{\langle\gamma\rangle\alpha-\eta}[u]_\alpha^{\alpha^{-1}(\eta-|\gamma|)+|\betaa|(\alpha^{-1}-\lceil \alpha^{-1} \rceil ) }([\nu]_{\lceil \alpha^{-1} \rceil\alpha}')^{|\betaa|}(\|\nu\|_1')^{|\gamma_x|(1-|\betaa|)} \nonumber \\
    &\lesssim d^{\eta-|\gamma|}(\syv,\txv)\dist_\txv^{\langle\gamma\rangle\alpha-\eta} \bigg ( [u]_\alpha^{\frac{\eta-\langle \gamma\rangle \alpha}{\alpha}}+ 1_{|\gamma_{x}|=1}  (\|\nu\|_{1}')^{\eta-\langle \gamma \rangle \alpha}+1_{|\gamma_{x}|=1}([\nu]_{\lceil \alpha^{-1} \rceil\alpha}')^{\frac{\eta-\langle \gamma \rangle \alpha}{\lceil \alpha^{-1} \rceil\alpha} } \bigg ), \nonumber
\end{align*}
where we used Young's inequality in the last step, keeping in mind \eqref{model201}. 

\medskip

Now we turn to the estimate for \eqref{e9}: for the $(k,\betaa)$ summand we bound $f(\txv).\DD{k}{\betaa}\tau$ with \eqref{taylorRemainder} and estimate the other part of the product
using identity \eqref{e8} with $\kappa=\eta+\alpha-|\gamma|-\mu(k,\betaa)$.  Note that due to \eqref{taylorRemainder}, we are free to exclude the case $|\betaa|=1,|\gamma_{x}|=0$, so that our assumption $\eta<1+ \lfloor \alpha^{-1} \rfloor \alpha$ ensures $\kappa<2$.  We therefore obtain
\begin{align}
 &(u(\syv)-u(\txv))^{k}(\nu(\syv)-\nu(\txv))^{\betaa}-f_{\eta+\alpha-|\gamma|-\mu(k,\betaa)}(\txv).\tau^{(k,\betaa)}_{\syv \txv}  \nonumber \\
 &= \big ((u(\syv)-u(\txv))^{1-|\betaa|}(\nu(\syv)-\nu(\txv))^{\betaa}-f_{\eta+\alpha-|\gamma|-\mu(k,\betaa)}(\txv).(\tau^{(0)}_{\syv \txv} )^{1-|\betaa|}(\tau^{(1)}_{ \syv \txv})^{\betaa} \big ) \label{e10} \\
 & \qquad \qquad \qquad \qquad \qquad \qquad \qquad \qquad \qquad \qquad \qquad \qquad \times (u(\syv)-u(\txv))^{k+|\betaa|-1} \nonumber \\
 &+\sum_{|\beta|<\eta+\alpha-|\gamma|-\mu(k,\betaa)} \big ( u(\syv)-u(\txv)-f_{\eta+2\alpha-|\gamma|-\mu(k,\betaa)-|\beta|}(\txv).\tau^{(0)}_{\txv \syv} \big )\sum_{j=2}^{k+|\betaa|}(u(\syv)-u(\txv))^{k+|\betaa|-j} \label{e11} \\
 & \qquad \qquad \qquad \qquad \qquad \qquad \qquad \qquad \qquad \qquad \qquad \qquad \times f(\txv).P_{\beta}\tau^{(j-1-|\betaa|,\betaa)}_{\syv \txv} \nonumber.
\end{align}
We insert the above identity into \eqref{e9} and then estimate the resulting contribution, taking into account \eqref{taylorRemainder}.  The contribution from \eqref{e10} for $k \geq 1-|\betaa|$ is estimated 
by $d^{\eta-|\gamma|}(\syv,\txv)\dist_\txv^{\langle\gamma\rangle\alpha-\eta}$ multiplied by
\begin{align}
&(1-1_{|\gamma_{x}|=0,|\betaa|=1})\big ( [u]_{\eta+\alpha-|\gamma|-\mu(k,\betaa)} (\|\nu\|_1')^{|\gamma_x|}\big )^{1-|\betaa|}([\nu]_{\eta+\alpha-|\gamma|-\mu(k,\betaa) }')^{|\betaa|}[u]_{\alpha}^{k+|\betaa|-1} \nonumber \\
& \lesssim [u]_\alpha^{\frac{\eta-\langle \gamma\rangle \alpha}{\alpha}}+[u]_{\eta+\alpha-|\gamma|-k\alpha}^{\frac{\eta-\langle \gamma \rangle \alpha}{\eta+\alpha-|\gamma|-k\alpha}}+ 1_{|\gamma_{x}|=1}  (\|\nu\|_{1}')^{\eta-\langle \gamma \rangle \alpha}+1_{|\gamma_{x}|=1}([\nu]'_{\eta-(\langle \gamma\rangle+k)\alpha})^{\frac{\eta-\langle \gamma \rangle \alpha}{\eta-(\langle \gamma \rangle+k)\alpha}} \nonumber,
\end{align}
where the second line above is the result of Young's inequality.  Note that $\eta+\alpha-|\gamma|-k\alpha \leq \eta-\langle \gamma \rangle \alpha$, simply using $k \geq 0$ if $|\gamma_{x}|=1$ and $k \geq 1$ if $|\gamma_{x}|=0$, so the above is contained in $M_{\eta,|\gamma|}$.

\medskip

The contribution to \eqref{e9} from  \eqref{e11} is estimated similarly, but requires the following additional estimate
\begin{equation}
|f(\txv).P_{\beta}\tau^{(j-1-|\betaa|,\betaa)}_{\syv \txv} | \lesssim d^{|\beta|+(j-2)\alpha-|\betaa| }(\syv,\txv)\text{dist}_{\txv}^{-|\beta|-(j-2)\alpha} 
\begin{cases}
[\Pi]^{j-1}(\|\nu\|_1')^{|\beta_{x}|} , &\quad |\beta| \neq 1  \\
[\Pi]^{j-2}\|\nu\|_1', &\quad |\beta|=1 
\end{cases}
\label{e195}
\end{equation}
which follows from \eqref{e194}, \eqref{model40}, and \eqref{jc28}. In the case $|\beta|=1$, the improved exponent on $[\Pi]$ is a consequence of \eqref{jc03b}.  Indeed, if $|\betaa|=1$, then the left-hand side vanishes by Assumption \ref{jc73}, while if $|\betaa|=0$, Assumption \ref{sne1} implies via \eqref{jc03b} that the factor of $[\Pi]^{j-1}$ in \eqref{e195} can be replaced by $[\Pi]^{j-2}$ since $\beta_1+\ldots+\beta_{j-1}=\beta$ and $|\beta|=1$ implies that $|\beta_i|=1$ for one $i\in\set{1,\ldots,j-1}$ and $\beta_k=0$ for all $k\ne i$.  

\medskip

Applying \eqref{e195}, the contribution to \eqref{e9} from a summand in  \eqref{e11}   with $|\gamma|+|\beta|+\mu(k,\betaa)<\eta+2\alpha$, $|\beta| \neq1$ is estimated by $d^{\eta-|\gamma|}(\syv,\txv)\dist_\txv^{\langle\gamma\rangle\alpha-\eta}$ multiplied by 
\begin{align}
   &(1-1_{|\gamma_{x}|=0,|\betaa|=1})[u]_{\eta+2\alpha-|\gamma|-|\beta|-\mu(k,\betaa)}[u]_{\alpha}^{k-j+|\betaa|}(\|\nu\|_{1}')^{|\beta_{x}|+|\gamma_x|(1-|\betaa|)} [\Pi]^{j-1}   \nonumber \\
   & \lesssim [u]_\alpha^{\frac{\eta-\langle \gamma\rangle \alpha}{\alpha}}+[u]_{\eta+2\alpha-|\gamma|-|\beta|-\mu(k,\betaa)}^{\frac{\eta-\langle \gamma\rangle \alpha}{\eta+2\alpha-|\gamma|-|\beta|-\mu(k,\betaa)}}+(\|\nu\|_{1}')^{\eta-\langle \gamma \rangle \alpha}+[\Pi]^{\frac{(j-1)(\eta-\langle \gamma \rangle \alpha) }{(j+\langle \beta \rangle-2)\alpha} }  \nonumber,
\end{align}
where we used Young's inequality.  Notice that for $\beta$ satisfying the constraints above it holds $\langle \beta \rangle \geq 1$, so that again by Young's inequality it holds $[\Pi]^{\frac{(j-1)(\eta-\langle \gamma \rangle \alpha) }{(j+\langle \beta \rangle-2)\alpha} } \leq 1+{\Pi}^{\frac{\eta-\langle \gamma \rangle \alpha}{\alpha}}$.  Finally, we note that if $|\beta|=1$ so that $\langle \beta \rangle=0$, the same estimate holds by the same argument, simply accounting for the change in the exponent of $[\Pi]$ resulting from the second case in \eqref{e195}.
\end{proof}

\subsection{Interpolation Inequalities}
The final ingredient to pass from the graded continuity lemma to the continuity lemma is interpolation inequalities.
\begin{lemma}\label{lem: positiveInt}
Let $D\subset\R^d$ be a domain.
Let $u:D\to\R$ be a smooth function and define $\nu:D\to\R$ via \eqref{jc71}.
Let $\alpha<\kappa<\eta<2$. For all $\delta \in (0,\frac{1}{2})$ it holds
\begin{align}
[u]_{\kappa} &\lesssim  [u]_{\eta}^{\frac{\kappa}{\eta}}(\|u\|+\delta^{\alpha}[\llhs] )^{1-\frac{\kappa}{\eta}}+\big ( \|u\|+\delta^{\alpha}[\llhs] \big )(\delta^{-1} \vee [\llhs]^{\frac{1}{\alpha}})^{\kappa}, \label{e174} \\
 \|\nu\|_{1}' &\lesssim  [u]_{\eta}^{\frac{1}{\eta}}(\|u\|+\delta^{\alpha}[\llhs] )^{1-\frac{1}{\eta}}+\big ( \|u\|+\delta^{\alpha}[\llhs] \big )(\delta^{-1} \vee [\llhs]^{\frac{1}{\alpha}}) \qquad \qquad && \eta>1, \label{e175} \\
  [\nu]_{\kappa}' &\lesssim ([\nu]_{\eta}')^{\frac{\kappa-1}{\eta-1}}\big (\| \nu\|_{1}'+\delta^\alpha [\Pi] \big )^{\frac{\eta-\kappa}{\eta-1}} + (\|\nu\|_1'+\delta^\alpha[\llhs])\big ( \delta^{-1} \vee [\llhs]^{\frac{1}{\alpha}} \big )^{\kappa-1} && \kappa>1. \label{e176}
\end{align}
The implicit constants are universal and independent of $\delta$.
\end{lemma}
\begin{proof}
We start with the following claim: for any $R\in(0,1)$ the following inequalities hold
\begin{align}
    [u]_{\kappa} &\lesssim  [u]_\eta R^{\eta-\kappa}+\big ( \|u\| + R^{\alpha}[\Pi]  \big )R^{-\kappa}+ \|\nu\|_1' R^{1-\kappa}+  [\Pi]\|\nu\|_{1}' R^{1+\alpha-\kappa}, \label{e172} \\
\|\nu\|_{1}' &\lesssim [u]_{\eta}R^{\eta-1}+(\|u\|+ R^{\alpha}[\Pi])R^{-1}+[\llhs]\|\nu\|_1'R^{\alpha}, \label{e173} \\
[\nu]_{\kappa} &\lesssim [\nu]_\eta R^{\eta-\kappa}+  \big ( \|\nu\|_1' + R^{\alpha}[\Pi]  \big )R^{1-\kappa}  + [\Pi]\|\nu\|_{1}' R^{1+\alpha-\kappa}\label{e171}.
\end{align}
We recall in advance that $\|\primz_{\txv \syv}\|_{\TT_{|\beta|}} \leq \dist_\txv^{|\beta_x|}[\llhs]\big ( \frac{d(\txv,\syv)}{\dist_{\txv}} \big )^{|\beta|}$
for all $\beta\in \Aa{}$, and hence in particular $|\nu(x)|\|\primz_{\txv \syv}\|_{\TT_{|\beta|}} \leq \|\nu\|_{1}'[\Pi]\big ( \frac{d(\txv,\syv)}{\dist_{\txv}} \big )^{|\beta|}$, cf.~\eqref{js101a}.
The reader should also keep in mind that by convention $|da(u)^{\beta}| \lesssim 1$ for all $\beta\in \Aa{}$.
To show \eqref{e172}, pick $R\in(0,1)$ and $\txv,\syv\in D$ with $\syv\in B_{\dist_\txv}(\txv)$.
Notice that for $\frac{d(\txv,\syv)}{\dist_{\txv}} \geq R$ it holds that 
\begin{align}
&\big ( \frac{\dist_{\txv}}{d(\txv,\syv)} \big )^{\kappa} \big |u(\syv)-u(\txv)-f_{\kappa}(\txv).\primz_{\syv\txv} \big | \nonumber \\
&\lesssim R^{-\kappa}\|u\|+[\llhs] \sum_{\substack{\alpha\le|\beta|<\kappa \\|\beta_{x}|=0}}R^{|\beta|-\kappa}+R^{1-\kappa}\|\nu\|_1'+[\Pi]\|\nu\|_1' \sum_{\substack{1<|\beta|<\kappa\\ |\beta_{x}|=1}}R^{|\beta|-\kappa}. \nonumber
\end{align}
Alternatively, for $\frac{d(\txv,\syv)}{\dist_{\txv}} \leq R$ it holds that
\begin{align}
&\big ( \frac{\dist_{\txv}}{d(\txv,\syv)} \big )^{\kappa} \big |u(\syv)-u(\txv)-f_{\kappa}(\txv).\primz_{\syv\txv} \big | \nonumber \\
&\le \big ( \frac{\dist_{\txv}}{d(\txv,\syv)} \big )^{\kappa} \big |u(\syv)-u(\txv)-f_{\eta}(\txv).\primz_{\syv\txv} \big | +\big ( \frac{\dist_{\txv}}{d(\txv,\syv)} \big )^{\kappa} \big |(f_{\eta}-f_{\kappa})(\txv).\primz_{\syv\txv} \big | \nonumber \\
& \lesssim R^{\eta-\kappa}[u]_{\eta}+[\Pi] \sum_{\substack{\kappa \leq |\beta|<\eta\\ |\beta_x|=0}} R^{|\beta|-\kappa}+[\Pi]\|\nu\|_{1}' \sum_{\substack{\kappa \leq |\beta|<\eta\\|\beta_{x}|=1}} R^{|\beta|-\kappa} \nonumber.
\end{align}
Combining the two observations and using that $R^{|\beta|} \leq R^{\alpha}$ for $|\beta|>0$ and
$R^{|\beta|} \leq R^{1+\alpha}$ for $|\beta|>1, |\beta_{x}|=1$, we obtain \eqref{e172}.
The bound \eqref{e171} follows by an analogous argument.
To show \eqref{e173}, given $\txv \in D$ and $1\le i\le d$, let $\syv:=\txv+R\dist_{\txv}e_{i} \in D$, so that $\frac{d(\txv,\syv)}{\dist_{\txv}}=R$.  
Applying this, we find with $\dist_{\txv}|\nu(\txv)|\le \|\nu\|_1'$ that
\begin{align}
R &\dist_{\txv}|\nu_{i}(\txv)|=|\nu_{i}(\txv)(y-x)_{i} | \nonumber \\
&\lesssim |u(\syv)-u(\txv)-f_{\eta}(\txv).\primz_{\syv\txv}|\|u\|+[\llhs]\sum_{\substack{\alpha\le|\beta|<\eta\\ |\beta_x|=0}}  R^{|\beta|} +[\llhs] \sum_{\substack{1+\alpha\le |\beta|<\eta \\ |\beta_x|=1}}(\|\nu\|_{1}')^{|\beta_{x}|} R^{|\beta|}\nonumber \\
&\lesssim [u]_{\eta}R^{\eta}+\|u\|+ R^{\alpha}[\Pi]+[\llhs]\|\nu\|_1'R^{1+\alpha} \nonumber.
\end{align}
Taking the supremum over $1 \leq i \leq d$ and dividing by $R$ yields \eqref{e173}.

\medskip

Recall that $\delta \in (0,\frac{1}{2})$ has been fixed in advance.  We now claim that there is a universal $\epsilon>0$ that for any for any $R$ satisfying both
\begin{equation}
    R \leq \delta, \quad R^{\alpha}[\Pi] \leq \epsilon
\label{e4},
\end{equation}
the following inequalities hold
\begin{align}
     [u]_{\kappa} &\lesssim [u]_{\eta}R^{\eta-\kappa}+ \big ( \|u\|+ \delta^\alpha [\Pi] \big )R^{-\kappa} \label{e17}, \\
    \|\nu \|_{1}' &\lesssim  [u]_{\eta}R^{\eta-1}+ \big (\|u\|+\delta^{\alpha}[\Pi] \big )R^{-1} \label{e193}, \\
    [\nu]_{\kappa} &\lesssim [\nu]_{\eta}R^{\eta-\kappa}+ \big ( \|\nu\|_1'+ \delta^\alpha [\Pi] \big )R^{1-\kappa} \label{e170}.
\end{align}
Indeed, it is clear from \eqref{e173} that a sufficiently small and universal $\epsilon$ can be chosen to obtain \eqref{e193}.  Now \eqref{e17} and \eqref{e170} follow in virtue of \eqref{e4} by inserting \eqref{e193} into \eqref{e172} and \eqref{e171}.

\medskip

We would like to choose $R$ to balance the two terms in \eqref{e17} and \eqref{e193}.
In both cases this corresponds to choosing $R^{\eta}=(\|u\|+\delta^{\alpha}[\Pi] )[u]_{\eta}^{-1}$.  If \eqref{e4} holds with this choice of $R$, then we obtain the (homogeneous) interpolation inequalities
\begin{align}    
[u]_{\kappa} \lesssim  [u]_{\eta}^{\frac{\kappa}{\eta}}(\|u\|+\delta^{\alpha}[\llhs] )^{1-\frac{\kappa}{\eta}}, \quad
\|\nu\|_{1}' \lesssim   [u]_{\eta}^{\frac{1}{\eta}}(\|u\|+\delta^{\alpha}[\llhs] )^{1-\frac{1}{\eta}}, \nonumber
\end{align} 
which imply \eqref{e174} and \eqref{e175}.  If instead \eqref{e4} fails, then it follows that 
\begin{align}
    [u]_{\eta} \leq \big ( \|u\|+\delta^{\alpha}[\llhs] \big ) [\delta^{-\eta} \vee (\epsilon^{-1}[\llhs])^{\frac{\eta}{\alpha}}] 
     \lesssim \big ( \|u\|+\delta^{\alpha}[\llhs] \big ) (\delta^{-1} \vee [\llhs]^{\frac{1}{\alpha}})^{\eta} \label{e177} .
\end{align}
In this case, we apply \eqref{e193} and \eqref{e17} with $R=\delta \wedge \big (  \epsilon[\Pi]^{-1} \big )^{\frac{1}{\alpha}} \approx \delta \wedge [\Pi]^{-\frac{1}{\alpha}}$, which satisfies \eqref{e4} by design, then we insert \eqref{e177}.  
This yields
\begin{equation}
\|\nu \|_{1}' \lesssim \big ( \|u\|+\delta^{\alpha}[\llhs] \big )[R^{\eta-1}(\delta^{-1} \vee [\Pi]^{\frac{1}{\alpha}})^\eta +R^{-1}] \lesssim \big ( \|u\|+\delta^{\alpha}[\llhs] \big )(\delta^{-1} \vee [\llhs]^{\frac{1}{\alpha}}), \nonumber     
\end{equation}
which implies \eqref{e175}.  Similarly, we obtain 
\begin{equation}
[u]_{\kappa} \lesssim \big ( \|u\|+\delta^{\alpha}[\llhs] \big )(\delta^{-1} \vee [\llhs]^{\frac{1}{\alpha}})^{\kappa} \nonumber,
\end{equation}
which implies \eqref{e174}. 
This completes the proof of the estimates on $[u]_\kappa$ and $\|\nu\|_1'$.

\medskip
The proof of \eqref{e176} follows a similar argument.
To balance terms in \eqref{e170}, we would need to choose 
$R^{\eta-1}=(\|\nu\|_1'+\delta^\alpha [\llhs])[\nu]_{\eta}^{-1}$.  If \eqref{e4} holds with this choice of $R$, we find
\begin{equation}
[\nu]_{\kappa}' \lesssim \big (\| \nu\|_{1}'+\delta^\alpha [\Pi] \big )^{\frac{\eta-\kappa}{\eta-1}}([\nu]_{\eta}')^{\frac{\kappa-1}{\eta-1}},\nonumber
\end{equation}
which implies \eqref{e176}. Otherwise, we find that
\begin{align}
    [\nu]_{\eta} 
     \lesssim \big ( \| \nu\|_{1}'+\delta^{\alpha}[\llhs] \big ) (\delta^{-1} \vee [\llhs]^{\frac{1}{\alpha}})^{\eta-1}, \nonumber
\end{align}
 so that choosing again $R \approx \delta \wedge [\Pi]^{-\frac{1}{\alpha}}$ leads us to 
\begin{equation}
    [\nu]_{\kappa}  \lesssim \big ( \| \nu\|_{1}'+\delta^{\alpha}[\llhs] \big ) (\delta^{-1} \vee [\llhs]^{\frac{1}{\alpha}})^{\kappa-1}, \nonumber
\end{equation}
which implies \eqref{e176}.  
\end{proof}

\subsection{Proof of the Continuity Lemma}
\begin{proof}[Proof of the \autoref{lem1}]
In light of the Graded Continuity Lemma (applied with $\eta+\alpha$ in place of $\eta$), it suffices to show that for all $|\gamma|<\eta+\alpha$ with $\langle \gamma \rangle \geq 1$ (since $\tau \in \TT_{-} \cap \TT_{|\gamma|}$ implies $\langle \gamma\rangle \geq 1$) it holds
\begin{align}
M_{\eta+\alpha,|\gamma|} &\lesssim [u]_{\eta}+  [\nu]'_{\eta} + \delta^{-\eta}    \label{e185}.
\end{align}
We will make use of the following interpolation inequalities:
\begin{align}
[u]_{\kappa} \lesssim [u]_{\eta}^{\frac{\kappa}{\eta}}+\delta^{-\kappa} , \quad
 \|\nu\|_{1}' \lesssim  [u]_{\eta}^{\frac{1}{\eta}}+\delta^{-1}  , \quad [\nu]_{\kappa}' \lesssim ([u]_{\eta} \vee [\nu]_{\eta}' )^{\frac{\kappa}{\eta} }+\delta^{-\kappa}.   \label{e182}
\end{align}
The first two follow immediately from \eqref{e174}-\eqref{e175} in light of our assumption $\|u\|+\delta^{\alpha}[\Pi] \leq 1$.  We now argue the third inequality.   
Indeed, inserting the second inequality in \eqref{e182} into \eqref{e176} and using $\delta^{\alpha}[\Pi] \leq 1 \leq \delta^{-1}$ we find
\begin{align}
[\nu]_{\kappa}' &\lesssim ([\nu]_{\eta}')^{\frac{\kappa-1}{\eta-1}}\big ( [u]_{\eta}^{\frac{1}{\eta}}+\delta^{-1}  \big )^{\frac{\eta-\kappa}{\eta-1}} + \big ( [u]_{\eta}^{\frac{1}{\eta}}+\delta^{-1}  \big ) \delta^{-(\kappa-1)} \nonumber \\
&\lesssim  \big ( [u]_{\eta} \vee [\nu]_{\eta}' \big )^{\frac{\kappa}{\eta}}+([\nu]_{\eta}')^{\frac{\kappa-1}{\eta-1}} \delta^{-\frac{\eta-\kappa}{\eta-1}}+[u]_{\eta}^{\frac{1}{\eta}} \delta^{-(\kappa-1) }+\delta^{-\kappa} \nonumber,
\end{align}
which implies the claim via Young's inequality applied to the second and third terms.

\medskip

In particular, from \eqref{e182} it follows that for any exponent $p \leq \eta$ it holds
\begin{equation}
[u]_{\kappa}^{\frac{p}{\kappa}}+(\|\nu\|_{1}')^{p}+([\nu]_{\kappa}')^{\frac{p}{\kappa}} \lesssim [u]_{\eta}+  [\nu]'_{\eta} +  \delta^{-\eta}  \label{e186}.
\end{equation}
Hence, using \eqref{e197} followed by \eqref{e186} with $p=\eta-(\langle \gamma \rangle-1)\alpha \leq \eta$,  we obtain \eqref{e185}.
\end{proof}

\section{Jets and Proof of the Main Theorem}\label{sec_jets}
In this section, we present the two main analytic ingredients required for our method, reconstruction and integration in the language of \cite{Hai14}, then show how to apply them to deduce our main result.
The general strategy is to prove \eqref{jc72}, 
from which the (weaker) bound \eqref{jc63} easily follows from our interpolation inequalities, cf.~\eqref{e174}.
Hence, our main focus is on estimating the $[u]_{\eta}$ and $[\nu]_{\eta}'$ semi-norms, which we view abstractly as a semi-norm on a specific jet of smooth functions.
By a jet, we mean a family of functions $U_\txv:\domain\to\R$ indexed by a base point $\txv\in \domain\subset \R^{\di+1}$.
These are used to describe the remainder that appears in the formal identities \eqref{q2} and \eqref{q12} when the linear form $f(\txv)$ is replaced by its truncated version $f_{\eta}(\txv)$.   
More precisely, given $u$ satisfying \eqref{intro3} and $\nu$ defined by \eqref{jc71}, we define
\begin{align}\label{def_UF}
\begin{split}
U_\txv&:=u - u(\txv) - f_{\eta}(\txv).\llhs_{\txv},\\
F_\txv&:=\big ( a(u)-a(u(\txv)) \big )\Deltap u+\xi - h(u) - f_{\eta+\alpha}(\txv).\lrhs_{\txv},
\end{split}
\end{align}
where $\eta \in (1,2)$ will be chosen sufficiently close to $2$ depending on how close $\alpha$ is to $0$, cf.~\eqref{jc40}.
Our goal is then to estimate $\text{dist}_{\txv}^{\eta}d^{-\eta}(\syv,\txv)|U_\txv(\syv)|$, which is done using Proposition \ref{int lem}, a generalization of classical Schauder theory to jets.

\medskip

To describe the input for Proposition \ref{int lem}, let us fix our notational conventions for convolutions: we say that $\convker$ is a symmetric convolution kernel if it is a Schwartz function with integral $1$ satisfying $\convker(t,x)=\convker(t,-x)$.
For a fixed $\rho$, we use $\lambda>0$ to denote a convolution parameter and write $(\cdot)_{\mop}$ for the convolution with $\convker_\mop$, where $\convker_\mop(t,x):=\mop^{-(\di+2)}\convker(\mop^{-2}t,\mop^{-1}x)$.  Specifically, given a (regular) tempered distribution $F$ and a kernel $\convker$, we define $F_{\lambda}(\txv):=\int_{\R^{\di+1}} F(\syv)\rho_{\lambda}(\txv-\syv)d\syv$, 
and omit the specific kernel from the notation.\footnote{In the notation of Hairer \cite{Hai14}, $F_{\lambda}(\txv)=\langle F, \convker_{\txv}^{\mop} \rangle$.}

\medskip

The most important input for Proposition \ref{int lem} is \eqref{KS1New}, which requires for each base-point $\txv \in \domain$ a control on $\text{dist}_{\txv}^{\eta} \lambda^{2-\eta}|(\partial_{t}-a(u(\txv))\Delta)U_{\txv\lambda }(\txv)|$, the natural extension of the $C^{\eta-2}$ semi-norm applied to the jet $\{ (\partial_{t}-a(u(\txv))\Delta)U_{\txv}\}_{\txv}$.  At this point,
the compatibility between $\Pi_{\mathbf{x}}$ and $\Pi_{\mathbf{x}}^{-}$ in the form of \eqref{jc60} becomes important, as it
leads to a compatibility between $U_\txv$ and $F_\txv$ up to a correction.  Specifically, it follows from \eqref{jc60} and \eqref{intro3} that
\begin{equation}\label{js902}
(\partial_{t}-a(u(\txv))\Delta)U_{\mathbf{x}}=F_{\mathbf{x}}+(f_{\eta}(\mathbf{x})-f_{\eta+\alpha}(\mathbf{x}) ).\Pi_{\mathbf{x}}^{-}.
\end{equation}
The second term on the right-hand side is straightforward to estimate to order $\eta-2$ since it only involves $\Pi^{-}_{\txv\beta}$ for $|\beta| \geq \eta$, and we can easily combine \eqref{jc60} and \eqref{model7a} for this purpose, cf.~\eqref{model7}.  Hence, our main task is to estimate $\dist_{\txv}^{\eta} \lambda^{2-\eta}|F_{\txv \lambda}(\txv)|$, which is where Proposition \ref{rec_lem} comes into play.  The main input for Proposition \ref{rec_lem} is \eqref{R1}, which requires a bound on $\big | F_{\txv \mu}(\syv)-F_{\syv \mu}(\syv) \big |$ in terms of a sum of terms of the form $d^{\kappa_{1}}(\syv,\txv)\mu^{\kappa_{2}}$ where $\kappa_{1}+\kappa_{2}=\delta>0$.  It follows immediately from the definition \eqref{def_UF} that 
\begin{equation}
F_{\txv \mu}(\syv)-F_{\syv \mu}(\syv)=(a(u(\syv))-a(u(\txv)) )\Delta u_{\mu}(\syv)+f_{\eta+\alpha}(\syv).\Pi_{\syv \mu}^{-}(\syv)-f_{\eta+\alpha}(\txv).\Pi^{-}_{\txv \mu}(\syv). \end{equation}
This identity needs to be further re-arranged: for instance the first term could at best be bounded by $[u]_{\alpha}^{2}d^{\alpha}(\syv,\txv)\mu^{\alpha-2}$, but adding the exponents gives $2\alpha-2<0$ for $\alpha<1$.  Fortunately, there are many cancellations between the above quantities, and we establish that in the following way.  We start by arguing the following change of base-point for the negative model
\begin{equation}
\lrhs_{\txv}=\Gamma_{\syv\txv}\lrhs_{\syv}+\sum_{k \geq 1}\polv_k\llhs_\txv(\syv)^k \Delta\llhs_{\txv}. \end{equation}
Inserting this above we find that
\begin{align}
F_{\txv \mu}(\syv)-F_{\syv \mu}(\syv)&=\big(f_{\eta+\alpha}(\syv).\id - f_{\eta+\alpha}(\txv).\Gamma_{\syv\txv}\big)\lrhs_\syv \nonumber \\
&+\big(a(u(\syv))-a(u(\txv) ) \big )\Delta u_{\mu}(\syv)-\sum_{k\ge 1}f_{\eta+\alpha}(\txv).(\polv_k\llhs_\txv(\syv)^k \Delta\llhs_{\txv}) \nonumber.
\end{align}
The first term is exactly tailored to the modelled distribution norm \eqref{md19} and leads to a sum of terms of the form $\opnorm{f_{\eta+\alpha}}[\Pi]d^{\eta+\alpha-|\beta|}(\syv,\txv)\mu^{|\beta|-2}$, which explains the role of the \autoref{lem1}.
The second contribution above is precisely the truncated version of \eqref{q7}.
We show below that by a Taylor expansion and further application of Lemma \ref{lem_identity} (similar to the proof of the Graded Continuity Lemma), this quantity can be estimated by a sum of terms: the ones of highest order taking the form $[u]_{\kappa}^{\frac{\eta+\alpha}{\kappa}}d^{\eta+\alpha-|\beta|}(\syv,\txv) \Mop^{|\beta|-2}$, where $\alpha \leq \kappa \leq \eta$.
This suggests to apply Proposition \ref{rec_lem} with $\delta=\eta+\alpha-2$ provided that $\eta>2-\alpha$.
The output from reconstruction is of order $\eta+\alpha-2$, which in particular implies a control of order $\eta-2$.
If we take the above estimates as an input for integration, we would find 
\begin{equation}
[u]_{\eta}+[\nu]_{\eta}' \lesssim ([u]_{\eta}+[\nu]_{\eta}')[\Pi]+\sum_{\alpha \leq \kappa \leq \eta} [u]_{\kappa}^{\frac{\eta+\alpha}{\kappa}}+\text{l.o.t} \label{sFin6},  \end{equation}
where the first term comes from the \autoref{lem1}.  Unfortunately, even with the help of our interpolation inequalities, this estimate does not close without an additional a priori smallness constraint on $[u]_{\eta}$ and $[\Pi]$. For typical pertubative results, this is usually achieved via a continuity argument or exploiting an additional small factor of the time interval. 
However, our main result is of a different character and takes for granted only the rather weak input: smallness of $\|u\|$.
To improve on \eqref{sFin6}, we need to be more careful in passing from the $\eta+\alpha-2$ output from reconstruction to the $\eta-2$ input required for integration.
Namely, in Lemma \ref{jq007} we interpolate between the (optimal)  $\eta+\alpha-2>0$ description (on small scales) with a rather coarse $\alpha-2$ bound (on large scales) to obtain an $\eta-2<0$ description where the quantities on the right-hand side of \eqref{sFin6} are effectively raised to the power $\frac{\eta}{\eta+\alpha}$, allowing us to apply our interpolation inequalities and  buckle under smallness of $\|u\|$.

\medskip

\subsection{Approximation by Jets}\label{sec:jets}
We are interested in jets that are uniformly locally bounded, which we monitor using the quantity
\begin{align}\label{jc79}
 \Iu{U}&:=\sup\setc{|U_\txv(\syv)|}{\txv\in \domain, \syv\in B_{\dist_{\txv}}(\txv)}, 
\end{align}
where we recall the shorthand-notation $\dist_\txv:=\dist(\txv,\partial\domain)$ for the distance of $\txv$ from the boundary of $\domain$. Moreover, we measure higher regularity of order $\eta>0$ via the weighted quantities
\begin{align}\label{jc78}
\begin{split}
 [U]_{\eta}&:=\sup\setc{\dist_{\txv}^\eta \frac{|U_\txv(\syv)|}{d^\eta(\syv,\txv)}}{\txv\in \domain, \syv\in B_{\dist_{\txv}}(\txv)}, \\
 [U]_{\eta}'&:=\sup\setc{\dist_{\txv}^\eta \frac{|U_\txv(\syv)|}{d^{\eta-1}(\syv,\txv)}}{\txv\in \domain, \syv\in B_{\frac12\dist_{\txv}}(\txv)}.
\end{split}
\end{align}
The second definition is used to monitor jets related to analogues of Gubinelli derivatives, which explains the subscript $\eta$ despite $U$ being measured against $d^{\eta-1}(\syv,\txv)$. It also hints to why the supremum over $\syv$ is taken over a smaller ball.  The definition \eqref{jc78} is designed to be compatible with \eqref{js100} (for a specific choice of jet).

\medskip

We first cite a local reconstruction assertion, which can be found in essentially this form in \cite{MoW20}; it is a local version of the reconstruction theorem in \cite{OtW19}, both being inspired by related results in \cite{Hai14}.

\medskip

\begin{proposition}\label{rec_lem} (Reconstruction)
Let $\delta>0$ and $\mathsf{K} \subset (-\infty,\delta)$ finite. There is a symmetric convolution kernel $\convker$ with $\supp\convker\subset B_1(0)$ with the following property. Fix $\mathbf{z}\in \domain$, $\mop\in(0,1)$ with $\mop<\dist_\rzv$. Assume that for a jet of smooth functions $\{F_\txv\}_{\txv}$ such that $F_\rzv(\rzv)=0$, there is $C>0$ such that for all $\Mop\in (0,\mop)$ and $\txv,\syv\in B_{\mop-\Mop}(\rzv)$ it holds
\begin{equation}
\big | F_{\mathbf{x}\Mop}(\mathbf{y})-F_{\mathbf{y}\Mop}(\mathbf{y}) \big | \leq C \sum_{\kappa \in \mathsf{K}}d^{\delta-\kappa}(\syv,\txv) \TMc{\kappa}\label{R1},
\end{equation}
Then we have
\begin{equation}
\big |F_{\mathbf{z}\mop}(\mathbf{z} ) \big | \lesssim C \Tc{\delta}\label{R2},
\end{equation}
where the implicit constant depends only on $\eta$, $\mathsf{K}$, and the dimension $\di$.
\end{proposition}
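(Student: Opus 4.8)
The plan is to recognise Proposition \ref{rec_lem} as the local form of Hairer's reconstruction theorem and to prove it by the standard dyadic multiscale telescoping, essentially as in \cite{OtW19} and \cite{MoW20}. First I would fix the kernel: choose $\convker$ symmetric, smooth, with $\int_{\R^{\di+1}}\convker=1$ and $\supp\convker\subset B_r(0)$ for a radius $r=r(\eta,\Aa{},\di)>0$ taken small enough that all applications of hypothesis \eqref{R1} made below stay legitimate, i.e.\ the points occurring there remain inside the balls $B_{\mop-\Mop}(\syv)$. Using $EF(\rzv)=F(\rzv,\rzv)$ we rewrite the quantity of interest as
\begin{equation*}
(EF)_\mop(\syv)-F_\mop(\syv,\syv)=\int_{\R^{\di+1}}\big(F(\rzv,\rzv)-F(\syv,\rzv)\big)\,\convker_\mop(\syv-\rzv)\,\di\rzv ,
\end{equation*}
and the goal is to bound the right-hand side by $\lesssim C\,\Tc{\eta}$.

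Next I would telescope the test function over the dyadic scales $\Mop_n:=2^{-n}\mop$, $n\ge0$. As $\convker_{\Mop_N}\to\delta_0$ for $N\to\infty$, one has $\convker_\mop(\syv-\cdot)=\delta_\syv+\sum_{n\ge0}(\convker_{\Mop_n}-\convker_{\Mop_{n+1}})(\syv-\cdot)$ as distributions, and the $\delta_\syv$-term drops because the integrand $\rzv\mapsto F(\rzv,\rzv)-F(\syv,\rzv)$ vanishes at $\rzv=\syv$; hence
\begin{equation*}
(EF)_\mop(\syv)-F_\mop(\syv,\syv)=\sum_{n\ge0}\int\big(F(\rzv,\rzv)-F(\syv,\rzv)\big)\,(\convker_{\Mop_n}-\convker_{\Mop_{n+1}})(\syv-\rzv)\,\di\rzv ,
\end{equation*}
the series being absolutely convergent by the (merely qualitative) smoothness of $F$. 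It then suffices to prove that the $n$-th term is $\lesssim C\,\Mop_n^{\eta}$, since $\sum_{n\ge0}C\Mop_n^{\eta}=C\mop^{\eta}\sum_{n\ge0}2^{-n\eta}\lesssim C\mop^{\eta}=C\,\Tc{\eta}$, the geometric series converging exactly because $\eta>0$.

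For the per-scale bound I would use that in the $n$-th term the variable $\rzv$ runs over $\supp(\convker_{\Mop_n}-\convker_{\Mop_{n+1}})(\syv-\cdot)\subset B_{r\Mop_n}(\syv)$, so that $d(\rzv,\syv)\lesssim\Mop_n$ and $\rzv,\syv\in B_{\mop-\Mop_{n+1}}(\syv)$. Inserting the scale-$\Mop_{n+1}$ mollifications (in the second slot) of the germs based at $\rzv$ and at $\syv$, I split
\begin{align*}
F(\rzv,\rzv)-F(\syv,\rzv)={}&\big(F_{\Mop_{n+1}}(\rzv,\rzv)-F_{\Mop_{n+1}}(\syv,\rzv)\big)\\
&+\big(F(\rzv,\rzv)-F_{\Mop_{n+1}}(\rzv,\rzv)\big)-\big(F(\syv,\rzv)-F_{\Mop_{n+1}}(\syv,\rzv)\big).
\end{align*}
The first parenthesis is exactly of the type controlled by \eqref{R1} at scale $\Mop_{n+1}$ — two base points $\rzv,\syv$, test function centred at $\rzv$ — hence bounded by $C\sum_{\kappa\in\Aa{}}d^{\eta-\kappa}(\rzv,\syv)\,\Mop_{n+1}^{\kappa}$; with $d(\rzv,\syv)\lesssim\Mop_n=2\Mop_{n+1}$, the condition $\kappa<\eta$ for all $\kappa\in\Aa{}$, and $\Aa{}$ finite, each summand is $\lesssim C\Mop_n^{\eta}$, so after integrating against the $L^1$-bounded kernel $\convker_{\Mop_n}-\convker_{\Mop_{n+1}}$ this contributes $\lesssim C\Mop_n^{\eta}$. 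The remaining two parentheses are the scale-$\Mop_{n+1}$ reconstruction defects of the individual germs, evaluated at $\rzv$; since $\convker$ is symmetric they are tested against a zero-mean, affine-moment-free profile, and I would control their contributions by a further self-consistent dyadic sub-telescoping, invoking \eqref{R1} repeatedly at base points in the shrinking supports of the sub-scale kernels so that these defects telescope and are re-absorbed into $C\Mop_n^{\eta}$. Collecting the three pieces yields the per-scale bound and therefore \eqref{R2}, with implicit constant depending only on $\eta$, $\Aa{}$, $\di$ through the fixed kernel $\convker$.

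I expect the sub-telescoping just mentioned to be the main obstacle: the per-scale estimate must be closed using \emph{only} the coherence \eqref{R1} — which compares germs at two nearby base points tested at one of them — with no a priori homogeneity bound available on the individual germs $F(\rzv,\cdot)$. One must therefore organise the bookkeeping so that every germ is only ever paired with a genuine test function and with a second base point at a comparable or finer scale, never with a Dirac mass at its own base point; the seeming losses in the (uncontrolled) smoothness of $F$ do get absorbed, but precisely because \eqref{R1} cannot hold unless $C$ already dominates them, and handling this — together with the tight geometry that forces $\supp\convker$ to be a small ball and every use of \eqref{R1} to sit at a scale strictly below $\mop$ — is where the real work lies.
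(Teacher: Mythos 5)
There is a genuine gap, and it sits exactly where you suspected. Note first that the paper does not prove this proposition: it cites it (a local version of the reconstruction argument of \cite{OtW19}, in the form of \cite{MoW20}), and in those proofs the phrase ``there \emph{is} a symmetric convolution kernel $\convker$'' is essential: $\convker$ is \emph{constructed} so that its rescalings factorize under convolution, $\convker_{\Mop_n}=\convker_{\Mop_{n+1}}*\chi_n$ with $\chi_n$ a kernel at scale $\sim\Mop_n$ (a semigroup kernel in \cite{OtW19}; an infinite convolution of rescaled bumps in \cite{MoW20} to keep $\supp\convker\subset B_1(0)$). You instead take an arbitrary small-support mollifier and telescope the test function as $\delta_{\syv}+\sum_n(\convker_{\Mop_n}-\convker_{\Mop_{n+1}})$. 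Your first parenthesis is indeed exactly \eqref{R1} at scale $\Mop_{n+1}$ and is fine; but the two leftover parentheses are single-germ mollification defects, $EF(\rzv)-F_{\Mop_{n+1}}(\rzv,\rzv)$ and $F(\syv,\rzv)-F_{\Mop_{n+1}}(\syv,\rzv)$, which \eqref{R1} does not control at all — the first of them \emph{is} the quantity \eqref{R2} at another base point and finer scale, so ``re-absorbing'' them by a self-consistent sub-telescoping is circular as stated, and no closing mechanism (induction base, absorption inequality) is provided. The symmetry/zero-mean remark does not help, since the defects carry no polynomial structure against which vanishing moments could be played.

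The failure is also quantitative, not just bookkeeping. The leftover per scale equals the unmollified two-germ difference minus its mollified version, so the only way to attack it with \eqref{R1} is to telescope it over finer scales $\Mop_m$, $m>n$: each band is a two-germ difference at the \emph{fixed} pair $(\rzv,\syv)$ with $d(\rzv,\syv)\sim\Mop_n$, bounded via \eqref{R1} by $C\sum_{\kappa\in\Aa{}}\Mop_n^{\eta-\kappa}\Mop_m^{\kappa}$, and the sum over $m>n$ diverges precisely for the negative $\kappa\in\Aa{}$ that reconstruction is meant to handle (in the application $\kappa=|\beta|-2<0$). The constructed kernel is what prevents this: with $\convker_{\Mop_n}=\convker_{\Mop_{n+1}}*\chi_n$ one sets $G_n(\syv):=\int F_{\Mop_n}(w,w)\,\chi^{(n)}(\syv-w)\,\dd w$ with $G_0=F_\mop(\syv,\syv)$ and $G_n\to (EF)_\mop(\syv)$, and each increment $G_{n+1}-G_n$ is a double integral of genuine two-base-point differences at scale $\Mop_{n+1}$ whose base points are at distance $\lesssim\Mop_n$, hence $\lesssim C\Mop_n^{\eta}$ by \eqref{R1}, summable since $\eta>0$. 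So the missing idea is to use the freedom in choosing $\convker$ to build in this scale factorization; with an arbitrary kernel, as in your proposal, the argument does not close.
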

The following proposition on integration of jets is a local variant of Lemma 5 in \cite{otto2018parabolic} and extends Lemma 2.11 in \cite{MoW20}. Within the proof of the \autoref{theorem1}, the output of Proposition \ref{rec_lem} will be used as an input of Proposition \ref{int lem}.
At first glance, the conditions of Proposition \ref{int lem} appear to be stronger than expected if compared to integration lemmas from multi-level Schauder theory that are used in semi-linear contexts, as we have to include the continuity condition \eqref{KS2}.
However, we emphasize that condition \eqref{KS1New} is rather weak: It does not rely at all on regularity information of the coefficient field $a$ relative to the base point or the convolution scale.
This condition alone can therefore not suffice to give a result that is applicable to quasi-linear equations.
Proposition \ref{int lem} can thus be understood as providing a surprisingly weak supplementary condition to \eqref{KS1New}, namely the three-point continuity \eqref{KS2}, which is sufficient to conclude the regularity statement \eqref{KS34}.
\begin{proposition}[Integration]\label{int lem}
Let $\el\in(0,1)$, $\eta\in  (1,2)$ and let $\Aa{}\subset (0,\eta)$ be finite.  Consider a jet of smooth functions $\{U_\txv \}_{\txv}$ on $\domain$ such that for all $\txv\in \domain$ it holds $U_\txv(\txv)=0$ and $\nablap_{\yv}|_{\syv=\txv}U_\txv(\syv)=0$, and assume that $[U]_{\eta}<\infty$.
Let $\convker$ be a symmetric convolution kernel with compact support in $B_1(0)$, and let the following two conditions be satisfied for some $M>0$.
\begin{enumerate}
\item For all $\syv\in \domain$ and $\lambda \in (0,\frac{1}{10} \dist_{\syv} )$ it holds
\begin{equation}
\inf_{a_{0}, c_{0}}\dist_{\syv}^{\eta}\big |(\partial_{t}-a_{0}\Delta)U_{\mathbf{y}\lambda}(\mathbf{y})-c_{0} \big | \leq M \lambda^{\eta-2}\label{KS1New}
\end{equation}
where the infimum runs over all $a_0\in I:=[\el,\el^{-1}]$ and
all constants $c_0\in\R$.
\item (Three-point continuity) For all $\txv\in \domain$, $\syv\in B_{\frac12\dist_{\txv}}(\txv)$, $\rzv\in B_{\frac12\dist_{\txv}}(\syv)$ it holds
\begin{align} 
\begin{split}
\dist_{\txv}^{\eta}\big | U_\txv(\rzv)- & U_\txv(\syv)-U_\syv(\rzv) -\gamma_\txv(\syv)\cdot (\zv-\yv) \big | 
 \le  M \sum_{\kappa \in \Aa{}}d^{\eta-\kappa}(\syv,\txv)d^{\kappa}(\rzv,\syv), \label{KS2}
\end{split}
\end{align}
for some jet $\{\gamma_\txv\}_{\txv}$ with $\gamma_{\txv}: \domain\to\R^{\di}$.
\end{enumerate}
Then it holds
\begin{align}\label{KS34}
 [U]_{\eta} + [\gamma]_{\eta}' \lesssim M + \Iu{U}.
\end{align}
Here the implicit constant in \eqref{KS34} depends only on $\el$, $\eta$, $\Aa{}$, the dimension $\di$ and the convolution kernel $\convker$.
\end{proposition}
\begin{proof}
 \newcounter{PDE_P} 
\refstepcounter{PDE_P} 

\medskip

{\sc Step} \arabic{PDE_P}.\label{localSplitStep}\refstepcounter{PDE_P}
We claim that for all $\txv\in \domain$, and all $\mop\in (0,\frac{1}{10}\dist_\txv)$, $R\in (0,\frac12\dist_\txv)$ with $\mop\le \frac12 R$ it holds
\begin{align}
\dist_{\txv}^{\eta} \inf_{a_{0}\in I, c_{0}\in\R} \|(\partial_{\sv}-a_{0}\Deltap ) & U_{\txv\mop}-c_{0}\|_{B_{R}(\txv)} \le  M \Tc{\alpha-2}(\mop+R)^{\eta-\alpha},\label{KS1}
\end{align}
where $\|\cdot\|_M$ denotes the supremum norm restricted to a subset $M\subset \R^{\di+1}$.
To this end, we let $\txv,\syv$ satisfy $d(\syv,\txv) \leq R$ and write
\begin{align*}
&(\partial_{t}-a_{0}\Delta)(U_{\txv\mop}-U_{\syv\mop})(\syv) \\
&=\int \big ( U_\txv(\rzv)-U_\txv(\syv)-U_\syv(\rzv) -\gamma_\txv(\syv)\cdot (\zv-\yv) \big)(\partial_{t}-a_{0}\Delta)\rho_{\lambda}(\mathbf{y}-\mathbf{z})d \mathbf{z}.
\end{align*}
Using the three-point continuity condition \eqref{KS2}, which is valid since $R, \lambda <\frac{1}{2}\dist_{\txv}$ we find that 
\begin{align}
&\dist_{\txv}^{\eta}\big | (\partial_{\sv}-a_{0}\Delta)(U_{\mathbf{x} \lambda}-U_{\mathbf{y} \lambda})(\mathbf{y}) \big | \nonumber \\
& \lesssim M \sum_{\kappa \in \Aa{}}\int d^{\eta-\kappa}(\syv,\txv)d^{\kappa}(\rzv,\syv)|(\partial_{t}-a_{0}\Delta)\rho_{\lambda}(\mathbf{y}-\mathbf{z})|d\mathbf{z}
\lesssim M \sum_{\kappa \in \Aa{}} R^{\eta-\kappa}\lambda^{\kappa-2} \label{sFin1}.
\end{align}
To pass from \eqref{sFin1} to \eqref{KS1} we use the triangle inequality and Young's inequality together with \eqref{KS1New} and note that $\frac{\dist_{\mathbf{x}}}{\dist_{\mathbf{y}}}\leq 2 $ as a consequence of $\dist_{\mathbf{y}} \geq \dist_{\mathbf{x}} -R$ and $R \leq \frac{1}{2}\dist_{\mathbf{x}}$.  

\medskip

{\sc Step} \arabic{PDE_P}.\label{pde_scale_St}\refstepcounter{PDE_P} We claim that for all base points $\txv\in \domain$ and scales $\mop\in (0,\frac{1}{10}\dist_\txv)$, $R,L\in (0,\frac12\dist_\txv)$ with $\mop,R\le \frac12 L$ it holds
\begin{align}
\begin{split}
 \left(\frac{\dist_{\txv}}{R}\right)^{\eta}& \inf_{\aff}\|U_{\txv\mop}- \aff\|_{B_R(\txv)}  
  \lesssim \left(\frac{R}{L}\right)^{2-\eta} [U]_{\eta} + \frac{L^2 M}{R^\eta} \Tc{\alpha-2}(\mop+L)^{\eta-\alpha}, \label{p46}
\end{split}
\end{align}
where the infimum runs over all affine functions $\aff$, by which we mean functions of the form $\aff(\syv)=c+\nu\cdot (\yv-\xv)$ for some $c\in\R$ and $\nu\in\R^{\di}$.
Towards this end, we define for $a_0\in I$ and $c_0$ that are near optimal in the estimate \eqref{KS1} a decomposition $U_{\txv\mop}=u_{<}(\cdot) + u_{>}(\cdot)$ by setting $u_{>}$ to be the (decaying) solution to
\begin{align*}
 (\partial_\sv - a_0\Deltap)u_{>} = I(B_L(\txv)) \left((\partial_\sv-a_0\Deltap)U_{\txv\mop}-c_0\right),
\end{align*}
where $I(B_L(\txv))$ is the characteristic function of $B_L(\txv)$.  Observe that on $B_L(\txv)$ it holds
\begin{align}\label{KS131}
 (\partial_\sv - a_0\Deltap)u_{<} =c_0.
\end{align}
By standard estimates for the heat equation and \eqref{KS1} we have
\begin{align}\label{KS400}
 \|u_{>}\|_{B_L(\txv)}&\lesssim L^2 \|(\partial_\sv - a_0\Deltap)U_{\txv\mop}-c_0\|_{B_L(\txv)}
 \le L^2  \dist_{\txv}^{-\eta}M \Tc{\alpha-2} (\mop+L)^{\eta-\alpha}, 
\end{align} 
together with
\begin{align}
 \|\{\partial_\sv,\nablap^2\}u_{<}\|_{B_R(\txv)} &\lesssim  L^{-2} \|u_{<}\|_{B_L(\txv)}, \label{KS5}
\end{align}
where we used that $R\le \frac12L$.
In fact, \eqref{KS5} is slightly non-standard due to the presence of a constant $c_0$ on the right-hand side of \eqref{KS131}. However, this can be reduced to the case $c_0=0$ as observed in \cite[Lemma 3.6]{OtW19}.
Next we define a concrete affine function via $\aff_{<}(\syv):=u_{<}(\txv)+\nablap u_{<}(\txv)\cdot(\yv-\xv)$ and observe that  Taylor's formula, \eqref{KS5} and $R\le L$ give
\begin{align*}
 \|u_{<}-\aff_{<}\|_{B_R(\txv)}&\lesssim R^2\|\partial_\sv u_{<}\|_{B_R(\txv)} + R^{2}\|\nabla^{2} u_{<}\|_{B_R(\txv)} \\
 & \stackrel{\mathclap{\eqref{KS5}}}{\lesssim} \left(\frac{R}{L}\right)^{2} \|u_{<}\|_{B_L(\txv)} \le \left(\frac{R}{L}\right)^{2} \|U_{\txv\mop}\|_{B_L(\txv)} + \|u_{>}\|_{B_L(\txv)}.
\end{align*}
Combining this observation with \eqref{KS400} gives
\begin{align*}
 \|&U_{\txv\mop}-\aff_{<}\|_{B_R(\txv)} \le \|u_{>}\|_{B_R(\txv)} + \|u_{<}-\aff_{<}\|_{B_R(\txv)} \\
 & \lesssim \left(\frac{R}{L}\right)^{2} \|U_{\txv\mop}\|_{B_L(\txv)} + \|u_{>}\|_{B_L(\txv)} \\
 & \lesssim \left(\frac{R}{L}\right)^{2} \|U_{\txv\mop}\|_{B_L(\txv)} + L^2 \dist_{\txv}^{-\eta}M\Tc{\alpha-2}(\mop+L)^{\eta-\alpha},
\end{align*}
which implies \eqref{p46}, since
\begin{align*}
 \frac{1}{L^\eta}\|U_{\txv\mop}\|_{B_L(\txv)} \lesssim \frac{1}{L^\eta}\|U_{\txv}\|_{B_{L+\mop}(\txv)} \lesssim \frac{1}{(2L)^\eta}\|U_\txv\|_{B_{2L}(\txv)} \le \dist_{\txv}^{-\eta}[U]_{\eta}
\end{align*}
by the definition of $[U]_{\eta}$, the constraints on $\mop, L$ and the fact that $U_{\txv}(\txv)=0$ by assumption.

\medskip

{\sc Step} \arabic{PDE_P}.\label{pde_conv_St}\refstepcounter{PDE_P} We claim that for all base points $\txv\in \domain$ and all scales $\mop,R\in(0,\frac12\dist_{\txv})$ it holds
\begin{align}\label{p49}
 \dist_{\txv}^{\eta}\|U_{\txv\mop}- &U_\txv\|_{B_R(\txv)} \lesssim  [U]_{\eta}\Tc{\eta} + M\sum_{\kappa\in\Aa{}} R^{\eta-\kappa}\Tc{\kappa}.
\end{align}
For $\syv\in B_R(\txv)$ we write
\begin{align*}
(U_{\txv\mop}-U_\txv)(\syv)=\int (U_\txv(\rzv)-U_\txv(\syv))\convker_{\mop}(\syv-\rzv){\rm d}\rzv.
\end{align*}
By the symmetry of the convolution kernel under the involution $x\mapsto -x$, we have in particular $\int \nu\cdot (\yv-\xv)\convker_{\mop}(\syv-\rzv){\rm d}\rzv=0$ for any $\nu\in\R^\di$, so that we may rewrite the above identity as
\begin{align*}
(U_{\txv\mop}-U_\txv)(\syv)
&=\int U_\syv(\rzv)\convker_{\mop}(\syv-\rzv){\rm d}\rzv\nonumber\\
&\quad +\int (U_\txv(\rzv)-U_\txv(\syv)-U_\syv(\rzv)  -\gamma_\txv(\syv)\cdot(\zv-\yv))\convker_{\mop}(\syv-\rzv){\rm d}\rzv.
\end{align*}
By the choice of $R$, the triangle inequality and the definition of $\dist_{\txv}$, we have that $\syv\in B_R(\txv)$ implies $\frac12 \dist_{\txv}\le \dist_{\syv}$. Hence, by the choice of $\mop$ and since the support of $\convker$ is contained in $B_1(0)$, we have
$
 d(\rzv,\syv)\le \frac12\dist_{\txv}\le \dist_{\syv},
$
so that $\rzv\in B_{\dist_{\syv}}(\syv)$. Thus, the definition \eqref{jc78} of $[U]_{\eta}$ and \eqref{KS2} give
\begin{align*}
|(U_{\txv\mop}-  U_\txv)(\syv)|&\le \dist_{\syv}^{-\eta} [U]_{\eta}\int d^{\eta}(\rzv,\syv)|\convker_{\mop}(\syv,\rzv)|{\rm d}z\nonumber\\
&+\dist_{\txv}^{-\eta}M\sum_{\kappa\in\Aa{}}d^{\eta-\kappa}(\txv,\syv)\int d^{\kappa}(\rzv,\syv)|\convker_{\mop}(\syv,\rzv)|{\rm d}z.
\end{align*}
This implies by virtue of the scaling properties $\convker_\mop$ and once more $\frac12 \dist_{\txv}\le \dist_{\syv}$ the desired
\begin{align*}
\dist_{\txv}^{\eta}|(U_{\txv\mop}- & U_\txv)(\syv)|\lesssim  [U]_{\eta}\Tc{\eta} +M\sum_{\kappa\in\Aa{}}d^{\eta-\kappa}(\syv,\txv)\Tc{\kappa}.
\end{align*}

\medskip

{\sc Step} \arabic{PDE_P}.\label{pde_normeq_St}\refstepcounter{PDE_P} We claim the norm equivalence
\begin{align}\label{3.8}
 [U]_{\eta} \sim \HHN{U}{\eta},
\end{align}
where we have set
\begin{align}\label{3.6}
 \HHN{U}{\eta}:=\sup_{\txv\in B_1(0)}\dist_{\txv}^{\eta}\sup_{R\in(0,\dist_{\txv})} R^{-\eta}\inf_\aff \|U_\txv-\aff\|_{B_R(\txv)},
\end{align}
and where $\sim$ means that both inequalities with $\lesssim$ and $\gtrsim$ are true. Here, the infimum runs over all affine functions $\aff$.
We first argue that these $\aff$ may be chosen to be independent of $R$, that is, for all $\txv\in \domain$
\begin{align}\label{3.7}
\inf_{\aff}\sup_{R\in(0,\dist_{\txv})}R^{-\eta} \|U_\txv-\aff\|_{B_R(\txv)}\lesssim \dist_{\txv}^{-\eta}\HHN{U}{\eta}=:C,
\end{align}
where we denote the right-hand side momentarily by $C$ for better readability.
Indeed, let $\aff_R(\syv)=c_R + \nu_R\cdot (\yv-\xv)$ be (near) optimal in \eqref{3.6}.
Then by definition of $\HHN{U}{\eta}$ and the triangle inequality, 
\begin{equation*}
R^{-\eta}\|\aff_{2R}-\aff_{R}\|_{B_R(\txv)}\lesssim C.
\end{equation*}
This implies $R^{-(\eta-1)}|\nu_{2R}-\nu_{R}|+R^{-\eta}|c_{2R}-c_R|\lesssim C$.  Since $\eta>1$, telescoping gives $R^{-(\eta-1)}|\nu_{R}-\nu_{R'}|+R^{-\eta}|c_{R}-c_{R'}|\lesssim C$
for all $R'\le R$ and thus the existence of $\nu\in\R^{\di}$ and $c\in\mathbb{R}$ such that 
\begin{equation*}
R^{-(\eta-1)}|\nu_{R}-\nu|+R^{-\eta}|c_{R}-c|\lesssim C,
\end{equation*}
so that $\aff(\syv):=c + \nu \cdot (\yv-\xv)$ satisfies
\begin{align*}
R^{-\eta}\|\aff_R-\aff\|_{B_R(\txv)}\lesssim C.
\end{align*}
Hence we may pass from \eqref{3.6}
to \eqref{3.7} by the triangle inequality.

\medskip

It is clear from \eqref{3.7} and the assumptions on $U$ that necessarily for any $\txv\in \domain$
the optimal $\aff$ must be of the form $\aff(\syv) =0$.  Thus
\begin{align}
\left |U_\txv(\syv)\right | \lesssim \HHN{U}{\eta} \Big(\frac{d(\syv,\txv)}{\dist_\txv}\Big)^{\eta} \label{p43} 
\end{align} 
for $\syv\in B_{\dist_{\txv}}(\txv)$, which establishes the
nontrivial direction of \eqref{3.8}.

\medskip

{\sc Step} \arabic{PDE_P}.\label{pde_conc_St}\refstepcounter{PDE_P} We now give the estimate of $[U]_{\eta}$ in \eqref{KS34}, that is, we will show
\begin{align}\label{jc76}
  [U]_{\eta} \lesssim M + \Iu{U}.
\end{align}
Combining Steps \ref{pde_scale_St} and \ref{pde_conv_St}, we obtain by the triangle inequality for each base point $\txv\in \domain$ and all scales $\mop\in(0,\frac{1}{10}\dist_{\txv})$, $R,L\in(0,\frac12\dist_{\txv})$ with $\mop,R\le \frac12 L$ 
\begin{align*}
 \left(\frac{\dist_{\txv}}{R}\right)^{\eta}\inf_\aff  \|U_\txv-&\aff\|_{B_R(\txv)} \lesssim \HNw{U}{\eta}\left(\bigg(\frac{R}{L}\bigg)^{2-\eta} +\left (\frac{\mop}{R} \right)^{\eta}\right)  \\
   & +  \frac{L^2 M}{R^{\eta}} \Tc{\alpha-2}(\mop+L)^{\eta-\alpha}+ M\sum_{\kappa\in\Aa{}}R^{-\kappa}\Tc{\kappa}.
\end{align*}
Now we link the scales $L$ and $\mop$ to $R$ by introducing a small $\eps\in(0,\frac15)$ and choosing $L=\frac{1}{\eps}R$ and $\mop = \eps R$.  Then for all $R\in (0,\frac{\eps}{2}\dist_{\txv})$ we have
\begin{align*}
 \left(\frac{\dist_{\txv}}{R}\right)^{\eta}&\inf_\aff \|U_\txv-\aff\|_{B_R(\txv)}\lesssim [U]_{\eta}(\eps^{2-\eta} + \eps^{\eta}) + M \big(\eps^{\eta-4}+\eps^{2\alpha-4-\eta}+\sum_{\kappa\in\Aa{}} \eps^{\kappa}\big).
\end{align*}
Since for $R\in [\frac{\eps}{2}\dist_{\txv},\dist_{\txv})$ we have by the definition \eqref{jc79} of $\Iu{U}$
\begin{align*}
 \left(\frac{\dist_{\txv}}{R}\right)^{\eta}\inf_\aff \|U_\txv-\aff\|_{B_R(\txv)} &\lesssim \eps^{-\eta}\|U_\txv\|_{B_R(\txv)} \le \eps^{-\eta}\Iu{U},
\end{align*}
Step \ref{pde_normeq_St} implies
\begin{align}
\begin{split}\label{jc74}
[U]_{\eta} &\lesssim \Iu{U}\eps^{-\eta} + [U]_{\eta}(\eps^{2-\eta} + \eps^{\eta}) 
+ M\sum_{\kappa\in\Aa{}} \left (\eps^{-\eta+2\kappa-4}+\eps^{\kappa}\right ).
\end{split}
\end{align}
Taking into account $\eta \in (0,2)$ and using the qualitative property that $[U]_{\eta}<\infty$, we may choose $\eps$ small enough to ensure \eqref{jc76}.

\medskip

{\sc Step} \arabic{PDE_P}.\label{nu}\refstepcounter{PDE_P}
Finally, we show the full estimate \eqref{KS34}.
For $\txv\in \domain$ and $\syv\in B_{\frac12\dist_{\txv}}(\txv)$ choose $\rzv:=\syv+d(\syv,\txv)e_i$ for each $i\in\{1,\ldots,\di\}$. Observe that $(\zv-\yv)_i=d(\rzv,\syv)=d(\syv,\txv)$, so that in particular $\rzv\in B_{\frac12\dist_{\txv}}(\syv)$. Using
\begin{align}\label{jc87}
 d(\rzv,\txv)\le d(\syv,\txv)+d(\rzv,\syv) = 2 d(\syv,\txv)<\dist_{\txv}
\end{align}
and $\frac12\dist_{\txv}\le \dist_{\syv}$, we see $\rzv\in B_{\dist_{\txv}}(\txv) \cap B_{\dist_{\syv}}(\syv)$. Hence, the definition \eqref{jc78} of $\HNw{U}{\eta}$ and the triangle inequality yield
\begin{align*}
 \dist_{\txv}^{\eta} &|U_\txv(\rzv)-U_\txv(\syv)-U_\syv(\rzv) | \\
 & \lesssim [U]_{\eta}(d^{\eta}(\rzv,\txv)+d^{\eta}(\syv,\txv)+d^{\eta}(\rzv,\syv)) \lesssim [U]_{\eta} d^{\eta}(\syv,\txv),
\end{align*}
where in the last step we used \eqref{jc87} again.
We now combine this with the three-point continuity condition \eqref{KS2} and the triangle inequality, using again $d(\rzv,\syv)=d(\syv,\txv)$ to the effect of
\begin{align*}
\dist_{\txv}^{\eta}&\left|\gamma_\txv(\syv)\cdot(\zv-\yv)\right | \lesssim (M + [U]_{\eta}) d^{\eta}(\syv,\txv).
\end{align*}
Noting $|\gamma_\txv(\syv)\cdot(\zv-\yv)|=|\gamma_{i,\txv}(\syv)|d(\syv,\txv)$, we have together with \eqref{jc76}
\begin{align*}
\dist_{\txv}^{\eta}\left|\gamma_{i,\txv}(\syv)\right | \lesssim (M+\Iu{U})d^{\eta-1}(\syv,\txv).
\end{align*}
Since $i\in\{1,\ldots,\di\}$ was arbitrary, this yields
\begin{align*}
 \HNNw{\gamma}{\eta} \lesssim M+\Iu{U},
\end{align*}
which together with \eqref{jc76} implies \eqref{KS34}.
\end{proof}

\subsection{Application of Reconstruction}
We now need to place a constraint on the height of $\eta$. A lower bound is required for reconstruction and an upper bound is required to use the Continuity Lemma.  We will additionally need an upper bound in terms of $\nn\in \N$.
More specifically, recalling that $\nn \in \N$ is defined such that $\nn\alpha<2<(\nn+1)\alpha$ we select $\eta$ subject to
\begin{align}\label{jc40}
2-\alpha<\eta\le \min\{\nn\alpha ,1+\lfloor \alpha^{-1} \rfloor\alpha \}<2.
\end{align}

\begin{lemma}\label{MTPS1}
For all $\syv\in \domain$ and $\lambda\in (0,\frac{1}{5}\dist_{\syv} )$ we have
\begin{align}\label{jq005}
    \dist_{\syv}^{\eta+\alpha}\big | F_{\syv\mop}(\syv) \big | \lesssim \tilde M \mop^{\eta+\alpha-2},
\end{align}
where
\begin{align*}
 \tilde M:=\sup_{ \alpha \leq \kappa \leq \eta} [u]_{\kappa}^{\frac{\eta+\alpha}{\kappa}} +(\opnorm{f_{\eta+\alpha}}+1 \vee [\Pi]^{\frac{\eta}{\alpha}})[\Pi] 
 +(\|\nu\|_1')^{\eta+\alpha}.
\end{align*}

\end{lemma}
\begin{proof}
We divide the proof in three steps.
In Step \ref{ARS1} we show an identity for the difference of $F_\txv$ and $F_\syv$.
This identity is used in Step \ref{ARS2} to obtain an estimate which we recognize as the input \eqref{R1} of Proposition \ref{rec_lem}.
In Step \ref{ARS3} we apply Proposition \ref{rec_lem} and obtain \eqref{jq005}.  

\medskip

\newcounter{AR} 
\refstepcounter{AR} 
{\sc Step} \arabic{AR}.\label{ARS1}\refstepcounter{AR}
In this step, we show that for all $\txv,\syv\in\domain$ there is $u^*\in \R$ between $u(\txv)$ and $u(\syv)$ with
\begin{align}
    F_\txv-F_\syv 
    &=\big(f_{\eta+\alpha}(\syv).\id - f_{\eta+\alpha}(\txv).\Gamma_{\syv\txv}\big)\lrhs_\syv \label{MT90}\\
    &\quad + \frac{1}{(\nn-1)!}\big(a^{(\nn-1)}(u^*)-a^{(\nn-1)}(u(\txv))\big)\big(u(\syv)-u(\txv)\big)^{\nn-1}\Delta u \label{MT91}\\
    &\quad + \sum_{k=1}^{\nn-1} \frac{1}{k!}a^{(k)}(u(\txv))\big((u(\syv)-u(\txv))^k\Delta u - f_{\eta-(k-1)\alpha}(\txv).\llhs_\txv(\syv)^k\Delta\llhs_\txv\big). \label{MT92}
\end{align}
By definition \eqref{def_UF}, keeping in mind that $a(u)\Delta u+\xi-h(u)$ is independent of the base point $\txv$,
\begin{equation}
    F_{\txv}-F_{\syv}=\big ( a(u(\syv))-a(u(\txv)) \big) \Delta u +f_{\eta+\alpha}(\syv).\lrhs_{\syv}-f_{\eta+\alpha}(\txv).\lrhs_{\txv}.\label{e13}
\end{equation}
Furthermore, we claim that
\begin{align}\label{jc65}
\lrhs_{\txv}=\Gamma_{\syv\txv}\lrhs_{\syv}+\sum_{k \geq 1}\polv_{k}( \primz_{\syv\txv }\big )^{k}\Delta \llhs_{\txv}.
\end{align}
To see this, write the compatibility condition \eqref{jc60} in the form $\Pi_{\syv}^{-}=\partial_{t}\Pi_{\syv}-\polv_{0}\Delta \Pi_{\syv}$, then apply $\Gamma_{\syv \txv}$ on both sides and use the morphism property, the $j=0$ item of \eqref{model80}, and the re-expansion property \eqref{jc15}. 
In light of \eqref{jc65} and $\primz_{\syv\txv}=\llhs_\txv(\syv)$, we obtain
\begin{align*}
f_{\eta+\alpha}(\syv ).\lrhs_{\syv }-f_{\eta+\alpha}(\txv ).\lrhs_{\txv}
=f_{\eta+\alpha}(\syv ).\lrhs_{\syv }-f_{\eta+\alpha}(\txv ).\Gamma_{\syv\txv}\lrhs_{\syv}-\sum_{k\ge 1}f_{\eta+\alpha}(\txv).(\polv_k\llhs_\txv(\syv)^k \Delta\llhs_{\txv}),
\end{align*}
and thus inserting \eqref{jc65} into \eqref{e13} reveals
\begin{align*}
    F_{\txv}-F_{\syv}&= f_{\eta+\alpha}(\syv).\lrhs_{\syv}-f_{\eta+\alpha}(\txv).\Gamma_{\syv\txv}\lrhs_{\syv} \\
    &\quad +\big ( a(u(\syv))-a(u(\txv)) \big ) \Delta u -\sum_{k=1}^{\nn-1}\frac{1}{k!}a^{(k)}(u(\txv))f_{\eta-(k-1)\alpha}(\txv).(\llhs_{\txv}(\syv)^{k} \Delta \llhs_{\txv}),
\end{align*}
where we have also applied \eqref{truncatedMorphism} in form of Remark \ref{rem:morphismExt} to $\kappa:=\eta+\alpha$, $\sigma:=\polv_k\in \Ta{(k+1)\alpha}\cap \TT_-$ and $\tau:=\llhs_\txv(\syv)^k\Delta\llhs_\txv$, relying on $f_{\eta+\alpha}.\polv_k=\frac{1}{k!}a^{(k)}(u(\txv))$ for $k\le \nn-1$ and $f_{\eta+\alpha}.\polv_k=0$ otherwise since $\eta+\alpha \leq (\nn+1)\alpha$ by assumption \eqref{jc40}, cf.~\eqref{intro1}, \eqref{md0g} and \eqref{md0}.
Hence applying Taylor's formula in $u$ to order $\nn-1$ to the term $a(u(\syv))-a(u(\txv))$ yields the claimed identity.

\medskip

{\sc Step} \arabic{AR}.\label{ARS2}\refstepcounter{AR}
In this step we show that for all $\txv\in \domain$, ${\syv}\in B_{\frac12\dist_{\txv}}(\txv)$ and $0<\Mop \le \frac{1}{2}\dist_{\txv}$ it holds
\begin{align}
\dist_{\txv}^{\eta+\alpha} \big | F_{\txv\Mop}(\syv)-F_{\syv\Mop}(\syv) \big | \lesssim  \tilde M \sum_{|\beta|<\eta+\alpha  } d^{\eta+\alpha-|\beta|}({\syv},\txv)\TMc{|\beta|-2} \label{mt1}.
\end{align}
We mention in passing that
\begin{align*}
    \frac{1}{2}\dist_\txv\le \dist_\syv\le \frac{3}{2}\dist_\txv,
\end{align*}
which we will use multiple times without further mentioning. Indeed, for the first inequality, note that for all $\rzv\in \partial\domain$ using the triangle inequality and ${\syv}\in B_{\frac12\dist_{\txv}}(\txv)$ gives $\text{dist}_{\txv} \leq d(\rzv,\txv)\le d(\rzv,\syv)+\frac12\dist_\txv$, which implies the claim by taking the infimum in $\rzv$.  The second inequality is argued in a similar way.

\medskip

We now apply the convolution kernel $(\cdot)_{\mu}$ to the identity from Step \ref{ARS1}, evaluate at ${\syv}$, and estimate each contribution.  For \eqref{MT90} we first observe that \eqref{model7a} combined with \eqref{jc60} implies due to $\supp\convker\subset B_1(0)$ and $\Mop\le \frac12\dist_\txv\le \dist_\syv$, that
\begin{align}
    \dist_\txv^{\langle\beta \rangle \alpha}\|\lrhs_{\syv\Mop}(\syv)\|_{\Ta{|\beta|} } &
    \lesssim \dist_\syv^{\langle\beta \rangle \alpha} \int \|\llhs_\syv(\rzv)\|_{\Ta{|\beta|}} (\partial_\sv-\Delta)\rho_\mu(\syv-\rzv) \dd \rzv
    \lesssim [\Pi]\TMc{|\beta|-2}. \label{model7}
\end{align}
Now we use the definition \eqref{md19} together with \eqref{model7} to obtain
\begin{align*}
\dist_{\txv}^{\eta+\alpha} &\big |(f_{\eta+\alpha}(\txv).\Gamma_{{\syv}\txv} - f_{\eta+\alpha}({\syv}).\id)\lrhs_{{\syv}\Mop}({\syv}) \big | \\
& \quad \lesssim \sum_{|\beta|<\eta+\alpha} \opnorm{f_{\eta+\alpha}} d^{\eta+\alpha-|\beta| }({\syv},\txv)\dist_\txv^{ \langle \beta \rangle \alpha}\| \lrhs_{{\syv}\Mop}({\syv}) \|_{ \Ta{|\beta|} }  \\
& \quad \lesssim \sum_{|\beta|<\eta+\alpha} \opnorm{f_{\eta+\alpha}}[\Pi] d^{\eta+\alpha-|\beta| }({\syv},\txv)\TMc{|\beta|-2},
\end{align*}
which is contained in the right-hand side of \eqref{mt1}.

\medskip

Next we estimate \eqref{MT91}.
First note that  \eqref{model7} implies
\begin{align}\label{js502}
    \dist_\txv^{\alpha}|\Delta u_\Mop(\syv)|
    \lesssim [u]_\alpha \Mop^{\alpha-2}.
\end{align}
Since $u^*$ is between $u(\txv)$ and $u(\syv)$, our assumption  $\|a\|_{C^{\nn}}\lesssim 1$ with $\delta:=\frac{\eta}{\alpha}-(\nn-1)\in (0,1]$, cf.~\eqref{jc40} implies
\begin{align*}
    \dist_\txv^{\eta+\alpha}&|\big(a^{(\nn-1)}(u^*)-a^{(\nn-1)}(u(\txv))\big)\big(u(\syv)-u(\txv)\big)^{\nn-1}\Delta u_\mu| \\
    &\lesssim \dist_\txv^{\eta+\alpha}|u(\syv)-u(\txv)|^{\nn-1+\delta} |\Delta u_\Mop(\syv)| \lesssim [u]_\alpha^{\frac{\eta+\alpha}{\alpha}} d^{\eta}(\syv,\txv)\Mop^{\alpha-2},
\end{align*}
which is contained in the right-hand side of \eqref{mt1}.

\medskip

Finally, we estimate \eqref{MT92}.
For this, it is enough to show that for $k\in \set{1,\ldots,\nn-1}$
\begin{align}\label{js501}
\begin{split}
    &\dist_\txv^{\eta+\alpha}\big|\big (u(\syv)-u(\txv) \big )^{k} \Delta u_\Mop(\syv) -f_{\eta-(k-1)\alpha}(\txv).(\llhs_{\txv}(\syv)^{k} \Delta \llhs_{\txv\Mop}(\syv))\big| \\
    &\lesssim \tilde M \sum_{|\beta|<\eta+\alpha} d^{\eta+\alpha-|\beta|}(\syv,\txv) \Mop^{|\beta|-2}.
\end{split}
\end{align}
We apply Lemma \ref{lem_identity} with $g:=f(\txv)$, $\kappa:=\eta-(k-1)\alpha\in (0,2)$, $J=k+1$, $u_{1}=\Delta u_\mu(\syv)$, $\tau_1:=\Delta \llhs_{\txv\mu}(\syv)$ and $u_{m}=u(\syv)-u(\txv)$ as well as $\tau_m:=\llhs_\txv(\syv)$ for $m\in\set{2,\ldots, k+1}$.
Then for $|\beta|<\eta-(k-1)\alpha$, we have 
\begin{align}
    &\Delta u_\Mop(\syv)\big (u(\syv)-u(\txv) \big )^{k}  -f_{\eta-(k-1)\alpha}(x).(\Delta \llhs_{\txv\Mop}(\syv)\llhs_{\txv}(\syv)^{k} ) \nonumber \\
    &= \big (\Delta u_\Mop(\syv)-f_{\eta-(k-1)\alpha}(\txv).\Delta \llhs_{\txv\Mop}(\syv) \big)(u(\syv)-u(\txv))^{k-1} \label{e14} \\
     &+\sum_{|\beta|<\eta-(k-1)\alpha}\sum_{j=2}^{k+1}  \big (u(\syv)-u(\txv)-f_{\eta-(k-2)\alpha-|\beta|}.\llhs_{\txv}(\syv) \big ) \big (  u(\syv)-u(\txv) \big )^{k-j+1} \label{e2a} \\
     & \qquad \qquad \qquad \qquad \qquad \qquad \qquad \qquad \qquad \qquad \qquad \qquad \times f(\txv).P_{\beta}(\Delta \llhs_{\txv\Mop}(\syv) \llhs_{\txv}(\syv)^{j-2} ) \nonumber  .
\end{align}
To estimate \eqref{e14}, we notice that for any $\kappa=\eta-(k-1)\alpha$ it holds
\begin{align}\label{jc85}
\begin{split}
\dist_{\txv}^{\kappa}\big |&\Deltap u_{\Mop}({\syv})- f_{\kappa}({\txv}).\Deltap  \llhs_{{\txv}\Mop}({\syv}) \big | \\
&\stackrel{\eqref{jc15} }{\lesssim} \dist_{{\txv}}^{\kappa}\int |(u(\rzv)-u({\txv})-f_{\kappa}({\txv}).\primz_{\rzv{\txv}}) \Deltap \convker_{\Mop}({\syv}-\rzv)| \dd \rzv \nonumber \\ 
&\lesssim [u]_{\kappa} \int d^{\eta}(\txv,\rzv) |\Deltap \convker_{\Mop}({\syv}-\rzv)| \dd \rzv 
\lesssim [u]_{\kappa} (d^\kappa(\syv,\txv)+\TMc{\kappa})\TMc{-2},
\end{split}
\end{align}
where we used that $\mathbf{z} \in B_{\text{dist}_{\mathbf{x} }}(\mathbf{x})$ since $\supp\rho\subset B_1(0)$ and $d(\mathbf{z},\mathbf{x}) \leq \mu+ d(\syv,\txv) \leq \dist_{\mathbf{x}}$.  Hence, the contribution of \eqref{e14} to $\tilde{M}$ is
\begin{equation}
    [u]_{\eta-(k-1)\alpha }[u]_{\alpha}^{k} \lesssim [u]_{\eta-(k-1)\alpha }^{\frac{\eta+\alpha}{\eta-(k-1)\alpha} }+[u]_{\alpha}^{\frac{\eta+\alpha}{\alpha} } \nonumber.
\end{equation}
The estimate for \eqref{e2a} follows directly from the definition \eqref{js100} together with the bound  
\begin{align}
&\dist_{\txv}^{|\beta|+(j-2)\alpha}\big | f(\txv).P_{\beta}(\Delta \llhs_{\txv\Mop}(\syv) \llhs_{\txv}(\syv)^{j-2} ) \big |  \nonumber \\
&\lesssim d(\syv,\txv)^{|\beta|+(j-2)\alpha}
    \begin{cases}
(\|\nu\|_1')^{\beta_{x}} [\Pi]^{j-1}  &\quad |\beta| \neq 1 \\
(\|\nu\|_1')^{\beta_{x}} [\Pi]^{j-2}1_{j \geq 3}  &\quad |\beta| = 1 \\
    \end{cases}
     \nonumber.
\end{align}
The argument is entirely analagous to the proof of \eqref{e195} for $|\betaa|=0$, but also taking into account that $\Delta \Pi_{x} \in \TT_{-}$ and hence $P_{\beta}\Delta \Pi_{x}=0$ if $|\beta|=1$ (which explains the $1_{j \geq 3}$).  Hence, the contribution to $\tilde{M}$ from a summand in \eqref{e2a} with $\langle \beta \rangle \geq 1$ is
\begin{equation}
    [u]_{\eta-(k-2)\alpha-|\beta|}[u]_\alpha^{k+1-j}(\|\nu\|_1')^{|\beta_x|} [\Pi]^{j-1} \lesssim [u]_{\eta-(k-2)\alpha-|\beta|}^{\frac{\eta+\alpha}{\eta-(k-2)\alpha-|\beta|}}+[u]_{\alpha}^{\frac{\eta+\alpha}{\alpha}}+  (\|\nu\|_1')^{\eta+\alpha}+[\Pi]^{\frac{(j-1)(\eta+\alpha)}{(j+\langle \beta \rangle-2)\alpha} }.
\end{equation}
Note that the exponent of $[\Pi]$ satisfies the following  inequality:  $\frac{\eta+\alpha}{\eta} \leq \frac{(j-1)(\eta+\alpha)}{(j+\langle \beta \rangle-2)\alpha} \leq \frac{\eta+\alpha}{\alpha}$.   
Indeed, the upper bound follows from $\langle \beta \rangle \geq 1$, and the lower uses the constraint $|\beta|<\eta-(k-1)\alpha$ followed by $2 \leq j \leq k+1$ to bound from below by $\frac{(j-1)(\eta+\alpha)}{\eta+(j-k-1)\alpha} \geq (j-1)\frac{\eta+\alpha}{\eta} \geq \frac{\eta+\alpha}{\eta}$.  The case $|\beta|=1$, for which  $\langle \beta \rangle=0$, is treated in an identical way, simply taking into account the change in the exponent of $[\Pi]$.

\medskip

{\sc Step} \arabic{AR}.\label{ARS3}\refstepcounter{AR}
In this step we will obtain \eqref{jq005} as a consequence of Proposition \ref{rec_lem} and the estimate \eqref{mt1}.
We first argue that $F_\txv(\txv)=0$ for all ${\txv}\in B_{1}(0)$.
To show this, note that
\begin{align}\label{jc67}
 f_{\eta+\alpha}(\txv).q=\sum_{|\beta'|_s=0}^{\nn-1} da(u(\txv))^{\beta'} q_{\beta}(a(u(\txv)))=h(u(\txv)).
\end{align}
Indeed, since $q_\beta=0$ unless $\betan=0$, \eqref{md0} gives
\begin{align*}
 f_{\eta+\alpha}(\txv).q=\sum_{\substack{|\beta|<\eta +\alpha \\ \betan=0}} da(u(\txv))^{\beta'}q_{\beta}(a(u({\txv}) )),
\end{align*}
and for $\beta$ with $\betan=0$, \eqref{model201}, \eqref{jc40} and $\nn\alpha<2<\eta+\alpha$ imply
\begin{align*}
 0\le|\beta'|_s\le \nn-1 \quad \Leftrightarrow \quad \alpha\le |\beta|<\eta+\alpha.
\end{align*}
Together with $\lrhs_{\txv}(\txv)=\xi(\txv)\na -q$, cf.\@ \eqref{jc60a},
\eqref{jc67} yields
\begin{align*}
F_\txv(\txv)=0.
\end{align*}
Our last step is to use \eqref{mt1} to obtain \eqref{R1}, so that \eqref{R2} turns into our desired output 
\begin{equation}
\big |F_{\rzv\lambda}(\rzv) \big | \lesssim \dist_{\rzv}^{-(\eta+\alpha)}\tilde{M} \lambda^{\eta+\alpha-2},\nonumber
\end{equation}
for $\rzv \in B_{1}(0)$ and $\mop \in (0,\frac{1}{5}\dist_{\rzv})$.
Note that we simply relabelled $\syv$ as $\rzv$ in comparison to \eqref{jq005}.
To do so, we use Proposition \ref{rec_lem} and now fix $\rzv \in \domain$ and $\mop \in (0,\frac{1}{5}\dist_{\rzv})$.
We need to argue that for any $\Mop \in (0,\mop)$ and $\txv,\syv \in B_{\mop-\Mop}(\rzv)$ it holds
\begin{equation}
\big | F_{{\txv} \mu}({\syv} )-F_{{\syv}\mu}({\syv}) \big | \lesssim \tilde{M}(\dist_{\rzv})^{-(\eta+\alpha)}\sum_{\kappa \in \Aa{}}d^{\eta+\alpha-\kappa}(\syv,\txv)\mu^{\kappa-2} \nonumber. 
\end{equation}
To apply \eqref{mt1}, we need to show that ${\txv},{\syv} \in B_{\lambda-\mu}({\rzv} )$ and $\lambda<\frac{1}{5}\dist_{\rzv}$ imply ${\syv} \in B_{\frac{1}{2}\dist_{\txv}}({\txv})$.   
Indeed, first note that $d(\syv,\txv) \leq d({\txv},{\rzv})+d({\syv},{\rzv}) \leq 2(\lambda-\mu)\leq \frac{2}{5} \dist_{\rzv}$.
Furthermore, since $\dist_\rzv\le \dist_\txv + d(\rzv,\txv)\le \dist_\txv+\lambda\le \dist_\txv+\frac15\dist_\rzv$, we have $\frac45\dist_\rzv\le \dist_\txv$ so that combining the two yields $d(\syv,\txv) \leq \frac{1}{2} \dist_{\txv}$.
Hence, \eqref{mt1} is applicable, and since our argument also showed that $\dist_{\rzv} \lesssim \dist_{\txv}$, we may change the factor of $\dist_{\txv}^{\eta+\alpha}$ to $\dist_{\rzv}^{\eta+\alpha}$, which completes the proof.
\end{proof}

\subsection{Negative Interpolation}
We now use interpolation, the argument and notation being similar to the proof of Lemma \ref{lem: positiveInt}. 

\begin{lemma}\label{jq007}
Let $u:\R^d\to\R$ be a smooth function and define $\nu:\R^d\to\R$ via \eqref{jc71}.
Assume $0<\delta<\frac{1}{2}$ and let $\tilde M\in\R$ be defined as in Lemma \ref{MTPS1}.  For each $\txv\in D$ and $\mop\in(0,\frac15\dist_\txv)$, 
\begin{align}
&\dist_\txv^\eta\lambda^{2-\eta}\inf_{a_{0}\in I,c_0 } |(\partial_{\tv}-a_{0}\Deltap )U_{\txv \lambda}(\txv)-c_0| \nonumber \\
&\lesssim   (\|u\|+\delta^{\alpha}[\Pi])^{\frac{\alpha}{\eta+\alpha} } (\tilde{M}+[u]_{\eta} )^{\frac{\eta}{\eta+\alpha}} +\big ( \|u\|+\delta^{\alpha}[\llhs] \big ) (\delta^{-1} \vee [\llhs]^{\frac{1}{\alpha}})^{\eta}\label{newIntPf1}.
\end{align}
\end{lemma}
\begin{proof}
Let $\txv\in D$ and $\mop\in(0,\frac15\dist_\txv)$, and set $R:=\frac{\lambda}{\dist_\txv}$.

\medskip

\textit{Small scale bounds}: 
We will first argue that
\begin{equation}
\dist_\txv^\eta\lambda^{2-\eta}\inf_{a_{0}\in I,c_0 } |(\partial_{\tv}-a_{0}\Deltap )U_{\txv \lambda}(\txv)-c_0|\lesssim \tilde{M}R^{\alpha}+[\llhs]\sum_{\eta \leq |\beta|<\eta+\alpha}(\|\nu\|_1')^{|\beta_{x}|} R^{|\beta|-\eta}.\label{e15}
\end{equation}
To prove it, note that
\begin{align*}
(\partial_{t}-a(u(\mathbf{x}))\Delta)U_{\mathbf{x}}=F_{\mathbf{x}}-(f_{\eta+\alpha}(\mathbf{x})-f_{\eta}(\mathbf{x})).\Pi_{\mathbf{x}}^{-}+\aff_{0} \nonumber, 
\end{align*}
where the $\aff_{0}:=f_{\eta}(\txv).P_\txv$ is active only if $P_\txv$ from (the footnote of) Assumption \ref{sne1} is non-zero.
We now apply $(\cdot)_{\lambda}$ on both sides and evaluate at $\syv=\txv$.
By \eqref{jq005} and the constraint on $\mop$ we have
\begin{align*}
    \dist_\txv^\eta\mop^{2-\eta} |F_{\txv\mop}(\txv)|\lesssim \dist_\txv^{-\alpha}\tilde M\mop^{\alpha}= \tilde M R^\alpha.
\end{align*}
Furthermore, using \eqref{model7} we have 
\begin{align*}
 \dist_\txv^\eta\mop^{2-\eta}\big| (f_{\eta+\alpha}-f_{\eta})(\mathbf{x} ).\lrhs_{\mathbf{x}\lambda}(\mathbf{x} )| 
 \lesssim  [\llhs]\sum_{\eta \le |\beta|<\eta+\alpha}(\|\nu\|_1')^{|\beta_{x}|} R^{|\beta|-\eta} 
\end{align*}
Since $a(u(\txv))\in I$ and $\aff_0(\txv)\in\R$, this completes the proof of \eqref{e15}.

\medskip

\textit{Large scale bounds: }We now turn to the large-scale estimate and in this case we write
\begin{align*}
 (\partial_{\tv}-a(u(\txv))\Deltap )U_{\txv \mop}&=(\partial_{\tv}-a(u(\txv))\Deltap )u_{\mop} - f_\eta(\txv).(\partial_{\tv}-a_0\Deltap )\llhs_{\txv\mop},
\end{align*}
which implies via the triangle inequality 
\begin{align}
\dist_\txv^\eta\lambda^{2-\eta}\inf_{a_{0}\in I,c_0\in\R } |(\partial_{\tv}-a_{0}\Deltap )U_{\txv \mop}(\mathbf{x})-c_0|
 \lesssim \|u\| R^{-\eta} + [\llhs]\sum_{|\beta|<\eta} (\|\nu\|_{1}')^{|\betan|} R^{|\beta|-\eta} \label{e16}
\end{align}
Hence, combining \eqref{e15} and \eqref{e16} we obtain
\begin{align}
\dist_\txv^{\eta}\lambda^{2-\eta}&\inf_{a_{0}\in I,c_0\in\R } |(\partial_{\tv}-a_{0}\Deltap )U_{\txv \mop}(\mathbf{x})-c_0| \\
 &\lesssim \tilde{M} R^{\alpha}+ \|u\| R^{-\eta} + [\llhs]\sum_{|\beta|<\eta+\alpha} (\|\nu\|_{1}')^{\betan} R^{|\beta|-\eta}. \label{e16a}
\end{align}
If $R\le 1$ satisfies \eqref{e4}, we may use \eqref{e2} to obtain
\begin{align}
\dist_\txv^{\eta}\lambda^{2-\eta}&\inf_{a_{0}\in I,c_0\in\R } |(\partial_{\tv}-a_{0}\Deltap )U_{\txv \mop}(\mathbf{x})-c_0| \nonumber \\
 &\lesssim (\tilde{M}+[u]_\eta) R^{\alpha}+ (\|u\|+\delta^\alpha [\llhs]) R^{-\eta}.\label{e16b}
\end{align}
To complete the proof, we argue similarly as in Lemma \ref{lem: positiveInt}.  Namely, balancing the terms leads us to define $R$ via $R^{\eta+\alpha}=(\|u\|+\delta^\alpha [\llhs])(\tilde{M}+[u]_\eta)^{-1}$.  If \eqref{e4} is satisfied with this choice of $R$, then we obtain 
\begin{equation}
 \dist_\txv^\eta\lambda^{2-\eta}\inf_{a_{0}\in I,c_0 } |(\partial_{\tv}-a_{0}\Deltap )U_{\txv \lambda}(\txv)-c_0|
\lesssim   (\|u\|+\delta^{\alpha}[\Pi])^{\frac{\alpha}{\eta+\alpha} } (\tilde{M}+[u]_{\eta} )^{\frac{\eta}{\eta+\alpha}} \nonumber,   
\end{equation}
which implies \eqref{newIntPf1}.  If \eqref{e4} fails, then we find
\begin{align}
    \tilde{M}+[u]_{\eta} 
     \lesssim \big ( \|u\|+\delta^{\alpha}[\llhs] \big ) (\delta^{-1} \vee [\llhs]^{\frac{1}{\alpha}})^{\eta+\alpha}, \nonumber
\end{align}
so that choosing $R\approx \delta \wedge [\Pi]^{-\frac{1}{\alpha}}$, \eqref{e16b} leads to 
\begin{align}
\dist_\txv\lambda^{2-\eta}\inf_{a_{0}\in I,c_0\in\R } |(\partial_{\tv}-a_{0}\Deltap )U_{\txv \mop}(\mathbf{x})-c_0| 
 \lesssim \big ( \|u\|+\delta^{\alpha}[\llhs] \big ) (\delta^{-1} \vee [\llhs]^{\frac{1}{\alpha}})^{\eta},\nonumber
\end{align}
which again implies \eqref{newIntPf1}.

\end{proof}

\subsection{Application of Integration}

We will now use Proposition \ref{int lem} in order to transform the output \eqref{jq005} of Lemma \ref{MTPS1} into a bound on the solution $u$ to the renormalized equation \eqref{intro3}.

\begin{lemma}\label{jq006}
Let $u$ be a smooth solution to \eqref{intro3} and define $\nu$ as in \eqref{jc71}. For $\eta$ satisfying \eqref{jc40} and $ \delta \in (0,\frac{1}{2})$ it holds
\begin{align}
[u]_{\eta}+[\nu]_{\eta}' &\lesssim (\|u\|+\delta^{\alpha}[\Pi])^{\frac{\alpha}{\eta+\alpha} } \tilde{M} ^{\frac{\eta}{\eta+\alpha}}+   \big (\Inu{\nu}+\opnorm{f_{\eta}} )[\llhs]  \nonumber \\
&
+\big ( \|u\|+\delta^{\alpha}[\llhs] \big ) (\delta^{-1} \vee |[\llhs]^{\frac{1}{\alpha}})^{\eta}
\label{e178}
\end{align}
\end{lemma}
\begin{proof}
\newcounter{MTP} 
\refstepcounter{MTP} 
Recall that the jet in Proposition \ref{int lem} is required to be centered to first order in the sense that $U_\txv(\txv)=\nablap_{\yv}|_{\syv=\txv}U_\txv(\syv)=0$.  For the specific choice \eqref{def_UF}, we find that $U_{\txv}(\txv)=0$ since $\Pi_{\txv \beta}(\txv)=0$ for $|\beta|<2$, cf.~\eqref{model7a}.  Let us now explain how the choice of $\nu$ in \eqref{jc72} ensures 
$\nablap_{\yv}|_{\syv=\txv}U_\txv(\syv)=0$.  Indeed, note that
\begin{align}
\nabla U_{\txv}(\syv)=\nabla u(\syv)-f_{\eta}(\txv).\nabla \Pi_{\txv}(\syv)
=\nabla u(\syv)-f_{\eta}(\txv).\nabla(\id-\projm) \Pi_{\txv}(\syv)-\nu(\txv), 
\end{align}
where we recall that $\id-\projm$ is the projection of $\Tplus$ onto $\Tminus$, cf.\@ Section \ref{sec:ms} and we used  $\nablap \projm\llhs_{\txv}=\poly_x$, cf.\@ \eqref{jc03b} together with $f_\eta(\txv).\poly_x=\nu(\txv)$, cf.\@ \eqref{md0}.
Furthermore, since $\nablap\llhs_{\txv\beta}(\txv)=0$ for $|\beta|>1$, cf.\@ \eqref{model7a}, we may use the definition of $f_{\eta}$, cf.\@ \eqref{md0}, to write
\begin{align*}
\nu(\txv)=\nablap u(\txv)-f_{1}(\txv).\nablap \llhs_{\txv}(\txv) =\nablap u(\txv)-f_{\eta}(\txv). \nabla (\id-\projm)\llhs_{\txv}(\txv),
\end{align*}
and obtain that $\nablap_{\yv}|_{\syv=\txv}U_\txv(\syv)$ vanishes. 

\medskip

\medskip

{\sc Step} \arabic{MTP}.\refstepcounter{MTP}\label{MTPS3} In this step, we establish the three-point continuity condition: for all $\txv\in B_1(0)$, $\syv\in B_{\frac12\dist_{\txv}}(\txv)$ and $\rzv\in B_{\frac12\dist_{\txv}}(\syv)$ it holds
\begin{align}
\dist_{\txv}^{\eta}&\big |U_\txv(\rzv)-U_\txv(\syv)-U_\syv(\rzv)-\gamma_\txv(\syv)\cdot (\zv-\yv) \big | 
 \lesssim \sum_{\alpha \leq |\beta|\leq \eta-\alpha} \opnorm{f_{\eta}}[\Pi]d^{\eta-|\beta|}(\syv,\txv)d^{|\beta|}(\rzv,\syv),\label{mt2}
\end{align}
where $\gamma_\txv(\syv)$ is defined by
\begin{equation}
\gamma_\txv(\syv):=(f_{\eta}(\txv).\Gamma_{\syv\txv}-f_{\eta}(\syv).\id)\poly_x.
\end{equation}
To establish \eqref{mt2}, use \eqref{jc15} to write $\llhs_{\txv}(\rzv)-\llhs_{\txv}(\syv)=\Gamma_{\syv\txv}\llhs_{\syv}(\rzv)$, so that
\begin{align*}
&U_\txv(\rzv)-U_\txv(\syv)-U_\syv(\rzv) - \gamma_\txv(\syv)\cdot (\zv-\yv)\\
&=f_{\eta}(\txv).(\llhs_{\txv}(\rzv)-\llhs_{\txv}(\syv)) - f_{\eta}(\syv).\llhs_{\syv}(\rzv) - \gamma_\txv(\syv)\cdot (\zv-\yv)\\
&=f_{\eta}(\txv).\Gamma_{\syv\txv}\llhs_{\syv}(\rzv) - f_{\eta}(\syv).\llhs_{\syv}(\rzv) - \gamma_\txv(\syv)\cdot (\zv-\yv)\\
&=(f_{\eta}(\txv).\Gamma_{\syv\txv} - f_{\eta}(\syv).\id)(\id-\projm)\llhs_{\syv}(\rzv),
\end{align*}
where we have used $\projm\llhs_{\syv}(\rzv):=\poly_x\cdot(\zv-\yv)$, cf.\@ \eqref{jc03b}.
Since $\dist_\txv\le 2\dist_\syv$, we find by \eqref{md19}
\begin{align*}
\dist_{\txv}^{\eta}&\big |U_\txv(\rzv)-U_\txv(\syv)-U_\syv(\rzv)-\gamma_\txv(\syv)\cdot (\zv-\yv) \big | \\
& \lesssim \sum_{|\beta|=\alpha }^{\eta-\alpha} \opnorm{f_{\eta}} d^{\eta-|\beta|}(\syv,\txv)\dist_\syv^{\langle \beta\rangle \alpha}\|(\id-\projm)\primz_{\rzv\syv}\|_{\Ta{|\beta|}} \\
&\stackrel{\mathclap{\eqref{model7a}}}{\lesssim} \sum_{|\beta|=\alpha}^{\eta-\alpha}\opnorm{f_{\eta}}[\Pi]  d^{\eta-|\beta|}(\syv,\txv)d^{|\beta|}(\rzv,\syv),
\end{align*}
which yields \eqref{mt2}.

\medskip

{\sc Step} \arabic{MTP}.\refstepcounter{MTP}\label{MTPS4} We may apply Proposition \ref{int lem} to the jet $\syv \mapsto U_\txv(\syv)$, as we have verified \eqref{KS1New} in Lemma \ref{jq007} and \eqref{KS2} in Step \ref{MTPS3}, where the set $\Aa{}\cap (0,\eta]$ plays the role of $\Aa{}$. Moreover, $U_\txv(\txv)=\nablap_{\yv}|_{\syv=\txv}U_\txv(\syv)=0$ as observed at the beginning of the proof. 

\medskip

Observe that for $\txv\in B_1(0)$ and $\syv\in B_{\dist_{\txv}}(\txv)$ we have
\begin{align*}
 u(\syv)-u(\txv)-f_{\eta}(\txv).\primz_{\syv\txv} &\stackrel{\mathclap{\eqref{jc15}}}{=} u(\syv)-u(\txv)-f_{\eta}(\txv).\llhs_{\txv}(\syv) = U_\txv(\syv) 
\end{align*}
and thus by \eqref{model7a} and $d(\txv,\syv)\le \dist_{\txv}\le 1$ and the definition of $\Iu{U}$ in \eqref{jc79}
\begin{align*}
 \Iu{U}\lesssim \|u\| + [\llhs] + \Inu{\nu}[\llhs].
\end{align*}
Moreover, the definition of $\gamma_\txv(\syv)$ in Step \ref{MTPS3}, \eqref{model81} and $f_\eta.\poly_x=\nu$, cf.\@ \eqref{md0}, imply
\begin{align*}
\nu(\syv)-\nu(\txv) - f_\eta(\txv).\primo_{\syv\txv} =-\gamma_\txv(\syv).
\end{align*}
Therefore, as a result of the steps above, and taking into account that $\delta^{-1} \vee [\Pi]^{\frac{1}{\alpha}} \geq 1$, and also using Young's inequality, the output \eqref{KS34} implies \eqref{e178}.
\end{proof}

\subsection{Proof of the Main Theorem}

\begin{proof}
The main step is to prove that for $\eta$ satisfying \eqref{jc40}, there exists a universal $\epsilon>0$ such that if $\|u\| \leq \epsilon$, then  
\begin{equation}
    [u]_{\eta}\lesssim  (\|u\|+[\Pi]) (1 \vee [\llhs]^{\frac{\eta}{\alpha}} )\label{e180}.
\end{equation}
We will choose $\epsilon$ later in the proof, but we mention already that we will apply Lemma \ref{lem: positiveInt}, Lemma \ref{jq006} and the \autoref{lem1}  with $\delta:=\frac{1}{2} \wedge (\epsilon [\Pi]^{-1})^{\frac{1}{\alpha}}$, so we may assume throughout that
\begin{equation}
\|u\|+\delta^{\alpha}[\llhs] \leq 2\epsilon \leq 1.  \label{e179}
\end{equation}
We will make use of the following interpolation inequalities:
\begin{align}
[u]_{\kappa} \lesssim   [u]_{\eta}^{\frac{\kappa}{\eta}}, \quad
 \|\nu\|_{1}' \lesssim  [u]_{\eta}^{\frac{1}{\eta}} , \quad [\nu]_{\kappa}' \lesssim ([u]_{\eta} \vee [\nu]_{\eta}' )^{\frac{\kappa}{\eta} }+\delta^{-\kappa} \label{e184}
\end{align}
which follow from \eqref{e174}-\eqref{e175} and \eqref{e179}.  Indeed, for the first two inequalities, notice that we have dropped the second term on the RHS of each of \eqref{e174}-\eqref{e175}, which is possible since if the second term dominates the first term on the RHS \eqref{e174}-\eqref{e175}, then 
\begin{equation}
    [u]_{\eta} \lesssim (\|u\|+\delta^{\alpha}[\Pi] )(\delta^{-1} \vee [\Pi]^{\frac{1}{\alpha}})^{\eta} \lesssim (\|u\|+[\Pi]) (1 \vee [\llhs]^{\frac{\eta}{\alpha}}), \nonumber
\end{equation}
which is precisely \eqref{e180}. 
The third inequality in \eqref{e184} was already justified in the proof of \eqref{e182}.

\medskip
 
Our plan is to deduce \eqref{e180} from \eqref{e178}, so we need to control $\tilde{M}$.
We claim that 
\begin{equation}
    \tilde{M} \lesssim [u]_{\eta}^{\frac{\eta+\alpha}{\eta}}+([\nu]_{\eta}' )^{\frac{\eta+\alpha}{\eta}} +[\Pi]\delta^{-\eta }  \label{e181}.
\end{equation}
In light of \eqref{e184}, it holds
\begin{equation}
    \sup_{ \alpha \leq \kappa \leq \eta} [u]_{\kappa}^{\frac{\eta+\alpha}{\kappa}} 
 +(\|\nu\|_1')^{\eta+\alpha}+[\Pi](1 \vee [\Pi]^{\frac{\eta}{\alpha}} ) \lesssim [u]_{\eta}^{\frac{\eta+\alpha}{\eta}} +[\Pi]\delta^{-\eta}\nonumber. 
\end{equation}
Furthermore, by the \autoref{lem1} and Young's inequality it holds
\begin{align}
\opnorm{f_{\eta+\alpha}}[\Pi] 
\lesssim ([u]_{\eta}+[\nu]'_{\eta}+\delta^{-\eta} )[\Pi]
\lesssim [u]_{\eta}^{\frac{\eta+\alpha}{\eta}}+([\nu]_{\eta}' )^{\frac{\eta+\alpha}{\eta}}+[\Pi]^{\frac{\eta+\alpha}{\alpha}}+[\Pi]\delta^{-\eta}, \nonumber    
\end{align}
which completes the proof of \eqref{e181}.  

\medskip

We now consider the remaining contributions to \eqref{e178}.  Using again the \autoref{lem1}, Young's inequality, and \eqref{e184} we find that
\begin{align}
(\opnorm{f_{\eta}}+\|\nu\|_{1}')[\Pi] &\lesssim ([u]_{\eta-\alpha}+[\nu]_{\eta-\alpha}'+\delta^{-(\eta-\alpha)}+[u]_\eta^{\frac{1}{\eta}} )[\Pi]  \nonumber \\
&\lesssim \big ( [u]_{\eta}+[\nu]_{\eta-\alpha}' \big )^{\frac{\eta-\alpha}{\eta}}[\Pi]+\epsilon^{\eta}[u]_{\eta}+\epsilon^{-\frac{\eta-1}{\eta}}[\Pi]^{\frac{\eta}{\eta-1}}+\delta^{-\eta} [\Pi]  \nonumber\\
& \lesssim \epsilon^{\frac{\eta}{\eta-\alpha}}([u]_{\eta}+[\nu]_{\eta}')+\delta^{-\eta} [\Pi]\nonumber.
\end{align}
We now insert the above estimates into \eqref{e178}, taking into account \eqref{e179} and Young's inequality to obtain
\begin{align}
[u]_{\eta}+[\nu]_{\eta}' 
&\lesssim (\epsilon^{\frac{\alpha}{\eta+\alpha} }+\epsilon^{\frac{\eta}{\eta-\alpha} } )  ( [u]_{\eta}+[\nu]_{\eta}'  )+[\Pi]\delta^{-\eta}+( \|u\|+\delta^{\alpha}[\llhs] \big ) \delta^{-\eta}  \nonumber \\
& \lesssim  (\epsilon^{\frac{\alpha}{\eta+\alpha} }+\epsilon^{\frac{\eta}{\eta-\alpha} } )  ( [u]_{\eta}+[\nu]_{\eta}'  )+(\|u\|+[\Pi])(1 \vee[\Pi]^{\frac{\eta}{\alpha}} ) \nonumber.
\end{align}
Choosing $\epsilon$ sufficiently small (and universal), we obtain \eqref{e180}.

\medskip

We now combine \eqref{e180} with interpolation to conclude the main estimate \eqref{jc63}.  Namely, using \eqref{e174} with $\delta=\frac{1}{2}$, we obtain
\begin{align}
[u]_{\kappa} &\lesssim [u]_{\eta}^{\frac{\kappa}{\eta}}\big ( \|u\|+[\Pi] \big )^{1-\frac{\alpha}{\eta}} +\big ( \|u\|+[\Pi] \big )(1 \vee [\Pi]^{\frac{\kappa}{\alpha}}) \nonumber \\
& \lesssim \big ( \|u\|+[\Pi] \big )(1 \vee [\Pi]^{\frac{\kappa}{\alpha}}) \nonumber.
\end{align}
If $\txv,\syv\in B_{1-r}(0)$ with $r\in(0,1)$, then $\dist_\txv\ge r$. Hence, if $\syv\in B_{\dist_\txv}(\txv)$, the above estimate with $\kappa=\alpha$ implies \eqref{jc63}, and if $\syv\notin B_{\dist_\txv}(\txv)$, then $d(\syv,\txv)\ge \dist_\txv\ge r$ and hence
\begin{align*}
  r^\alpha |u(\syv)-u(\txv)|\lesssim r^\alpha \|u\| \lesssim  \|u\| d^\alpha(\syv,\txv),
\end{align*}
which is also contained in the right-hand side of \eqref{jc63}.
We conclude by arguing the more general claim \eqref{jc72} by essentially the same argument.  No changes are required if $\kappa \leq 1$ and if $\kappa \in (1,2)$ we use that by \eqref{e175} (with $\kappa$ playing the role of $\eta$) and Young's inequality 
\begin{equation}
\|\nu\|_{1}' \lesssim [u]_{\kappa}+(\|u\|+[\Pi] )(1 \vee [\Pi]^{\frac{1}{\alpha}} ), \nonumber
\end{equation}
which implies that
\begin{equation}
r^{\kappa}|\nu(x)| \leq r |\nu(x)| \lesssim \big ( \|u\|+[\Pi] \big )(1 \vee [\Pi]^{\frac{\kappa}{\alpha}}) \nonumber.
\end{equation}

\end{proof}

\section{Acknowledgements}
The third author is very grateful to Joscha Diehl for many useful conversations on rough paths and regularity structures during his tenure at the MPI in Leipzig.  He also thanks Alexandra Blessing and Benjamin Seeger for pointers to the literature on quasi-linear SPDE with colored noise.

\bibliographystyle{plain}

\end{document}